\let\svthefootnote\thefootnote
\newcommand\blankfootnote[1]{%
  \let\thefootnote\relax\footnotetext{#1}%
  \let\thefootnote\svthefootnote%
}
\def\no{\noindent}
\renewcommand{\ge}{\geqslant}
\renewcommand{\le}{\leqslant}
\def\itemn#1{\item[\hspace{0.6mm} {\rm (#1)}]}
\def\itemm#1{\item[\hspace{10mm} {\rm (#1)}]}
\def\To{\Rightarrow}
\def\too{\longrightarrow}
\def\tooo{\relbar\joinrel\relbar\joinrel\longrightarrow}
\def\into{\hookrightarrow}
\def\isomto{\xrightarrow{\,\smash{\raisebox{-0.5ex}{\ensuremath{\scriptstyle\sim}}}\,}}
\mathchardef\ordinarycolon\mathcode`\:
\newcommand{\spref}[1]{\href{http://stacks.math.columbia.edu/tag/#1}{Tag~#1}}
\newtheorem{counter}[subsubsection]{$\!\!$}
\newenvironment{definition}
{\begin{counter} \rm {\bf Definition.}}{\end{counter}}
\newenvironment{definitions}
{\begin{counter} \rm {\bf Definitions.}}{\end{counter}}
\newenvironment{prop}{\begin{counter} {\bf Proposition.}}{\end{counter}}
\newenvironment{lemma}{\begin{counter} {\bf Lemma.}}{\end{counter}}
\newenvironment{corollary}{\begin{counter} {\bf Corollary.}}{\end{counter}}
\newenvironment{theorem}{\begin{counter} {\bf Theorem.}}{\end{counter}}
\newenvironment{remark}{\begin{counter} \rm {\bf Remark.}}{\end{counter}}
\newenvironment{remarks}{\begin{counter} \rm {\bf Remarks.}}{\end{counter}}
\newenvironment{exam}{\begin{counter} \rm {\bf Example.}}{\end{counter}}
\newenvironment{examples}{\begin{counter} \rm {\bf Examples.}}{\end{counter}}
\newenvironment{thematic-item}{\begin{counter} \rm}{\end{counter}}
\newenvironment{proof}{{\flushleft \bf Proof~:}}{\hfill $\square$ \vspace{5mm}}
\newtheorem{subcounter}[subsection]{$\!\!$}
\newenvironment{definition*}{\begin{subcounter} \rm {\bf Definition.}}{\end{subcounter}}
\newenvironment{defis*}{\begin{subcounter} \rm {\bf Definitions.}}{\end{subcounter}}
\newenvironment{nota*}{\begin{subcounter} \rm {\bf Notation.}}{\end{subcounter}}
\newenvironment{situ*}{\begin{subcounter} \rm {\bf Situation.}}{\end{subcounter}}
\newenvironment{prop*}{\begin{subcounter} {\bf Proposition.}}{\end{subcounter}}
\newenvironment{lemma*}{\begin{subcounter} {\bf Lemma.}}{\end{subcounter}}
\newenvironment{fact*}{\begin{subcounter} {\bf Fact.}}{\end{subcounter}}
\newenvironment{coro*}{\begin{subcounter} {\bf Corollary.}}{\end{subcounter}}
\newenvironment{conj*}{\begin{subcounter} {\bf Conjecture.}}{\end{subcounter}}
\newenvironment{theorem*}{\begin{subcounter} {\bf Theorem.}}{\end{subcounter}}
\newenvironment{quot*}{\begin{subcounter} {\bf Theorem \ }}{\end{subcounter}}
\newenvironment{cons*}{\begin{subcounter} \rm {\bf Construction.}}{\end{subcounter}}
\newenvironment{remark*}{\begin{subcounter} \rm {\bf Remark.}}{\end{subcounter}}
\newenvironment{remarks*}{\begin{subcounter} \rm {\bf Remarks.}}{\end{subcounter}}
\newenvironment{exam*}{\begin{subcounter} \rm {\bf Example.}}{\end{subcounter}}
\newenvironment{exams*}{\begin{subcounter} \rm {\bf Examples.}}{\end{subcounter}}
\newenvironment{ques*}{\begin{subcounter} \rm {\bf Question.}}{\end{subcounter}}
\newenvironment{cont*}{\begin{subcounter} \rm {\bf Counter-examples.}}{\end{subcounter}}
\newenvironment{todo*}{\begin{counter} \rm {\bf TODO.}}{\end{counter}}
\newenvironment{noth*}{\begin{subcounter} \rm}{\end{subcounter}}
\newenvironment{rien*}{\begin{subcounter} \rm}{\end{subcounter}}
\DeclareMathOperator{\Aut}{Aut}
\DeclareMathOperator{\Hom}{Hom}
\DeclareMathOperator{\Isom}{Isom}
\DeclareMathOperator{\AlgSp}{AlgSp}
\DeclareMathOperator{\Pic}{Pic}
\DeclareMathOperator{\QCoh}{QCoh}
\DeclareMathOperator{\id}{id}
\DeclareMathOperator{\Spec}{Spec}
\DeclareMathOperator{\im}{im}
\DeclareMathOperator{\fppf}{fppf}
\DeclareMathOperator{\pr}{pr}
\DeclareMathOperator{\Stab}{Stab}
\DeclareMathOperator{\ev}{ev}
\DeclareMathOperator{\interp}{int}
\DeclareMathOperator{\card}{card}
\DeclareMathOperator{\Flat}{Flat}
\DeclareMathOperator{\qa}{qa}
\DeclareMathOperator{\post}{post}
\DeclareMathOperator{\pre}{pre}
\def\cA{{\mathcal A}} \def\cB{{\mathcal B}} \def\cC{{\mathcal C}}
  \def\cF{{\mathcal F}}
\def\cG{{\mathcal G}} \def\cH{{\mathcal H}} \def\cI{{\mathcal I}}
  \def\cO{{\mathcal O}}
  \def\cX{{\mathcal X}}
  \def\cX{{\mathcal X}}
\renewcommand\AA{\mathbb{A}}
\newcommand\GG{\mathbb{G}}
 \newcommand\NN{\mathbb{N}}
 \newcommand\ZZ{\mathbb{Z}}
 \def\sB{\mathscr{B}} 
\def\sG{\mathscr{G}}  
  \def\sL{\mathscr{L}}
  \def\sO{\mathscr{O}}
\def\sP{\mathscr{P}}  \def\sR{\mathscr{R}}
 \def\sT{\mathscr{T}} \def\sU{\mathscr{U}}
  \def\sX{\mathscr{X}}
\newlength{\myarrowsize} 
    \newlength{\myoldlinewidth}
\begin{document}

\begin{center}
{\Large \bf The complexity of a flat groupoid}
\end{center}

\begin{center}
Matthieu Romagny, David Rydh\footnote{The second author was supported by the Swedish Research Council 2015-05554 and the G\"oran Gustafsson foundation.} and Gabriel Zalamansky
\end{center}

\begin{center}
{\sc Abstract}
\end{center}

\begin{minipage}{15cm}
Grothendieck proved that any finite epimorphism of noetherian
schemes factors into a finite sequence of effective epimorphisms.
We define the complexity of a flat groupoid $R\rightrightarrows X$ with finite
stabilizer to be the length of the canonical sequence of the finite
map $R\to X\times_{X/R} X$, where $X/R$ is the Keel--Mori geometric quotient. For groupoids of complexity at most 1, we prove a
theorem of descent along the quotient $X\to X/R$ and a theorem
on the existence of the quotient of a groupoid by a normal subgroupoid. We expect that
the complexity could play an important role in the finer study
of quotients by groupoids.
\end{minipage}

\blankfootnote{\hspace{-6.8mm}
Date : April 26, 2018 \\
Keywords~: Groupoids, group schemes, quotients, algebraic spaces, effective epimorphisms, descent \\
Mathematics Subject Classification: 14A20,14L15,14L30}


\tableofcontents

\section{Introduction}

{\bf Motivation.}
Let $X$ be a scheme endowed with an action of a group scheme
$G$ such that there exists a quotient $\pi:X\to Y=X/G$.
Consider the category $\cC(X)$ of vector bundles on~$X$.
In this paper,
we give new examples where one can describe
which $G$-linearized bundles on $X$ that descend to bundles
on~$Y$, and similarly for other fibered categories $\cC$.
More precisely, let
$\cC(G,X)$ be the category of vector bundles
endowed with a $G$-linearization.
Let $\cC(G,X)'$ be the subcategory of $G$-linearized bundles for
which the action of the stabilizers of geometric points is
trivial. It is not hard to see that for any vector bundle
$\cG\in\cC(Y)$, the pullback $\cF=\pi^*\cG$ is naturally an
object of $\cC(G,X)'$. The question is:

\bigskip

Let $G\times X\to X$ be a group scheme
action as above, with quotient $\pi:X\to Y=X/G$. When is
the pullback $\pi^*:\cC(Y)\to \cC(G,X)'$ an equivalence?

\bigskip

The correct framework for this type of question is that of
algebraic spaces (which generalize schemes) and groupoids
(which generalize group actions). That this is so was
demonstrated twenty years ago by Keel and Mori who settled
the question of existence of quotients for actions with
finite stabilizer in the paper~\cite{KM97}. The main point
is that
groupoids allow reduction and d\'evissage in a much more
flexible way than group actions. Moreover, groupoids include
examples of interest like foliations in characteristic $p$,
and inseparable equivalence relations as in work of Rudakov
and Shafarevich~\cite{RS76} and Ekedahl~\cite{Ek88},
which we will return to in the end of this introduction. We
emphasize that our results are equally interesting in the
restricted case of group actions. So in the sequel we let
\begin{trivlist}
\itemn{1} $R\rightrightarrows X$ be a flat locally finitely presented groupoid of
algebraic spaces,
\itemn{2} $\cC\to \AlgSp$ be a category fibered over the category
of algebraic spaces,
\itemn{3} $\cC(R,X)$ be the category of objects of $\cC(X)$
equipped with $R$-linearizations (see~\ref{ss:equivariant objects} for a precise definition), and
\itemn{4} $\cC(R,X)'\subset \cC(R,X)$ be the full subcategory of objects with trivial
geometric stabilizer actions.
\end{trivlist}
Since $R$-linearized objects on $X$ are the same as objects
on the algebraic stack $\sX=[X/R]$, the language of stacks
is an alternative which is also used on that matter.

\smallskip

\noindent {\bf Known results.}
When $X\to Y$ is a {\em tame} quotient, which means that
the geometric stabilizers of $R\rightrightarrows X$ are linearly
reductive finite group schemes, and~$\cC$ is either
the category of line bundles, or finite \'etale covers,
or torsors under a fixed linearly reductive finite group
scheme, Olsson showed that $\pi^*:\cC(Y)\to \cC(G,X)'$
is an equivalence \cite[Props.~6.1, 6.2, 6.4]{Ol12}. When
$X\to Y$ is a {\em good} quotient and $\cC$ is the category
of vector bundles, Alper showed that $\pi^*$ is an
equivalence~\cite[Thm.~10.3]{Al13}. Results for
good quotients and other categories~$\cC$ will be presented
in an upcoming paper by the second author.

\smallskip

\noindent {\bf The complexity.}
We wish to find examples that go beyond these cases, e.g.,
wild actions in characteristic $p$. In this new setting
the map $\pi^*:\cC(Y)\to \cC(G,X)'$ fails to be an isomorphism
in general; e.g. if $\cC$ is the category of line bundles
and $X=\Spec( k[\epsilon]/(\epsilon^2) )$ with trivial action of
$G=\ZZ/p\ZZ$, the $G$-line bundle $L$ generated by a section
$x$ with action $x\mapsto (1+ \epsilon)x$ is not trivial.
For this, we introduce a new invariant of flat groupoids which
we call the {\em complexity}. (This is not to be confused with
the complexity as defined by Vinberg~\cite{Vi86} in another
context, namely the minimal codimension of a Borel orbit
in a variety acted on by a connected reductive group.)
We fix our attention on the morphism
$j_Y:R\to X\times_Y X$ which is finite and surjective when the groupoid
has finite inertia.
The complexity of the groupoid is controlled by the
epimorphicity properties of this map. In order to quantify
this, we use a result of Grothendieck to the effect that a
finite epimorphism of noetherian schemes factors as
a finite sequence of {\em effective} epimorphisms. We prove
in~\ref{prop:canonical_factorization} that there is a canonical
such sequence, and we define the complexity of $R\rightrightarrows X$ as the
length of the canonical sequence of~$j_Y$.

\smallskip

\noindent {\bf Main new results.}
The complexity is equal to $0$ when $j_Y$ is an isomorphism,
which means that the groupoid acts freely; in this case most
questions involving $R\rightrightarrows X$ are easily answered.
The next case in difficulty is the case of complexity $1$.
In order to obtain results in this case, we introduce the
stabilizer $\Sigma$ of $R\rightrightarrows X$, which is the preimage of
the diagonal under $R\to X\times X$. It refines the information
given by the collection of stabilizers of geometric points in
that it accounts for higher ramification. We let
$\cC(R,X)^\Sigma\subset\cC(R,X)'$ be the subcategory
of $R$-linearized objects for which the action of $\Sigma$ is
trivial. In our main result
we have to assume that the quotient map is flat; the payoff
is that we can handle very general categories $\cC$.

\bigskip

{\bf Theorem~\ref{theorem:descent_along_quotient}.}
{\em Let $R\rightrightarrows X$ be a flat, locally finitely presented groupoid space
with finite stabilizer $\Sigma\to X$ and complexity at most $1$.
Assume that the quotient $\pi:X\to Y=X/R$ is flat (resp.\ flat
and locally of finite presentation). Let $\cC\to\AlgSp$ be a stack in
categories for the fpqc topology (resp.\ for the fppf topology).
\begin{trivlist}
\itemn{1} If the sheaves of homomorphisms $\cH om_{\cC}(\cF,\cG)$
have diagonals which are representable by algebraic spaces, then
the pullback functor $\pi^*:\cC(Y)\to \cC(R,X)^{\Sigma}$ is fully faithful.
\itemn{2} If the sheaves of isomorphisms $\cI som_{\cC}(\cF,\cG)$ 
are representable by algebraic spaces, then the pullback functor
$\pi^*:\cC(Y)\to \cC(R,X)^{\Sigma}$ is essentially surjective.
\end{trivlist}
In particular if $\cC$ is a stack in groupoids with representable
diagonal, the functor $\pi^*$ is an equivalence.}

\bigskip

This applies to stacks whose diagonal has some representability
properties. The next theorem applies to a stack which does not
enjoy such a property.

\bigskip

{\bf Theorem~\ref{theorem:descent_of_flat}.}
{\em Let $\cC\to\AlgSp$ be the fppf stack in categories whose objects over $X$
are flat morphisms of algebraic spaces $X'\to X$.
Let $R\rightrightarrows X$ be a flat, locally finitely presented groupoid space
with finite stabilizer $\Sigma\to X$ and complexity at most $1$.
Assume that the quotient $\pi:X\to Y=X/R$ is flat and locally
of finite presentation. Then the functor
$\pi^*:\cC(Y)\to \cC(R,X)^{\Sigma}$ is an equivalence.}

\bigskip

We give examples of groupoids satisfying the assumptions of
these theorems in
section~\ref{examples}. These include groupoids acting on
smooth schemes in such a way that the stabilizers are symmetric
groups acting by permutation of local coordinates. Other
examples are given by groupoids acting on curves in positive
characteristic; this is especially interesting in
characteristic~2. The two theorems above can fail when $\pi$ is not
flat and the stabilizer groups are not tame, see section~\ref{non-flat-example}.
We do not know if the assumption that the complexity is at most one is
necessary.

We give an application to the existence
of quotients of groupoids by normal subgroupoids. This is
interesting when applying d\'evissage arguments, as for
instance in \cite[\S~7]{KM97}. This question is also natural
from the point of view of understanding the internal structure
of the category of groupoids. The basic observation is this:
if $R\rightrightarrows X$ is a groupoid $P\subset R\rightrightarrows X$ is a normal
flat subgroupoid, the actions of $P$ on $R$ by precomposition
and postcomposition are free, but the simultaneous action
of $P\times P$ is {\em not} free. For groupoids $R=G\times X$
given by group actions, it is nevertheless easy to make
$G/H$ act on $X/H$, providing a quotient groupoid
$G/H\times X/H\rightrightarrows X/H$. However for general groupoids,
constructing a composition law on the quotient
$P\backslash R/P$ making it a groupoid acting on $X/P$
is much more complicated. In
section~\ref{section:quotient_by_gpd} we review some cases
where this is possible.
For subgroupoids of complexity 1
with flat quotient, we obtain a satisfying answer.

\bigskip

{\bf Theorem~\ref{theorem:quotient_by_subgroupoid}.}
{\em Let $R\rightrightarrows X$ be a flat, locally finitely presented groupoid
of algebraic spaces.
Let $P\rightrightarrows X$ be a flat, locally finitely presented normal
subgroupoid of $R$ with finite stabilizer $\Sigma_P\to X$
and complexity at most $1$. Assume that the quotient
$X\to Y=X/P$ is flat and locally finitely presented. Then there is a quotient
groupoid $Q\rightrightarrows Y$ which is flat and locally finitely presented,
with $Q=P\backslash R/P$. Moreover, the morphisms
$R\to Q$ and $R\times_X R\to Q\times_YQ$ are flat and locally finitely presented.}

\bigskip

\no {\bf Directions of further work.}
The natural question now is to extend these results
to the case of groupoids of complexity 2. This would most
likely shed some light on the case of arbitrary complexity.
For the moment, we have no idea of what the correct
substitute for $\cC(R,X)^{\Sigma}$ should be in the general
context.

The application we envision for these results is to the
study of finite flat covers of algebraic varieties,
typically over a field $k$ of characteristic $p$. More
precisely, we expect our theorems to be useful for
understanding how purely inseparable morphisms of algebraic
$k$-varieties $f:V\to W$ can be factorized. An important
instance is when $f$ is an iterate of the Frobenius morphism
of~$V$. We note that when $V$ is smooth, $f$ will be
flat. Thus the assumption of flatness of the quotient
map in our results is not too annoying; we give some more
comment on this point in Remark~\ref{remark:philosophy}.

\smallskip

\noindent {\bf Organization of the article}. As we said already,
we work in the setting of groupoids in algebraic spaces.
(The relevance of this choice in questions of quotients in
Algebraic Geometry is well explained in the paper \cite{Li05}
which we recommend as an excellent contextual reading.)
This leads us to start in section~\ref{section:finite_epis}
with some preparations on finite epimorphisms of spaces.
In particular, we give sufficient conditions for an epimorphism
of algebraic spaces to be effective, and we prove a precise
form of Grothendieck's factorization of finite epimorphisms
into finite effective epimorphisms. In
section~\ref{section:groupoids} we recall
the basic vocabulary of groupoids, we define the complexity,
and we present several examples. Finally in
section~\ref{section:main theorems} we prove the main
results of the paper, presented above.

\smallskip

\noindent {\bf Acknowledgements.}
This article is derived from the third author's Ph.D.\ thesis.
We thank user27920 on MathOverflow
for help in the proof of
Proposition~\ref{prop:canonical_factorization} before we learned
this is in \cite{SGA6}. We thank Alessandro Chiodo for discussions
related to Theorem~\ref{theorem:descent_along_quotient}. We
thank C\'edric Bonnaf\'e for his interest and for discussions
around actions of groups generated by reflections.

\section{Finite epimorphisms} \label{section:finite_epis}

This section of preliminary nature contains material on finite
epimorphisms of algebraic spaces.
The notion of epimorphism turns out to be a little more subtle
in the category of algebraic spaces than its counterpart in the
category of schemes, due to the lack of the locally ringed space
description. The same is true for the notion of effective epimorphism.
In order to have a better understanding of the situation, we will
give some manageable conditions that ensure that a map of algebraic
spaces is an epimorphism, or an effective epimorphism.
The main result is Theorem~\ref{theo:eff_epi_of_spaces}, but
for the convenience of the reader we will indicate here its
main consequence needed in the sequel. Recall the following two
statements in the easy scheme case:

\begin{prop}
Let $f:S'\to S$ be a qcqs surjective morphism of schemes.
Write $\cA(S')=f_*\cO_{S'}$.
Then the following are equivalent:
\begin{trivlist}
\itemn{1} $f$ is schematically dominant, that is,
$\cA(S)\to \cA(S')$ is injective;
\itemn{2} $f$ is an epimorphism in the category of schemes.
\end{trivlist}
\end{prop}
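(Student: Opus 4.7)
The plan is to prove $(1) \Rightarrow (2)$ by a direct sheaf-theoretic chase, and $(2) \Rightarrow (1)$ by constructing an explicit ``double'' scheme $T$ whose two natural maps from $S$ agree after composition with $f$ but differ precisely when schematic dominance fails.

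For $(1) \Rightarrow (2)$, given $g_1, g_2 : S \to T$ with $g_1 \circ f = g_2 \circ f$, the surjectivity of $f$ on underlying points forces $g_1$ and $g_2$ to coincide as continuous maps of underlying spaces; call this common map~$g$. On structure sheaves, the morphisms $g_i^\sharp : g^{-1}\cO_T \to \cO_S$ become equal after postcomposition with the injection $\cO_S \hookrightarrow f_*\cO_{S'}$, because $(g_1 \circ f)^\sharp = (g_2 \circ f)^\sharp$. Injectivity of $\cA(S)\to\cA(S')$ then yields $g_1^\sharp = g_2^\sharp$.

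For $(2) \Rightarrow (1)$, set $\cI := \ker(\cO_S \to f_*\cO_{S'})$, which is a quasi-coherent ideal of $\cO_S$ because $f$ is qcqs. Form the sheaf of rings $\cC := \cO_S \times_{\cO_S/\cI} \cO_S$, regarded as an $\cO_S$-algebra via the diagonal, and let $T := \underline{\Spec}_S \cC$. The two ring projections $\pi_1, \pi_2 : \cC \to \cO_S$ are $\cO_S$-algebra maps and define two sections $\sigma_1, \sigma_2 : S \to T$ of the structure morphism. Every local section of $\cC$ is a pair $(x,y)$ with $x-y \in \cI$, and $\cI$ maps to zero in $f_*\cO_{S'}$, so the composites $\sigma_i \circ f$ coincide. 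The epi hypothesis then gives $\sigma_1 = \sigma_2$, hence $\pi_1 = \pi_2$, which forces $x=y$ for every section of $\cC$, i.e.\ $\cI = 0$.

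The only technical point requiring care is the quasi-coherence of $\cC$: on each affine open $\Spec A \subset S$ with $\cI(U) = I_0$, one must verify that $A \mapsto A \times_{A/I_0} A$ commutes with localization $A \to A_h$. This follows from flatness of localization, which preserves finite limits of modules. Once this is established, $T$ is a bona fide affine scheme over $S$, and the argument concludes. I expect no deep obstacle here: the construction is simply the global avatar of the pushout formula $\Spec A \sqcup_{\Spec B} \Spec A = \Spec(A \times_B A)$ in the affine category, and the whole argument may be summarized by saying that $f$ is an epi if and only if the pushout $S \sqcup_{S'} S$ is trivial.
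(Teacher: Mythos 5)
Your proof is correct: both implications are sound, the qcqs hypothesis is used exactly where it is needed (quasi-coherence of $f_*\cO_{S'}$, hence of $\cI$ and of $\cC$), and surjectivity is used exactly where it is needed (to identify $g_1$ and $g_2$ topologically in $(1)\Rightarrow(2)$; note that schematic dominance alone would not suffice, as the open immersion $\AA^1\setminus\{0\}\into\AA^1$ shows). The paper itself states this proposition without proof as a recollection; the proof it does give of the closely related Lemma~\ref{lemma:epi_of_schemes} rests on the same idea as your second implication, namely gluing two copies of $S$ along the subscheme through which $f$ factors, but there the gluing along a closed subscheme is obtained by invoking Ferrand's pincement theorem~\cite{Fe03}. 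Your construction $T=\Spec_S(\cO_S\times_{\cO_S/\cI}\cO_S)$ is precisely that double in the special case $Z=V(\cI)$, realized directly as the relative Spec of a quasi-coherent $\cO_S$-algebra; this buys self-containedness (no appeal to the general existence of pushouts of closed immersions) at the cost of applying only to the surjective case, whereas the paper's lemma must also treat factorizations through open subschemes to characterize epimorphisms in general. One small caveat on your closing remark: your $T$ need not be the categorical pushout $S\sqcup_{S'}S$ in the category of all schemes, but this is irrelevant to the argument, which only requires that $T$ be a scheme receiving two maps from $S$ that are equalized by $f$ and coincide iff $\cI=0$ --- which you verify directly.
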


\begin{prop}
Let $f:S'\to S$ be a qcqs submersive morphism of schemes.
Write $S''=S'\times_S S'$.
Then the following are equivalent:
\begin{trivlist}
\itemn{1} $\cA(S)\to\cA(S')\rightrightarrows \cA(S'')$ is exact;
\itemn{2} $f$ is an effective epimorphism in the category of schemes.
\end{trivlist}
\end{prop}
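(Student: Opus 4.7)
The plan is to unfold both conditions in terms of morphisms out of $S$, using submersivity of $f$ to handle the underlying topological spaces and condition (1) to handle the structure sheaves. In effect, one checks that the coequalizer of $p_1,p_2$ in the category of (locally) ringed spaces matches $S$, and then invokes that $S$ is already a scheme.

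For $(1) \Rightarrow (2)$, given any scheme $T$ and any morphism $g: S' \to T$ with $g \circ p_1 = g \circ p_2$, I construct a unique factorization $h: S \to T$ with $g = h \circ f$ in two stages. Topologically, since $f$ is submersive and $|g|$ coequalizes $|p_1|, |p_2|$, there is a unique continuous map $h: |S| \to |T|$ with $|g| = h \circ |f|$. For structure sheaves, the morphism $g^{\#} : \cO_T \to g_*\cO_{S'} = h_* f_* \cO_{S'}$ has equal composites with the two arrows $h_* f_* \cO_{S'} \rightrightarrows h_* f_* \cO_{S''}$ induced by $g p_1 = g p_2$. Since $h_*$ is left exact and condition (1) expresses $\cO_S$ as the equalizer of $f_* \cO_{S'} \rightrightarrows f_* \cO_{S''}$, the map $g^{\#}$ passes uniquely through $h_* \cO_S$, producing $h^{\#}: \cO_T \to h_*\cO_S$. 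To see that this makes $h$ a morphism of schemes, I check locality of stalks: for $s \in S$ and any preimage $s' \in S'$, the composite $f^{\#}_{s'} \circ h^{\#}_s = g^{\#}_{s'}$ is a local homomorphism and $f^{\#}_{s'}$ is local (as $f$ is a morphism of schemes), so $h^{\#}_s$ must be local as well. Uniqueness is forced by the quotient topology on $|S|$ and by the injection $\cO_S \hookrightarrow f_* \cO_{S'}$ implicit in (1).

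For $(2) \Rightarrow (1)$, I apply the defining universal property of effective epimorphism against $T = \mathbb{A}^1_{\ZZ}$: this yields the equalizer $\Gamma(S, \cO_S) = \mathrm{eq}\bigl(\Gamma(S', \cO_{S'}) \rightrightarrows \Gamma(S'', \cO_{S''})\bigr)$ of global sections. To upgrade this to an equalizer of sheaves on $S$, one needs the analogous equality on every open $U \subset S$, which amounts to $f^{-1}(U) \to U$ itself being an effective epimorphism. The latter is the standard fact that base change of an effective epimorphism of schemes along an open immersion is still an effective epimorphism; one checks it by observing that a morphism $f^{-1}(U) \to T$ coequalizing on $(f^{-1}U)\times_U (f^{-1}U)$ corresponds to a morphism $f^{-1}(U) \to T \times U$ over $U$, and applying the effective epi property of $f$ relative to the open subscheme $T \times U \subset T \times S$.

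The step I expect to be most delicate is the locality verification for $h^{\#}$ in $(1) \Rightarrow (2)$: submersivity alone does not imply that the descended ring homomorphism is local, and the argument hinges on lifting every point of $S$ to a preimage in $S'$ where locality comes for free from the fact that $f$ and $g$ are morphisms of schemes. The reduction of effective epimorphism to opens in $(2) \Rightarrow (1)$ is routine once one unfolds the universal property, but is essential for turning an equalizer of global sections into an equalizer of sheaves.
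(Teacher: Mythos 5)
Your direction (1) $\Rightarrow$ (2) is essentially correct, and it is the classical argument behind the reference the paper itself gives for this implication (\cite[Exp.~VIII, Prop.~5.1]{SGA1}): submersivity together with the surjectivity of $|S''|\to |S'|\times_{|S|}|S'|$ produces the continuous map $h$, left exactness of $h_*$ applied to the exact sequence in (1) produces $h^{\#}$, and your stalkwise locality check (using a preimage $s'$ of $s$, which exists since $f$ is surjective) is fine. Note that this direction uses neither quasi-compactness nor quasi-separatedness.

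The gap is in (2) $\Rightarrow$ (1). Your reduction needs the claim that for every open $U\subset S$ the restriction $f^{-1}(U)\to U$ is again an effective epimorphism, and your justification of this claim does not work: given $g:f^{-1}(U)\to T$ coequalizing the two projections, the morphism $(g,f|_{f^{-1}(U)}):f^{-1}(U)\to T\times U\subset T\times S$ is still only defined on the open subscheme $f^{-1}(U)\subset S'$, whereas the universal property of the effective epimorphism $f$ can only be invoked for morphisms defined on all of $S'$; there is no ``effective epi property relative to an open subscheme'', and there is in general no way to extend $g$ or its graph across $S'\setminus f^{-1}(U)$. Effectivity is notoriously not stable under base change (this is why one introduces universal/uniform effective epimorphisms), and its Zariski-locality on $S$ is, in this paper, available only \emph{a posteriori}, as a consequence of the very equivalence you are proving, because exactness of a sequence of quasi-coherent sheaves is a local condition (cf.\ the remark following Corollary~\ref{coro:cases_of_eff_descent}). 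The paper's proof of this implication (Lemma~\ref{lemma:effective_epi_and_functions}, stated for qcqs morphisms of algebraic spaces, hence covering the scheme case) avoids localization altogether: qcqs guarantees that $\cA(S')$ and $\cA(S'')$ are quasi-coherent, one sets $\cB=\ker\bigl(\cA(S')\rightrightarrows\cA(S'')\bigr)$ and $T=\Spec_S(\cB)$, and one applies effectivity of $f$ to the single canonical $S$-morphism $g:S'\to T$ (which coequalizes $S''\rightrightarrows S'$ because $T$ is affine over $S$); the resulting factorization $S\to T$ forces $\cO_S\to\cB$ to be an isomorphism. Since $T$ is relatively affine, this one test object encodes the equalizer condition on every open of $S$ simultaneously -- exactly the point your $\AA^1_{\ZZ}$-plus-localization strategy misses, and also the place where the qcqs hypothesis is genuinely used.
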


The main results we shall need are the following:

\begin{prop}
(Lemma~\ref{lemm: e-submersion is epi})
Let $f:S'\to S$ be a qcqs morphism of algebraic spaces which is
submersive after every \'etale base change on $S$.
Then the following are equivalent:
\begin{trivlist}
\itemn{1} $f$ is schematically dominant, that is, $\cA(S)\to\cA(S')$ is injective;
\itemn{2} $f$ is an epimorphism in the category of algebraic spaces.
\end{trivlist}
\end{prop}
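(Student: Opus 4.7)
The plan has two parts corresponding to the two implications.

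For $(2) \Rightarrow (1)$, any element $a \in \cA(S) = \Hom(S, \AA^1)$ in the kernel of $\cA(S) \to \cA(S')$ produces two morphisms $a, 0 \colon S \rightrightarrows \AA^1$ equalized by $f$, so the epimorphism property forces $a = 0$.

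For $(1) \Rightarrow (2)$, let $g_1, g_2 \colon S \to T$ satisfy $g_1 f = g_2 f$. I would form the equalizer
$$h \colon E \,:=\, S \times_{(g_1, g_2),\, T \times T,\, \Delta_T} T \longrightarrow S,$$
which is a monomorphism of algebraic spaces (a base change of the monomorphism $\Delta_T$) through which $f$ factors uniquely as $f = h \circ f_E$. Since $f$ is surjective (submersivity implies surjectivity), so is $h$, and the factorization $\cO_S \to h_*\cO_E \hookrightarrow f_*\cO_{S'}$ combined with hypothesis $(1)$ shows that the first arrow is injective, so $h$ is schematically dominant. I would then check that $h$ is a homeomorphism of underlying topological spaces: $|h|$ is bijective because $h$ is a monomorphism, and for any subset $A \subseteq |S|$ the identity $f^{-1}(A) = f_E^{-1}(h^{-1}(A))$ together with the submersivity of $f$ and the continuity of $f_E$ shows that $A$ is open in $S$ iff $h^{-1}(A)$ is open in $E$.

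To promote the topological isomorphism $h$ to a scheme-theoretic one, I would reduce to the case where $T$ is a scheme by pulling back along an étale atlas $V \to T$; the hypothesis that $f$ is submersive after \emph{every} étale base change is designed precisely to support such a reduction. Once $T$ is a scheme, $\Delta_T$ is a locally closed immersion, and $h$ inherits this property as a base change. A surjective locally closed immersion that is a topological isomorphism is a closed immersion defined by a locally nilpotent ideal, and the schematic dominance of $h$ forces this ideal to vanish. Hence $h$ is an isomorphism and $g_1 = g_2$.

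The principal obstacle is this last reduction to $T$ a scheme: when $T$ is only an algebraic space, the diagonal $\Delta_T$ is a monomorphism but not in general a locally closed immersion, so one cannot directly conclude that $h$ is a closed immersion. The reduction requires descending equalizer data along an étale cover of $T$; pulling back $g_1, g_2$ along such a cover produces two potentially different étale $S$-spaces $V \times_{T,g_i} S$ that are canonically identified only after further pullback to $S'$, and it is precisely the "submersive after every étale base change" hypothesis that must be exploited to descend this identification back down to $S$.
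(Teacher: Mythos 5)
Your reduction of the problem to showing that the equalizer $h\colon E\to S$ is an isomorphism, and your verification that $h$ is a surjective, schematically dominant monomorphism and a homeomorphism, all match the paper's argument. But the step you yourself flag as "the principal obstacle" is precisely the content of the lemma, and your proposed way around it does not work. Pulling back along an \'etale atlas $V\to T$ of the \emph{test} space does not reduce to the scheme case: on an \'etale cover of $S$ where both $g_1,g_2$ lift to $V$, the lifts $\tilde g_1,\tilde g_2$ need not satisfy $\tilde g_1 f=\tilde g_2 f$ (only their composites to $T$ do), and the equalizer of the lifts is a pullback along $\Delta_V$ over $V\times V$, which sits over $V\times_T V$ rather than recovering the pullback of $E$; so the scheme-case conclusion cannot be transported back down. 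The paper's proof of Lemma~\ref{lemm: e-submersion is epi} avoids any reduction on $T$ altogether: the diagonal of an arbitrary algebraic space is a representable monomorphism \emph{locally of finite type} \cite[\spref{02X4}]{SP}, hence so is $h$; since $h$ is a homeomorphism after every \'etale base change on $S$ (the hypothesis is stable under such base changes), \cite[Cor.~18.12.4]{EGA4.4} --- whose proof uses only \'etale base changes --- shows $h$ is finite; a finite monomorphism is a closed immersion, and a schematically dominant closed immersion is an isomorphism. This "locally of finite type $+$ (\'etale-)universal homeomorphism $\Rightarrow$ finite $\Rightarrow$ closed immersion" step is the missing idea in your proposal, and without it the argument does not go through for non-locally-separated $T$ (compare the paper's explicit example of an epimorphism of schemes that fails to be an epimorphism of algebraic spaces).

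A secondary point: your argument for $(2)\Rightarrow(1)$ only shows injectivity of $\cA(S)\to\cA(S')$ on global sections, whereas schematic dominance is injectivity of the sheaf map $\cO_S\to f_*\cO_{S'}$; to run your $\AA^1$-argument \'etale-locally you would need $f$ to remain an epimorphism after \'etale base change, which is not available at that stage. The intended route is via the schematic image: since $f$ is qcqs the kernel is quasi-coherent and $f$ factors through the closed subspace it defines, and Lemma~\ref{lemm:at_least} (resting on the pushout construction of Lemma~\ref{lemm:gluing_along_closed_for_alg_spaces}) says an epimorphism of algebraic spaces cannot factor through a proper locally closed subspace, so that kernel vanishes.
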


\begin{prop}
(Lemma~\ref{lemma:effective_epi_and_functions} +
Corollary~\ref{coro:uniform_effective_epi})
Let $f:S'\to S$ be an integral morphism of algebraic spaces.
Then the following are equivalent:
\begin{trivlist}
\itemn{1} $\cA(S)\to \cA(S')\rightrightarrows \cA(S'')$ is exact;
\itemn{2} $f$ is an effective epimorphism in the category of algebraic spaces.
\end{trivlist}
Under these equivalent conditions, $f$ is a uniform effective epimorphism.
\end{prop}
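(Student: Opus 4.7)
My plan is to establish the equivalence (1)$\Leftrightarrow$(2) using the affine structure of integral morphisms, then handle the uniformity separately.

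\emph{Setup.} Since $f$ is integral, it is affine, so $S' = \underline{\Spec}_S(\cB)$ with $\cB := \cA(S') = f_{\ast}\cO_{S'}$. Because affineness is stable under arbitrary base change and compatible with fibre products, $S'' = \underline{\Spec}_S(\cB \otimes_{\cO_S}\cB)$ and $\cA(S'') = \cB \otimes_{\cO_S}\cB$. The functor $\underline{\Spec}_S$ gives an anti-equivalence between quasi-coherent $\cO_S$-algebras and affine $S$-algebraic spaces, swapping colimits and limits.

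\emph{Equivalence.} For $(2)\Rightarrow(1)$: effectivity of $f$ gives $S = \mathrm{coeq}(S''\rightrightarrows S')$ in $\AlgSp$, and applying $\underline{\Spec}_S$-duality étale-locally on $S$ translates this into $\cO_S = \mathrm{eq}(\cB\rightrightarrows\cB\otimes\cB)$, i.e.\ (1). For $(1)\Rightarrow(2)$: given an algebraic space $V$ and $g:S'\to V$ with $g\circ p_1 = g\circ p_2$, I construct a unique $\bar g:S\to V$. First, (1) implies $\cO_S\hookrightarrow\cB$ is injective, so $f$ is schematically dominant; combined with $f$ being integral (hence closed), $f$ is surjective. Étale-locally on $V$ one may assume $V$ is affine, whereupon $\bar g$ arises from the universal property of the equalizer $\cO_S$ applied to the algebra map $\cO_V\to\cB$ corresponding to $g$. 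Uniqueness follows from $\cO_S\hookrightarrow\cB$ being a monomorphism, and the local factorizations glue by surjectivity of $f$.

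\emph{Uniformity.} For any base change $u: T\to S$, the pullback $f_T$ is integral; since affine pushforward commutes with arbitrary base change, one has $(f_T)_{\ast}\cO = u^{\ast}\cB$ and $\cA((S'\times_S T)\times_T(S'\times_S T)) = u^{\ast}\cB\otimes_{\cO_T}u^{\ast}\cB$. By the equivalence above, showing $f_T$ is an effective epimorphism amounts to verifying exactness of the sequence $\cO_T\to u^{\ast}\cB\rightrightarrows u^{\ast}\cB\otimes_{\cO_T}u^{\ast}\cB$. \emph{The main obstacle} is that arbitrary pullback $u^{\ast}$ does not preserve equalizers, so (1) does not formally transfer to $f_T$. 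I expect the resolution to go through Theorem~\ref{theo:eff_epi_of_spaces}: $f_T$, being integral and inheriting surjectivity from $f$, is submersive after every étale base change on $T$, and the sufficient conditions of that theorem can be verified directly on $f_T$, bypassing the need for equalizer preservation under $u^{\ast}$.
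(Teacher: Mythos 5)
Your direction (2)$\Rightarrow$(1) is essentially sound: once you form $T=\Spec_S$ of the equalizer algebra $\ker(\cB\rightrightarrows\cB\otimes_{\cO_S}\cB)$ and apply the coequalizer property of $S$ to the affine $S$-space $T$, you recover the paper's argument in Lemma~\ref{lemma:effective_epi_and_functions} (your appeal to ``duality'' needs to be spelled out this way, since the coequalizer is taken in all of $\AlgSp$, not among affine $S$-spaces, but that is a routine fix). The serious gap is in (1)$\Rightarrow$(2), at the step ``\'etale-locally on $V$ one may assume $V$ is affine.'' You cannot localize on the target $V$: the map $\bar g$ does not exist yet, and the only way to reduce to an affine (or scheme) target is to pull back an \'etale atlas $Y\to V$ along $g:S'\to V$ and then \emph{descend} the resulting \'etale $S'$-space to an \'etale $S$-space. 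That descent is exactly the non-formal ingredient: one needs $f$ to be a morphism of effective descent for \'etale algebraic spaces, which for integral surjective morphisms comes from \cite[Exp.~VIII, Thm.~9.4]{SGA4.2} together with Lemma~\ref{lemm:eff_descent_etale_local_on_target}; this is the whole content of Theorem~\ref{theo:eff_epi_of_spaces} and Corollary~\ref{coro:cases_of_eff_descent}, which your argument bypasses without justification. Relatedly, your claim that uniqueness of $\bar g$ ``follows from $\cO_S\hookrightarrow\cB$ being a monomorphism'' is not automatic for algebraic spaces: the paper's example after Lemma~\ref{lemma:epi_of_schemes} shows a schematically dominant surjection of schemes that is not an epimorphism of algebraic spaces, and one needs Lemma~\ref{lemm: e-submersion is epi} (submersive after every \'etale base change, which integral surjective morphisms do satisfy) to get epimorphy.

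The uniformity part is also off target. In this paper ``uniform'' means stable under \emph{flat} base change, and for flat $u:T\to S$ the exactness of (1) does transfer: $f$ is affine, so pushforward commutes with base change, and flat pullback preserves kernels; one then simply re-applies the equivalence (or Theorem~\ref{theo:eff_epi_of_spaces}) to $f_T$. By instead considering arbitrary base change you create an obstacle that the statement does not ask you to overcome, and your proposed escape --- verifying the hypotheses of Theorem~\ref{theo:eff_epi_of_spaces} ``directly'' for $f_T$ --- is circular, since hypothesis (2) of that theorem is precisely the exactness of $\cA(T)\to\cA(T')\rightrightarrows\cA(T'')$ that you concede you cannot obtain for non-flat $u$; for such base changes the conclusion is neither claimed nor true in general.
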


Finally we prove Grothendieck's
factorization of a finite epimorphism into a finite sequence of
finite effective epimorphisms,
Proposition~\ref{prop:canonical_factorization}, placing ourselves
in a slightly more general context and giving some useful complements.

\subsection{Epimorphisms}

First we recall an easy characterization of epimorphisms of schemes.

\begin{lemma} \label{lemma:epi_of_schemes}
Let $f:S'\to S$ be a morphism of schemes. The following
conditions are equivalent:
\begin{trivlist}
\itemn{1} $f$ is an epimorphism (of schemes).
\itemn{2} $f$ does not factor through an open or closed subscheme $Z\subsetneq S$.
\itemn{3} $f$ does not factor through a subscheme $Z\subsetneq S$.
\end{trivlist}
\end{lemma}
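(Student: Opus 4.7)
The plan is to prove the cycle (1) $\Rightarrow$ (2) $\Rightarrow$ (3) $\Rightarrow$ (1), which establishes all three equivalences at once.

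For (2) $\Rightarrow$ (3) I argue contrapositively: if $f$ factors through a proper locally closed subscheme $Z \hookrightarrow S$, let $\bar Z$ denote its scheme-theoretic closure in $S$, a closed subscheme in which $Z$ is open. If $\bar Z \neq S$, then $f$ factors through the proper closed subscheme $\bar Z$; otherwise $\bar Z = S$, so $Z$ is a proper open subscheme of $S$, and $f$ factors through it. Either way (2) fails. For (3) $\Rightarrow$ (1), again contrapositively: given distinct morphisms $h_1, h_2 \colon S \to T$ with $h_1 \circ f = h_2 \circ f$, form the equalizer $E = S \times_{T \times T} T$ along the diagonal $\Delta_T \colon T \to T \times T$. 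Since $\Delta_T$ is a locally closed immersion for any scheme $T$, the map $E \hookrightarrow S$ is a subscheme; it is strict because $h_1 \neq h_2$, and $f$ factors through $E$ by the universal property of the equalizer, so (3) fails.

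The core direction is (1) $\Rightarrow$ (2), which I treat contrapositively: assuming $f = i \circ g$ for an immersion $i \colon Z \to S$ of a proper open or proper closed subscheme, I will exhibit a test scheme $T$ together with two distinct morphisms $h_1, h_2 \colon S \to T$ agreeing on $Z$; then $h_1 \circ f = h_2 \circ f$ while $h_1 \neq h_2$, so $f$ is not an epimorphism. For $Z$ open, take $T = S \sqcup_Z S$, the evident gluing of two copies of $S$ along the common open $Z$, with $h_1, h_2$ the two structural inclusions. For $Z$ closed with ideal sheaf $\cI_Z \neq 0$, take $T$ to be the relative spectrum $\Spec_S(\cB)$ of the $\cO_S$-algebra $\cB = \cO_S \times_{\cO_S/\cI_Z} \cO_S$; its two projections $\pi_1, \pi_2 \colon \cB \to \cO_S$ are distinct $\cO_S$-algebra maps (because $\cI_Z \neq 0$) that coincide modulo $\cI_Z$ by construction of the fibre product, hence give distinct sections $h_1, h_2 \colon S \to T$ of the structure map that agree after composition with $Z \hookrightarrow S$. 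The main technical point is this closed case: it amounts to realizing the pushout $S \sqcup_Z S$ as a scheme globally, and the relative-spectrum model above achieves this without needing to invoke Ferrand's theorem on pushouts along closed immersions.
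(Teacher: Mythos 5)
Your step (2) $\Rightarrow$ (3) has a genuine gap. You assert that a locally closed subscheme $Z\subseteq S$ is open in its scheme-theoretic closure $\bar Z$. This holds when the immersion $Z\into S$ is quasi-compact (e.g.\ when $S$ is locally noetherian), but it is false for arbitrary schemes: there are immersions admitting no factorization as an open immersion followed by a closed immersion (see the example in \cite[\spref{01QW}]{SP}), and replacing the ambient scheme by the scheme-theoretic closure in such an example produces exactly your problematic case, namely $\bar Z=S$ while $Z$ is not open in $S$. So the dichotomy ``either $\bar Z\subsetneq S$, or $Z$ is a proper open subscheme'' does not cover all cases, and the lemma is stated for arbitrary schemes. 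The repair is the other, always-available factorization, which is what the paper uses: by definition $Z$ is a closed subscheme of an open subscheme $U\subseteq S$; if $U\subsetneq S$ then $f$ factors through the proper open subscheme $U$, and if $U=S$ then $Z$ itself is a proper closed subscheme. With this one-line replacement your cycle of implications closes.

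The other two implications are correct. Your (3) $\Rightarrow$ (1), via the preimage of the diagonal, is exactly the paper's argument, and your (1) $\Rightarrow$ (2) in the open case coincides with the paper's gluing of two copies of $S$ along the open $Z$. In the closed case the paper invokes Ferrand's theorem (or \cite{SP}) to know that the pushout $S\amalg_Z S$ is a scheme, whereas you take $T=\Spec_S(\cB)$ with $\cB=\cO_S\times_{\cO_S/\cI_Z}\cO_S$ directly; this works and is genuinely more self-contained. Indeed $\cB$ is quasi-coherent (it is the kernel of $\cO_S\oplus\cO_S\to\cO_S/\cI_Z$), so $\Spec_S(\cB)$ exists and its $S$-sections are the $\cO_S$-algebra maps $\cB\to\cO_S$; the two projections are distinct precisely because $\cI_Z\neq 0$, and they agree after composition with $Z\into S$ by definition of the fibre product. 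You never need to know that $T$ is the categorical pushout (though it is, in this affine situation, Ferrand's pincement), which is what spares you the citation.
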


\begin{proof}
(1) $\To$ (2). Assume that $f$ factors through a subscheme
$Z\subsetneq S$ which is either open or closed. Let $X=S\amalg_Z S$ be
the ringed space obtained by gluing two copies of $S$ along their common
copy of~$Z$. If $Z$ is open then $X$ is a scheme by ordinary topological
gluing, and if $Z$ is closed then $X$ is a scheme by
Ferrand~\cite[Thm.~7.1]{Fe03} or \cite[\spref{0B7M}]{SP}.
Let $u,v:S\to X$ be the canonical maps.
We have $u\ne v$ and $uf=vf$, so~$f$ is not an epimorphism.

\smallskip

\no (2) $\To$ (3) Immediate because a subscheme is a closed subscheme
of an open subscheme.

\smallskip

\no (3) $\To$ (1). Let $X$ be a scheme and let $u,v:S\to X$ be morphisms
such that $uf=vf$. Let $Z$ be the preimage of the diagonal
$\Delta:X\to X\times X$ by the map $(u,v):S\to X\times X$. Since $\Delta$
is an immersion, then~$Z$ is a subscheme of $S$. Since $f$ factors through $Z$,
by (3) it follows that $Z=S$. This shows that $(u,v)$ factors through the
diagonal, that is $u=v$.
\end{proof}

Recall that an algebraic space is called {\em locally separated}
if its diagonal is an immersion. Clearly the lemma and its proof show
that an epimorphism of schemes is also an epimorphism in the category
of locally separated algebraic spaces. However, it may fail to
be an epimorphism in the category of all algebraic spaces, even if it is
surjective and schematically dominant. Here is a counter-example.

\begin{exam}
Let $k$ be a field of characteristic $\ne 2$.
Consider the scheme
\[
S=\Spec(k[x,y]/(x^2-y^2))
\]
with closed
subscheme $Y=V(x-y)$ and open complement $U=D(x-y)=S\setminus Y$.
Let $S'=Y\amalg U$. Then the canonical map $f:S'\to S$ is a surjection
to a reduced scheme, hence an epimorphism of schemes by the lemma above.
The map $j:S'\to S\subset \AA^1_k\times\AA^1_k$ defines an
\'etale equivalence relation on $\AA^1_k$. We let $\pi:\AA^1_k\to X$
be the quotient algebraic space. By construction, the pullback of the diagonal $X\subset X\times X$ to $\AA^1_k\times\AA^1_k$ is $S'$.
Let $u,v:S\to\AA^1_k\to X$ be the maps induced by the two projections
$\pr_1,\pr_2:S\to \AA^1_k$. These maps are distinct, since
otherwise $(u,v)$ would factor through the diagonal of $X$, which would mean
that $(p_1,p_2):S\to \AA^1_k\times\AA^1_k$ factors through $S'$, which it does
not. However $uf=vf$, hence $f$ is not an epimorphism of algebraic spaces.
\end{exam}

In the applications that we have in mind, it is cumbersome to check
that the algebraic spaces involved satisfy some separation condition.
Because of this, we spend some effort on obtaining criteria for
epimorphisms in the category of all algebraic spaces. In order to put
\ref{lemma:epi_of_schemes} in perspective, it is useful
to have the construction of gluing along closed subschemes
available for algebraic spaces. This is originally due to Raoult \cite{Ra74}.
Variants appear in \cite[Thm.~6.1]{Ar70}, \cite[Thm.~A.4]{Ry11},
\cite[Thm.~2.2.2]{CLO12}, \cite[Thm.~5.3.1]{TT16}. In all these sources, the
hypotheses allow one of the maps $f,g$ of the gluing diagram to be finite or
at least affine and usually some noetherian-like assumptions are present.
It is known to most people that these assumptions are not essential
at least when both maps $f,g$ are closed immersions; we give a statement
with the main input for the proof coming from \cite{SP}.

\begin{lemma} \label{lemm:gluing_along_closed_for_alg_spaces}
Let $i_1:Y\into X_1$ and $i_2:Y\into X_2$ be closed immersions of algebraic
spaces. Then, there exists a pushout $W=X_1\amalg_Y X_2$ in the category
of algebraic spaces:
\[
\xymatrix@C=15mm{
Y \ar[r]^{i_2} \ar[d]_{i_1} & X_2 \ar[d]^b \\
X_1 \ar[r]^-a & W.}
\]
Moreover, the diagram is a cartesian square; the maps $a,b$ are closed
immersions; the pushout is topological, i.e., its underlying topological
space is $|X_1|\amalg_{|Y|} |X_2|$; and there is a short exact sequence
\[
0 \too \cO_W \too a_*\cO_{X_1}\oplus b_*\cO_{X_2} \too
c_*\cO_Y\too 0
\]
of sheaves on the small \'etale site of $W$.
\end{lemma}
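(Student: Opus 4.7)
The plan is to reduce to the case of affine schemes and then to globalize by an étale-local construction. In the affine case, with $Y=\Spec B$ and $X_i=\Spec A_i$ and surjections $A_i\onto B$, one sets $A=A_1\times_B A_2$ and $W=\Spec A$. All four assertions reduce to direct commutative-algebra computations: the canonical projections $A\onto A_i$ are surjective with kernels identifiable with $\ker(A_j\onto B)$ for $j\ne i$, which gives both the cartesian square and the fact that $a,b$ are closed immersions; the topological pushout statement is classical in the affine setting; and the short exact sequence $0\to A\to A_1\oplus A_2\to B\to 0$ (second arrow the difference of the two structure maps) is exact precisely because one of $A_i\to B$ is surjective. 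This is the content of Stacks Project Tag~0B7M, and the globalization to general schemes is due to Ferrand~\cite{Fe03}.

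To pass from schemes to algebraic spaces, I would define $W$ by its expected universal property as a sheaf on the big étale site of $\AlgSp$, and then exhibit an étale atlas by a scheme. Given a point $\xi\in|Y|$, one chooses compatible affine étale neighborhoods $V_1\to X_1$, $V_2\to X_2$ and $V_Y\to Y$ of~$\xi$ satisfying $V_i\times_{X_i}Y=V_Y$. Such compatible choices exist étale-locally around $\xi$: an étale $\cO_{Y,\xi}^{\sh}$-algebra lifts to an étale $\cO_{X_i,\xi}^{\sh}$-algebra because $\cO_{Y,\xi}^{\sh}$ is a quotient of $\cO_{X_i,\xi}^{\sh}$ and any lift of an étale polynomial presentation remains étale. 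One starts with any affine étale neighborhoods of $\xi$ in $X_1$ and $X_2$, takes a common refinement of their pullbacks to~$Y$, and then further shrinks using this lifting property. The scheme case applied to $(V_Y,V_1,V_2)$ produces an affine scheme pushout $V_1\amalg_{V_Y}V_2$ mapping to~$W$; varying $\xi$ over $|Y|$ and adjoining affine étale covers of $X_1\setminus Y$ and $X_2\setminus Y$ yields a scheme étale cover of~$W$. The étale equivalence relation on this cover is assembled by running the same pushout construction on the relevant fibre products.

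The remaining assertions of the lemma (cartesian square, closed immersions $a,b$, topological pushout $|W|=|X_1|\amalg_{|Y|}|X_2|$, and the short exact sequence of structure sheaves on the small étale site of~$W$) are all étale-local on~$W$, so they follow from the corresponding affine statements verified in the first paragraph, once pulled back along the atlas built in the second. The main obstacle is the construction of this atlas: one must guarantee that affine étale neighborhoods of each point $\xi\in|Y|$ in $X_1$ and $X_2$ can be adjusted to agree over~$Y$, and then check that the resulting family of affine scheme pushouts glues into a single algebraic space representing the target functor. This is the technical heart of the argument; everything else is descent.
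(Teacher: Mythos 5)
Your overall strategy is the same as the paper's: settle the affine case via $A_1\times_B A_2$, use lifting of \'etale algebras along the closed immersion to produce \'etale charts of $X_1$ and $X_2$ that agree over $Y$, glue the charts and the relations by the scheme case, and check the four assertions \'etale-locally. The pointwise strict-henselization argument for compatible neighborhoods is a workable variant of the paper's global step (cover $E_1\times_Y E_2$ by a sum of affines and lift it to \'etale covers of $U_1$ and $U_2$). But there is a genuine gap in how you set up $W$. You propose to \emph{define} $W$ as the sheaf on the big \'etale site with the expected universal property, i.e.\ as the pushout in the category of sheaves, and then to exhibit an atlas for it. This object is not the Ferrand pushout and is in general not representable. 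Take $X_1=X_2=\AA^1_k$, $Y$ the origin, so the pushout should be $W=\Spec k[x,y]/(xy)$. If $U\to W$ is \'etale and $u\in U$ lies over the crossing point $w$, then $\cO_{W,w}\to\cO_{U,u}$ is faithfully flat, hence injective, so neither $x$ nor $y$ becomes zero on $U$; therefore $U\to W$ factors through neither axis, and the identity of $W$ is not, \'etale-locally, a point of $X_1\amalg X_2$. Thus $h_W$ is strictly larger than the sheaf pushout, and the sheaf pushout cannot be an algebraic space (if it were, it would have to coincide with the algebraic-space pushout $W$). For the same reason your local charts $V_1\amalg_{V_Y}V_2$ do not even carry a canonical map to that sheaf: the universal property of the affine pushout applies to space-valued targets, not to arbitrary sheaves, so the phrase ``glues into a single algebraic space representing the target functor'' cannot be made to work.

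The repair is exactly the paper's route, and it changes the logical order of your argument rather than its ingredients: construct compatible \'etale covers $U_i\to X_i$ by sums of affines with $U_1\times_{X_1}Y\simeq U_2\times_{X_2}Y=:E$, form the scheme pushouts $\sU=U_1\amalg_E U_2$ and $\sR=R_1\amalg_F R_2$ where $R_i=U_i\times_{X_i}U_i$ and $F$ is the common restriction over $Y$, check that the two induced maps $\sR\rightrightarrows\sU$ are \'etale (this is a genuine verification, done \'etale-locally; the paper cites Stacks Project Tag 08KQ, and your sketch omits it), and \emph{define} $W:=\sU/\sR$. Only then does one verify the universal property of the pushout --- which concerns maps \emph{out of} $W$ and is a formal check against the atlas --- together with cartesianity, the fact that $a,b$ are closed immersions, the topological statement, and the exact sequence, all of which are indeed \'etale-local on $W$ as you say. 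So your affine computations and your lifting argument are fine; what is missing is a correct definition of $W$ (as a quotient of the glued atlas by the glued \'etale equivalence relation) and the subsequent verification that this $W$ satisfies the pushout property, neither of which can be bypassed by appealing to a representing sheaf.
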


\begin{proof}
We will reduce to the known case of schemes. For this
we will use the following classical extension result for
\'etale maps: if $U,E,E'$ are disjoint unions
of affine schemes (henceforth to be called {\em sums of affines}
for brevity) and $E\into U$ is a closed immersion,
and $E'\to E$ is an \'etale morphism, then there exists
a sum of affines~$U'$ and an \'etale
morphism $U'\to U$ such that $E'\simeq U'\times_U E$.
The proof can be found for example in \cite[\spref{04D1}]{SP}.
Note that if $E'\to E$ is surjective, we may choose
$U'\to U$ surjective by adding to $U'$ the sum of affines
in a Zariski covering of $U\setminus E$.

For each $i=1,2$ let $\pi_i:U_i\to X_i$ be an \'etale surjective map
where~$U_i$ is a sum of affines. Let $E_i=U_i\times_{X_i}Y$. Then
$E_1\times_Y E_2$ is \'etale surjective over $E_1$ and $E_2$.
Let $E'$ be the sum of affines given by a Zariski covering of
$E_1\times_Y E_2$. By the fact quoted above, for each $i=1,2$ there
exists $U'_i\to U_i$ \'etale surjective whose restriction to $E_i$
is isomorphic to $E'$. In this way, replacing $U_i$ by $U'_i$ we
see that we can assume that $E_1\simeq E_2$. Now for $i=1,2$ let
$R_i=U_i\times_{X_i} U_i$ with its two projections $s_i,t_i:R_i\to U_i$.
Let $F_i$ be the preimage of $Y$ in $R_i$. Since $\pi_is_i=\pi_it_i$,
this is isomorphic to the preimage of $E_i$ under any of the maps
$s_i$ or $t_i$. The isomorphism $E_1\simeq E_2$ induces a compatible
isomorphism $F_1\simeq F_2$; in the sequel we view these isomorphisms
as identifications so we write $E=E_1=E_2$ and $F=F_1=F_2$.

By the scheme
case the pushouts $\sU:=U_1\amalg_E U_2$ and $\sR:=R_1\amalg_F R_2$
make sense as schemes. Using the pushout property for $\sR$ we see
that the maps $s\amalg s,t\amalg t: R_1\amalg R_2\to U_1\amalg U_2$
induce maps which for simplicity we again denote $s,t: \sR\to \sU$.
They are clearly surjective. We claim that moreover they are \'etale.
This is a local property and is proved in \cite[\spref{08KQ}]{SP}.
Let $W=\sU/\sR$ be the quotient algebraic space. Checking that
$W$ is the pushout is formal, and obtaining the additional
properties is easy by taking an atlas.
\end{proof}

We obtain at least a necessary condition.

\begin{lemma} \label{lemm:at_least}
An epimorphism of algebraic spaces does not factor through
a locally closed subspace $Z\subsetneq S$.
\end{lemma}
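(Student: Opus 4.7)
The plan is to mimic the scheme-theoretic proof of Lemma~\ref{lemma:epi_of_schemes}: assuming $f:S'\to S$ factors through a locally closed subspace $Z\subsetneq S$, I construct an algebraic space $W$ together with two distinct morphisms $u,v:S\to W$ satisfying $uf=vf$, contradicting the hypothesis that $f$ is an epimorphism. First I reduce to the two basic cases ``$Z$ closed in $S$'' and ``$Z$ open in $S$''. Since $Z$ is locally closed it is open in its closure $\bar Z\subseteq S$. If $\bar Z\subsetneq S$, then $\bar Z$ is a proper closed subspace through which $f$ also factors, so after replacing $Z$ by $\bar Z$ I may assume $Z$ is closed; otherwise $\bar Z=S$ and $Z$ is a proper dense open.

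In the closed case, I apply Lemma~\ref{lemm:gluing_along_closed_for_alg_spaces} with $X_1=X_2=S$ and $Y=Z$, obtaining a pushout $W=S\amalg_Z S$ together with two closed immersions $u,v:S\to W$ whose restrictions to $Z$ agree. Since the pushout square is cartesian, $S\times_{W,u,v}S=Z$; if $u$ and $v$ coincided, the diagonal would furnish a section of the closed immersion $Z\hookrightarrow S$, forcing $Z=S$, contrary to assumption. Therefore $u\neq v$, while $uf=vf$ because $f$ factors through $Z$.

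In the open case, I construct the pushout $W=S\amalg_Z S$ directly by étale descent: choose an étale atlas $U\to S$ with $U$ a scheme, set $V=Z\times_S U$ (an open subscheme of $U$), and form the Zariski gluing $U'=U\amalg_V U$, itself a scheme. The étale equivalence relation $R=U\times_S U\rightrightarrows U$ extends to an étale equivalence relation $R'\rightrightarrows U'$ obtained by gluing two copies of $R$ along $R_V:=V\times_Z V$, and I set $W:=[U'/R']$. This $W$ is an algebraic space equipped with two open immersions $u,v:S\to W$ that meet along $Z$, and the resulting square is again cartesian with $S\times_{W,u,v}S=Z$, so the same argument as in the closed case gives $u\neq v$ while $uf=vf$. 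The one step requiring genuine care is verifying that this étale-descent construction really yields an algebraic space representing the pushout with the expected cartesian property; this is a routine but not entirely trivial descent computation, and it is the main piece of the argument not already provided by Lemma~\ref{lemm:gluing_along_closed_for_alg_spaces}.
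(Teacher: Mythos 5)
Your overall strategy is the paper's own: the paper's proof is literally ``same proof as Lemma~\ref{lemma:epi_of_schemes}, using Lemma~\ref{lemm:gluing_along_closed_for_alg_spaces} instead of Ferrand's pushout'', i.e.\ glue two copies of $S$ along $Z$ and use the two canonical maps. Your treatment of the two gluing cases is fine: in the closed case the cartesianness of the pushout square from Lemma~\ref{lemm:gluing_along_closed_for_alg_spaces} does give $S\times_{W,u,v}S=Z$, hence $u\neq v$ (a monomorphism with a section is an isomorphism); and in the open case your atlas construction is correct (your $R'$ is indeed $R\amalg_{R_V}R$ with $R_V=V\times_Z V$ open in $R$), although this is just the standard gluing of algebraic spaces along open subspaces, which the paper --- like the scheme proof, which invokes ``ordinary topological gluing'' --- takes as known; the verification you defer is immediate since the two maps $S\to W$ are open immersions meeting along $Z$.

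The genuine gap is in your reduction. You claim that a locally closed subspace $Z$ is open in its closure $\bar Z$, but you do not say which structure $\bar Z$ carries, and neither reading works in general. If $\bar Z$ is the topological closure with the reduced structure, then $f$ need not factor through $\bar Z$ and $Z$ need not be open in it: take $Z=S_{\mathrm{red}}\subsetneq S$ for a nonreduced $S$; its closure is all of $|S|$, yet $Z$ is not open in $S$, so your dichotomy puts you in the ``proper dense open'' branch with a $Z$ that is not open. If instead $\bar Z$ is meant to be a closed subspace containing $Z$ as an open subspace, then you are asserting that the immersion $Z\to S$ factors as an open immersion followed by a closed immersion; this holds when the immersion is quasi-compact (via the scheme-theoretic image) or $Z$ is reduced, but it fails for general immersions, and the lemma carries no such hypothesis. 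The repair is the factorization built into the definition and used in the scheme case (step (2)$\Rightarrow$(3) of Lemma~\ref{lemma:epi_of_schemes}): $Z$ is a closed subspace of an open subspace $U\subseteq S$. If $U\subsetneq S$, then $f$ factors through the proper open subspace $U$ and your open-gluing argument applies with $U$ in place of $Z$; if $U=S$, then $Z$ is closed and your closed case applies verbatim. With this change your proof is complete and coincides with the paper's.
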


\begin{proof}
Same proof as~\ref{lemma:epi_of_schemes} using
Lemma~\ref{lemm:gluing_along_closed_for_alg_spaces} instead
of \cite[Thm.~7.1]{Fe03}.
\end{proof}

We now present two simple examples of epimorphisms of algebraic
spaces. The first
one improves \cite[Prop.~7.2]{Ry10} where it is assumed that
$f$ is a submersion after every base change.

\begin{lemma} \label{lemm: e-submersion is epi}
Let $f:S'\to S$ be a morphism of algebraic spaces which is
schematically dominant, and submersive after every \'etale base
change on $S$. Then $f$ is an epimorphism of algebraic spaces, and
remains an epimorphism after every \'etale base change.
\end{lemma}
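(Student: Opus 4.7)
Both hypotheses on $f$---schematic dominance and submersivity under arbitrary étale base change on $S$---are themselves preserved under further étale base change on $S$. Hence the second assertion of the lemma follows automatically from the first, and it suffices to prove that $f$ is an epimorphism: given $u,v : S \to X$ with $uf = vf$, I must show $u = v$.

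The strategy is to form the equalizer $E := S \times_{(u,v),\, X \times X,\, \Delta_X} X$. Since $\Delta_X$ is always a monomorphism of algebraic spaces, $i : E \to S$ is a monomorphism, and the condition $uf = vf$ is precisely the statement that $f$ factors uniquely as $f = i \circ f'$ through $i$. Our goal becomes showing that $i$ is an isomorphism. Because $f$ is submersive and hence surjective on points and factors through $i$, the monomorphism $i$ is automatically surjective on underlying topological spaces.

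To promote this to a scheme-theoretic identification, I would reduce to the case that $S$ is a scheme via an étale atlas, then choose an étale atlas $p : V \to X$ by a scheme and form $\tilde S := (S \times_{u,X,p} V) \times_S (S \times_{v,X,p} V)$, an étale cover of $S$. Over $\tilde S$, both $u$ and $v$ lift to morphisms $\tilde u, \tilde v : \tilde S \to V$, and the base change of $E$ becomes the pullback $\tilde E = \tilde S \times_{V\times V,\,(\tilde u,\tilde v)} (V \times_X V)$ of the canonical monomorphism $V \times_X V \hookrightarrow V \times V$. Arguing that this latter monomorphism is in fact a (locally closed) immersion of schemes---this uses that $V \to X$ is étale, so that $\Delta_V : V \hookrightarrow V \times_X V$ is open, combined with a structural description of the complement---we obtain that $\tilde E \hookrightarrow \tilde S$ is an immersion that is surjective on points. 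Finally, the injection $\cO_{\tilde S} \hookrightarrow (f_{\tilde S})_* \cO_{S' \times_S \tilde S}$ (schematic dominance of the base change of $f$) factors through the structural map $\cO_{\tilde S} \to \cO_{\tilde E}$, forcing this surjection (on the closed part of the immersion) to be injective, hence an isomorphism. Equivalently $\tilde E = \tilde S$ scheme-theoretically, and étale descent yields $u = v$.

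The main obstacle in this approach is the technical verification that $V \times_X V \to V \times V$ is an immersion of schemes. This is clear when $X$ is locally separated (as $\Delta_X$ is already an immersion) but requires some care in full generality; once granted, the rest of the argument is a bookkeeping combination of set-theoretic surjectivity and injectivity of the structure map $\cO_{\tilde S}\to \cO_{\tilde E}$.
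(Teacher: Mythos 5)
There is a genuine gap, and it sits exactly where you flagged your ``main obstacle'': the claim that $V\times_X V\to V\times V$ is a (locally closed) immersion is not a technical verification requiring care, it is false in general. That map is the pullback of $\Delta_X$ along the \'etale morphism $V\times V\to X\times X$, so asking it to be an immersion essentially amounts to assuming $X$ locally separated --- precisely the hypothesis this lemma is designed to avoid. The Example following Lemma~\ref{lemma:epi_of_schemes} in the paper is a direct counterexample: for the algebraic space $X$ obtained as the quotient of $V=\AA^1_k$ by the \'etale equivalence relation $S'=Y\amalg U$ (with $Y=V(x-y)$, $U=D(x-y)\cap S$), one has $V\times_X V=S'$ and the map $S'\to\AA^1_k\times\AA^1_k$ is not an immersion (the origin of $Y$ lies in the closure of the image of $U$, so $S'\to S$ is not a homeomorphism onto its image). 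The openness of $\Delta_V$ in $V\times_X V$ gives no control on how the complement of the diagonal closes up inside $V\times V$, and no ``structural description of the complement'' can fix this. A further symptom that the route cannot work: apart from transporting the hypotheses, your argument uses submersivity only to know that $E\to S$ is surjective, so if it were correct it would prove that every surjective, schematically dominant (qcqs) morphism is an epimorphism of algebraic spaces --- and that is exactly what the same Example refutes ($f:S'\to S$ there is surjective and schematically dominant but not an epimorphism of spaces). What your argument does recover is the locally separated case, which the paper already notes is easy after Lemma~\ref{lemma:epi_of_schemes}.

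The paper's proof uses the strengthened hypothesis in an essential way at the point where your immersion claim breaks down. One forms the same equalizer $Z=E$; since $\Delta_X$ is a representable monomorphism locally of finite type, so is $g:Z\to S$. Because $f$ is submersive and factors through $Z$, the monomorphism $g$ is submersive, hence a homeomorphism, and --- this is where ``submersive after every \'etale base change'' enters --- it remains a homeomorphism after every \'etale base change on $S$. Then \cite[Cor.~18.12.4]{EGA4.4} (whose proof only uses \'etale base changes) shows that $g$ is finite; a finite monomorphism is a closed immersion, and schematic dominance of $f$ forces it to be an isomorphism, giving $u=v$. If you want to salvage your atlas-based reduction, you would have to replace the immersion claim by an argument of this type (monomorphism locally of finite type $+$ universal-for-\'etale-base-change homeomorphism $\Rightarrow$ closed immersion); as written, the proposal does not reach the general case. (A minor further point: $S\times_{u,X,p}V$ is a priori only an algebraic space, not a scheme, but that could be repaired by a further atlas and is not the real issue.)
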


\begin{proof}
The assumptions are stable by \'etale base change, hence
it is enough to prove that $f$ is an epimorphism.
Let $X$ be an algebraic space and let $u,v:S\to X$ be morphisms
such that $uf=vf$. Let~$Z$ be the preimage of the diagonal
$\Delta:X\to X\times X$ by the map $(u,v):S\to X\times X$. Since
$\Delta$ is a representable monomorphism of spaces which is locally
of finite type, see
\cite[\spref{02X4}]{SP},
the map $g:Z\to S$ has the same properties. By the assumption on $u,v$ the
map $f$ factors through $Z$. This shows that $g$ is a submersive
monomorphism, hence a homeomorphism.
By the assumption on $f$, this remains true after every \'etale base
change on $S$. Then \cite[Cor.~18.12.4]{EGA4.4}, whose proof uses only
\'etale base changes, shows that $g$ is finite. Thus $g$ is a closed
immersion which is schematically dominant, hence an
isomorphism. Hence $u=v$, and $f$ is an epimorphism of spaces.
\end{proof}

\begin{lemma} \label{lemma:epi_to_noetherian_local_scheme}
Let $S=\Spec(A)$ be a noetherian local scheme and let
$S_n=\Spec(A/m^{n+1})$ be the $n$-th thickening of the closed point.
Then $f:\coprod_{n\ge 0} S_n\to S$ is an epimorphism of algebraic spaces.
\end{lemma}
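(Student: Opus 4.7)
The strategy is to let $Z\to S$ be the equalizer of any pair $u,v:S\to X$ with $u\circ f=v\circ f$ and to prove, using the density of the family $\{S_n\}$ in the formal neighborhood of the closed point $s_0$, that $Z\to S$ is an isomorphism.

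Form $g:Z=S\times_{X\times X,(u,v),\Delta_X}X\to S$. Since the diagonal of an algebraic space is a representable monomorphism locally of finite type (\cite[\spref{02X4}]{SP}), so is $g$, and since $g$ is representable with target the scheme $S$, $Z$ is itself a scheme. The hypothesis $u\circ f=v\circ f$ gives compatible factorizations $S_n\to Z$ for every $n\ge 0$, so it suffices to show that $g$ is an isomorphism.

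Because $g$ is a monomorphism, its fiber $Z\times_S\Spec\kappa(s_0)$ is either empty or isomorphic to $\Spec\kappa(s_0)$; the map $S_0\to Z$ shows it is nonempty, so there is a unique point $z_0\in Z$ above $s_0$ with $\kappa(z_0)=\kappa(s_0)$, and each $S_n\to Z$ is a closed immersion landing at $z_0$. The heart of the argument is to show $g$ is étale at $z_0$. Write $B=\cO_{Z,z_0}$. Unramifiedness of $g$ together with $\kappa(z_0)=\kappa(s_0)$ gives $\fm B=\fm_B$ and $A/\fm\isomto B/\fm_B$; since $B$ is essentially of finite type over $A$, it follows that $\widehat{A}\to\widehat{B}$ is surjective. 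On the other hand, the compatible surjections $B\to A/\fm^{n+1}$ coming from $S_n\to Z$ have composition with $A\to B$ equal to the canonical projection, so in the limit they assemble into a ring homomorphism $\widehat{B}\to\widehat{A}$ which is a section of $\widehat{A}\to\widehat{B}$. Hence both maps are isomorphisms. Because $B\to\widehat{B}$ is faithfully flat, $B$ inherits flatness over $A$ from $\widehat{B}$; combined with unramifiedness and local finite presentation, this shows that $g$ is étale at $z_0$.

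Finally, an étale monomorphism is an open immersion. Since $z_0$ is the unique point of $Z$ above the closed point $s_0$ of the local scheme $S$, the open image of $g$ contains $s_0$ and therefore equals $S$, so $g$ is an isomorphism and $u=v$. This proves that $f$ is an epimorphism. The main obstacle is the flatness step in the third paragraph: one must recognize that the compatible lifts $S_n\to Z$ package into a section of the surjection $\widehat{A}\to\widehat{B}$, after which surjection-with-section forces an isomorphism and faithful flatness transfers the result back to $B$.
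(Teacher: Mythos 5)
Your proof is correct, but it takes a genuinely different route from the paper's. Both arguments begin the same way, forming the equalizer $Z=S\times_{(u,v),X\times X,\Delta}X$, a scheme with a representable monomorphism $g:Z\to S$ locally of finite type, and using the hypothesis to produce compatible lifts $S_n\to Z$ through the unique point $z_0$ over the closed point. From there the paper first reduces to the case where $A$ is complete (the completion being fppf over $S$), then uses the Henselian property to split $Z=Z_0\amalg Z_1$ with $Z_0$ finite over $S$ containing $z_0$, and concludes by Nakayama that $Z_0\to S$ is a closed immersion cut out by an ideal contained in every $\fm^{n+1}$, hence zero by Krull intersection. You instead stay with the local ring $B=\cO_{Z,z_0}$: unramifiedness plus trivial residue extension gives surjectivity of $\widehat{A}\to\widehat{B}$, the infinitesimal lifts give a one-sided inverse forcing this to be an isomorphism, flatness descends from $\widehat{B}$ along the faithfully flat $B\to\widehat{B}$, and then ``\'etale monomorphism $=$ open immersion'' together with locality of $S$ finishes. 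Your route avoids the completion reduction and the Henselian structure theorem for locally quasi-finite separated morphisms, at the price of the flatness-descent/\'etaleness analysis; the paper's is shorter but leans on that structure theory. Three small points to tighten: the map $\widehat{B}\to\widehat{A}$ you build is a retraction (left inverse) of $\widehat{A}\to\widehat{B}$, i.e.\ the composite $\widehat{A}\to\widehat{B}\to\widehat{A}$ is the identity, not a section; combined with the surjectivity already proved this still gives that both are isomorphisms, so nothing is lost. Local finite presentation of $g$ should be justified by noting that $S$ is noetherian, so locally of finite type implies locally of finite presentation. Finally, you only prove $g$ is \'etale at $z_0$, hence an open immersion on a neighborhood $Z'$ of $z_0$; since the open image contains the closed point of the local scheme $S$ it is all of $S$, and the resulting section of $g$ (or the monomorphism property, which forces $Z=Z'$) already yields $u=v$.
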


\begin{proof}
Since $f$ factors through the maximal-adic completion of $S$
which is fppf over $S$, it is enough to assume that $S$ is complete.
Let $u,v:S\to X$ be such that $uf=vf$, and $Z$ as in the proof
of~\ref{lemm: e-submersion is epi}. Since $S$ is Henselian
we can write $Z=Z_0\amalg Z_1$ where $Z_0$
is finite over $S$ and contains the unique closed point above the
closed point of $S$. By assumption $Z_0\to S$ is an isomorphism
over every $S_n$. Using Nakayama, we find that $Z_0\to S$ is
a closed immersion. Since $S$ is noetherian, this implies that
$Z_0\to S$ is an isomorphism.
\end{proof}

\begin{remarks}
The noetherian assumption is of course crucial, since otherwise we
may e.g.\ have $m=m^n$ for all $n\ge 1$.
\end{remarks}

\subsection{Effective epimorphisms}

\begin{definition}
We say that $f:S'\to S$ is an {\em effective epimorphism of
algebraic spaces} if the diagram $S'\times_S S'\rightrightarrows S'\to S$ is exact,
that is, if for all algebraic spaces $X$ we have an exact
diagram of sets:
\[
\Hom(S,X)\to\Hom(S',X)\rightrightarrows\Hom(S'\times_S S',X).
\]
\end{definition}


Another way to say it is that $S$ is the categorical quotient of $S'$
by the groupoid $S'\times_SS'\rightrightarrows S'$.

\begin{exam}
An fpqc covering of algebraic spaces is an
effective epimorphism of algebraic spaces~\cite[\spref{04P2}]{SP}.
\end{exam}

If $f:X\to S$ is a morphism, we write $\cA_S(X)=f_*\cO_X$ or simply
$\cA(X)=f_*\cO_X$ if the base~$S$ is clear from context.
For instance $\cA(S)=\cO_S$. Also let us write $S''=S'\times_S S'$.

\begin{lemma} \label{lemma:effective_epi_and_functions}
Let $f:S'\to S$ be a quasi-compact and quasi-separated morphism of
algebraic spaces. Assume that $f$ is an effective epimorphism.
Then the sequence $\cA(S)\to\cA(S')\rightrightarrows\cA(S'')$ is exact.
\end{lemma}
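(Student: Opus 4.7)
The plan is to reduce the sheaf-theoretic exactness to a statement about global sections on an étale cover, and then invoke the effective epimorphism hypothesis with a cleverly chosen test space. Since $f$ is qcqs, the quasi-coherent sheaves $\cA(S')=f_*\cO_{S'}$ and $\cA(S'')$ have formation commuting with flat base change, and exactness of a sequence of quasi-coherent sheaves on $S$ can be checked étale-locally. It therefore suffices to show that for every étale morphism $g:U\to S$ with $U$ an affine scheme, the sequence
\[
\cO(U) \to \Gamma(U\times_S S', \cO) \rightrightarrows \Gamma(U\times_S S'', \cO)
\]
is exact.

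To prove this, I would express the sequence as $\Hom_S(-, Y)$ applied to $S''\rightrightarrows S'\to S$ for the Weil restriction $Y := \mathrm{Res}_{U/S}(\AA^1_U)$, whose defining property $\Hom_S(T, Y) = \Gamma(T\times_S U, \cO)$ identifies the displayed sequence with $\Hom_S(S, Y) \to \Hom_S(S', Y) \rightrightarrows \Hom_S(S'', Y)$. The latter is an equalizer by the effective epimorphism property of $f$ applied to the $S$-algebraic space $Y$, using that effective epimorphism in $\AlgSp$ is inherited by the slice category $\AlgSp/S$.

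The main obstacle will be the representability of $Y$ as an $S$-algebraic space. When $U\to S$ is finite étale, $g_*\cO_U$ is locally free of finite rank and $Y$ is the associated affine $S$-scheme, so representability is automatic. For a general étale quasi-compact $U\to S$, one invokes the local structure theorem for étale morphisms to factor $U\to S$ étale-locally as an open immersion into a finite étale $S$-scheme, thereby reducing to the finite étale case and handling the open-immersion factor via the exactness of localization of quasi-coherent modules.
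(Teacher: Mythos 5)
Your reduction to exactness of sections over affine \'etale $U\to S$, and the slice-category use of effectivity via $\Hom_S(-,Y)$, are both sound, and your argument is complete when $U\to S$ is finite \'etale (there $Y$ is the linear scheme attached to the finite locally free module $g_*\cO_U$). The genuine gap is the representability of $Y=\mathrm{Res}_{U/S}(\AA^1_U)$ for a general quasi-compact \'etale $U\to S$: this fails, already for the basic open immersion $U=\GG_m\subset S=\AA^1$ over a field $k$. Indeed, $Y(S)=\Gamma(U,\cO)=k[x,x^{-1}]$, and the equalizer of the two sections $0$ and $x$ is the subfunctor of $\AA^1$ consisting of the $T$-points for which $T\times_S U=\emptyset$, i.e.\ for which $x$ is locally nilpotent on $T$; this is the formal completion of $\AA^1$ at the origin, which is not an algebraic space (it has a single point, hence would be quasi-compact, forcing $x^N=0$ on it for some $N$, contradicting that $\Spec k[x]/(x^{N+1})$ factors through it). So the diagonal of $Y$ is not representable and $Y$ is not an algebraic space, and your key application of effectivity to $Y$ is not legitimate.

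Your proposed repair does not close this gap. The factorization of $U\to S$ as an open immersion into a \emph{finite \'etale} scheme is only available after an \'etale base change $S_1\to S$ (Zariski-locally on $U$, Zariski's Main Theorem only gives an open immersion into a finite, generally non-\'etale, $S$-scheme), and once $S$ is replaced by $S_1$ you would have to apply the effective-epimorphism hypothesis to the base change $S'\times_S S_1\to S_1$. But effectivity is not known to be stable under \'etale base change: establishing such stability under extra hypotheses is precisely the point of Theorem~\ref{theo:eff_epi_of_spaces} and Corollary~\ref{coro:uniform_effective_epi}, and the present lemma is an input to them, so you cannot assume it here; keeping the test space over the original $S$ instead puts you back in the non-finite case above. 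The paper avoids all representability issues: it sets $\cB=\ker(\cA(S')\rightrightarrows\cA(S''))$, takes the single test space $T=\Spec_S(\cB)$, which is affine over $S$ and hence automatically an algebraic space, factors $f$ as $S'\xrightarrow{g}T\xrightarrow{h}S$, and applies effectivity to produce a section of $h$, while injectivity of $\cA(S)\to\cA(S')$ comes from Lemma~\ref{lemm:at_least}. Your equalizer argument goes through verbatim with this choice of test space, with no localization and no Weil restriction needed.
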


\begin{proof}
Let us simplify the notations by setting
$\cA^*=\cA(S^*)$ for $*\in\{\varnothing,',''\}$. Let $\cI$ be the
kernel of $\cA\to\cA'$ and let $\cB$ be the kernel of the pair of
arrows $\cA'\rightrightarrows\cA''$. We must prove that $\cA\to\cB$ is an
isomorphism. Since $f$ is quasi-compact and quasi-separated, the
sheaves $\cA$, $\cA'$, $\cA''$ are quasi-coherent hence the sheaves
$\cI$, $\cB$ are also quasi-coherent. According to
Lemma~\ref{lemm:at_least} we have
$\cI=0$. Let us write $T=\Spec_S(\cB)$.
We have injective sheaf morphisms $\cO_S=\cA\to\cB\to\cA'$ and
corresponding scheme morphisms $g:S'\to T$,
$h:T\to S$ satisfying $f=hg$. Let $p_1,p_2:S''\to S'$ be the
projections. Since $gp_1=gp_2$ and $f$ is effective, there is
a morphism $e:S\to T$ such that $g=ef=ehg$. As the sheaf map
$g^\sharp:\cB\to\cA'$ is injective, this implies that
$e^\sharp:\cB\to\cA$ is a section of the map $h^\sharp:\cA\to \cB$
which therefore is an isomorphism.
\end{proof}

This lemma shows that under the qcqs assumption, it is
necessary for an effective epimorphism of algebraic spaces
to give rise to an {\em exact} sequence of
$\cO_S$-modules $\cA(S)\to\cA(S')\rightrightarrows\cA(S'')$.
For the converse, in the world of schemes things are quite
simple: a submersion with the above exact sequence property
is an effective epimorphism, see \cite[Exp.~VIII, Prop.~5.1]{SGA1}.

In the world of algebraic spaces things are a bit
more subtle, and our purpose in the rest of this subsection
is to strengthen slightly the submersion
property so as to salvage the result. We recall that to say
that $f:S'\to S$ is a morphism of effective descent for \'etale
algebraic spaces means that for any two \'etale $S$-algebraic
spaces $X,Y$ the diagram

\[\Hom_S(X,Y)\to\Hom_{S'}(X',Y')\rightrightarrows \Hom_{S''}(X'',Y'')
\]
is exact, and that for every \'etale $S'$-algebraic space $X'$, every
descent datum on $X'$ with respect to $S'\to S$ is effective.

\begin{lemma} \label{lemm:eff_descent_etale_local_on_target}
Let $f:S'\to S$ be a morphism of algebraic spaces.
The property for $f$ to be a morphism
of effective descent for \'etale
algebraic spaces is local on the source and target for
the \'etale topology. Explicitly,
\begin{enumerate}
\itemn{1} if $T\to S$ is \'etale surjective,
$T'=T\times_SS'$,
and $f_T:T'\to T$ is the pullback of $f$, then $f$ is a morphism of
effective descent for \'etale algebraic spaces
if and only if $f_T$ is so; and
\itemn{2} if $g:S''\to S'$ is \'etale surjective, then $f$ is a morphism of
effective descent for \'etale algebraic spaces
if and only if $fg$ is so.
\end{enumerate}
\end{lemma}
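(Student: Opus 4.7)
The driving input is that the fibered category $\cE$ of \'etale algebraic spaces is itself a stack for the \'etale topology on $\AlgSp$: essentially by definition of algebraic spaces, any \'etale algebraic space over $S$ is obtained by \'etale descent from an \'etale cover of $S$. With this in hand, both parts reduce to formal manipulations of $2$-limits of categories. I record that effective descent for \'etale algebraic spaces along $f:S'\to S$ is the assertion that the pullback functor $\cE(S)\to \lim_n \cE({S'}^{(n)})$ is an equivalence of categories, where ${S'}^{(n)}$ denotes the $(n{+}1)$-fold fiber product of $S'$ over $S$ (the \v{C}ech nerve of $f$).

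For part~(1), let $T\to S$ be \'etale surjective, with iterated fiber products $T_{(m)}=T\times_S\cdots\times_S T$ ($m{+}1$ factors). Since $\cE$ is an \'etale stack, for every $S$-space $V$ one has $\cE(V)\simeq \lim_m \cE(T_{(m)}\times_S V)$. Applying this term-by-term to the cosimplicial diagram $\cE({S'}^{(\bullet)})$ and interchanging the two limits, effective descent for $f$ becomes equivalent to the condition that for every $m\ge 0$ the natural functor $\cE(T_{(m)})\to \lim_n \cE(T_{(m)}\times_S {S'}^{(n)})$ is an equivalence, that is, $f$ admits effective descent after base change along $T_{(m)}\to S$. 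As each $T_{(m)}\to T$ is itself \'etale, the same argument applied backwards reduces this family of conditions to the single one at $m=0$, namely effective descent for $f_T:T'\to T$.

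For part~(2), set $\mathrm{Desc}(f)=\lim_n \cE({S'}^{(n)})$ and analogously $\mathrm{Desc}(fg)$. The claim is that the obvious pullback functor $\mathrm{Desc}(f)\to \mathrm{Desc}(fg)$ is an equivalence; since it sits in a commutative triangle with $\cE(S)\to \mathrm{Desc}(f)$ and $\cE(S)\to \mathrm{Desc}(fg)$, the statement in~(2) follows. To build a quasi-inverse, start with $X''\in \cE(S'')$ carrying a descent datum for $fg$. Restriction of the datum to $S''\times_{S'}S''\subset S''\times_S S''$ is a descent datum for the \'etale cover $g$, and the stack property of $\cE$ produces $X'\in \cE(S')$ with $g^*X'\simeq X''$. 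The remaining $fg$-data on $X''$ then assembles, by \'etale descent of morphisms between \'etale spaces, into an $f$-descent datum on $X'$.

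\textbf{Main obstacle.} The only delicate point is bookkeeping: tracking cocycle conditions under pullback and \'etale refinement, and justifying the interchange of $2$-limits that powers the reduction in~(1). Once the \'etale stack property of $\cE$ is in hand the argument is essentially formal, and I expect the proof to reduce to carefully verifying these compatibilities, citing standard descent results (e.g.\ in the Stacks Project) for the formal manipulations.
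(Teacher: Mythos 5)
Your part (2) is essentially the standard argument (refine the descent datum along the \'etale covering $g$, descend the object by the stack property, then descend the isomorphism and check the cocycle after pullback along an \'etale surjection); this is in substance what Giraud's results, cited by the paper, prove, and apart from not addressing full faithfulness of $\mathrm{Desc}(f)\to\mathrm{Desc}(fg)$ explicitly it is fine. The problem is part (1). After interchanging the two $2$-limits, what you actually get for free is only one implication: if \emph{every} base change $f_{T_{(m)}}$ is of effective descent, then the induced functor on limits, i.e.\ $\cE(S)\to\lim_n\cE({S'}^{(n)})$, is an equivalence. The converse is not formal: a $2$-limit of functors can be an equivalence without each term being one, so ``effective descent for $f$'' is \emph{not} equivalent, by this bookkeeping alone, to the family of conditions indexed by $m$. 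In particular the direction ``$f$ of effective descent $\Rightarrow$ $f_T$ of effective descent'' is not produced by your manipulation at all.

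This missing direction is exactly what makes your reduction of the family $\{m\ge 0\}$ to the single condition $m=0$ circular: to pass from effective descent for $f_T$ to effective descent for $f_{T_{(m)}}$ you need stability of the effective-descent property under the \'etale base changes $T_{(m)}\to T$, which is precisely statement (1) in the forward direction. That stability is a genuine theorem, not a formality --- effective descent morphisms are in general not preserved by base change (this is why ``universal effective descent'' is a separate notion) --- and the paper proves it by a direct argument: \'etale $T$-spaces are in particular \'etale $S$-spaces, so morphisms and descent data relative to $T'\to T$ can be regarded as data relative to $S'\to S$, descended using the hypothesis on $f$, and then checked to live over $T$. Once that direction is available, the converse direction can be run along the lines you and the paper both use (the paper invokes it for the base changes along $T\times_S T\to S$ and $T\times_S T\times_S T\to S$ to descend the glueing isomorphisms to a descent datum for the covering $T\to S$). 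So your proposal needs this forward implication supplied by a separate argument or a citation (e.g.\ Giraud); as written, part (1) has a gap.
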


\begin{proof}
(1) In one direction, assume $f:S'\to S$ is a morphism of
effective descent for \'etale algebraic spaces,
and let $T\to S$ be an \'etale base change.
Let $T'=T\times_S S'$ and $T''=T'\times_TT'=T\times_S S''$.
We prove that $f_T:T'\to T$ descends morphisms. Let $X,Y$
be two \'etale $T$-algebraic spaces. We prove that the diagram
\begin{equation}\def\theequation{$\star$}
\Hom_T(X,Y)\to\Hom_{T'}(X',Y')\rightrightarrows \Hom_{T''}(X'',Y'')
\end{equation}
is exact. Note that $X\to T\to S$ is \'etale and similarly for the
other algebraic spaces. Since $f$ descends morphisms between \'etale
spaces, we obtain an exact diagram
\[
\Hom_S(X,Y)\to\Hom_{S'}(X',Y')\rightrightarrows \Hom_{S''}(X'',Y'').
\]
Injectivity of the first map of ($\star$) now follows from the injectivity
of the maps $\Hom_T(X,Y)\to \Hom_S(X,Y)$ and $\Hom_S(X,Y)\to \Hom_S(X',Y')$.
Let
$u':X'\to Y'$ be a $T'$-morphism such that its pullbacks under
the maps $T''\rightrightarrows T'$ coincide. The second exact sequence
provides an $S$-morphism $u:X\to Y$. Moreover
if $a:X\to T$, $b:Y\to T$ are the structure morphisms, we see
that $a$ and $bu$ become equal when pulled back to $S'$, hence
they are equal. This shows that $u$ is in fact a map of $T$-algebraic
spaces.
Finally we prove effective descent for objects. Let $X'\to T'$
be an \'etale algebraic space with a descent datum with respect
to $T'\to T$. Then $X'\to T'\to S'$ is \'etale and moreover
the descent datum can be viewed as a descent datum with respect
to $S'\to S$. By the assumption on~$f$ there exists an \'etale morphism
$X\to S$ whose pullback under $S'\to S$ is $X'$. Moreover the map
$X'\to T'$ descends to an $S$-map $X\to T$ and the construction of
$X$ is finished.

The other direction is a special case of~\cite[Thm.~10.8]{Gi64} but for
the convenience of the reader we give the argument here. Let $T\to S$ be
\'etale surjective and assume
that the base change $f_T:T'\to T$ is of effective descent for
\'etale algebraic spaces. We prove descent of morphisms
for $f$. Let $X,Y$ be \'etale spaces over
$S$, let $X',Y'$ be the pullbacks to $S'$, and let
$u':X'\to Y'$ be an $S'$-morphism whose pullbacks via the
two maps $S'\times_SS'\rightrightarrows S'$ coincide. Then the map
$u'_T$ obtained by the base change $T'\to S'$ has
coinciding pullbacks via the two maps $T'\times_TT'\rightrightarrows T'$.
Since $f_T$ descends morphisms, $u'_T$ descends to a $T$-map
$u_T:X_T\to Y_T$. Let us introduce some notation:
\[
\xymatrix{
T'\times_{S'}T' \ar@<.5ex>[r]^-{q_1} \ar@<-.5ex>[r]_-{q_2}
\ar[d]^{f_{T\times_S T}}
& T' \ar[r] \ar[d]^{f_T} & S' \ar[d]^f \\
T\times_ST \ar@<.5ex>[r]^-{p_1} \ar@<-.5ex>[r]_-{p_2}
& T \ar[r] & S.
}
\]
From the first part, we know that $f_{T\times_ST}$ is a morphism
of (effective) descent.
From the equality $q_1^*u'_T=q_2^*u'_T$ we thus deduce that
$p_1^*u_T=p_2^*u_T$. By descent along the \'etale map $T\to S$,
we obtain a unique $S$-map $u:X\to Y$ that descends $u'$.
Now we prove effective descent for objects.
Let $X'\to S'$ be an \'etale morphism equipped with a descent
datum for $S'/S$. The pullback $X'_T\to T'$ has a descent datum
for $T'/T$. By assumption it descends to $X_T\to T$. The
canonical isomorphism $q_1^*X'_T\to q_2^*X'_T$ descends to an isomorphism
$\psi\colon p_1^*X_T\to p_2^*X_T$ since $f_{T\times_S T}$ is a morphism of
descent.
Using that $f_{T\times_S T\times_S T}$ is a morphism of descent,
one checks that $\psi$ is a descent datum on
$X_T$ for the \'etale covering $T\to S$ and by effective descent,
it descends to a unique $X\to S$ as desired.

(2) This is a special case of~\cite[Props.~10.10 and 10.11]{Gi64}.
\end{proof}

The next theorem is our main result on effective epimorphisms
of algebraic spaces. We write {\em qcqs} for {\em quasi-compact and
quasi-separated}. In the world of schemes, a qcqs submersion
such that $\cA(S)\to\cA(S')\rightrightarrows\cA(S'')$ is exact is an effective
epimorphism. In the world of algebraic spaces, we reinforce these
conditions slightly in order to suitably allow \'etale
localization and descent.

\begin{theorem} \label{theo:eff_epi_of_spaces}
Let $f:S'\to S$ be a morphism of algebraic spaces. Assume that:
\begin{trivlist}
\itemn{1} $f$ is a qcqs
submersion and remains so after every \'etale base change,
\itemn{2} the diagram of $\cO_S$-modules
$\cA(S)\to\cA(S')\rightrightarrows\cA(S'')$ is exact,
\itemn{3} $f$ is a morphism of effective descent for \'etale
algebraic spaces.
\end{trivlist}
Then $f$ is an effective epimorphism of algebraic spaces and
remains so after any \'etale base change.
\end{theorem}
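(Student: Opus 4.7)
The plan is to reduce to proving that $f$ alone is an effective epimorphism, since stability under étale base change will then follow from the stability of the three hypotheses: (1) is stable under étale base change by assumption, (2) is preserved because étale pullback is exact on quasi-coherent sheaves and commutes with $f_*$ by flat base change, and (3) is étale local on $S$ by Lemma \ref{lemm:eff_descent_etale_local_on_target}(1). The uniqueness half of effectivity comes at once: hypothesis (2) forces $\cA(S) \to \cA(S')$ to be injective, so $f$ is schematically dominant, and Lemma \ref{lemm: e-submersion is epi} shows $f$ is an epimorphism that remains one after every étale base change.

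For essential surjectivity, I would take $u' : S' \to X$ with $u'p_1 = u'p_2$ and build a candidate lift after a well-chosen étale cover of $S$. Pick an étale surjection $V \to X$ from a scheme $V$ with an affine open cover $V = \bigcup V_i$, $V_i = \Spec(B_i)$, and set $V' := V \times_X S'$, which is étale over $S'$. The equality $u' p_1 = u' p_2$ canonically identifies both pullbacks $p_j^* V'$ with $V \times_X S''$, producing a descent datum on $V' \to S'$ along $f$; the cocycle on $S' \times_S S' \times_S S'$ is automatic since all three projections to $X$ coincide. Hypothesis (3) then yields an étale morphism $T \to S$ with an isomorphism $T \times_S S' \cong V'$ compatible with descent data.

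The main obstacle will be descending the projection $\varphi : V' \to V$ itself to a morphism $g : T \to V$. The open subspaces $V'_i := V_i \times_X S' \subset V'$ are saturated under $f_T : T \times_S S' \to T$ because the cocycle forces $p_1^{-1}(V'_i) = V_i \times_X S'' = p_2^{-1}(V'_i)$; the submersion property in hypothesis (1) then provides opens $U_i \subset T$ with $f_T^{-1}(U_i) = V'_i$. On each $U_i$, descending $V'_i \to V_i$ reduces to descending the ring homomorphism $B_i \to \cO(V'_i)$, which lands in $\cO(U_i)$ by restricting the exact sequence of hypothesis (2) to the open $U_i$ (legitimate since the sheaves $\cA$ are quasi-coherent). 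The pieces $U_i \to V_i$ agree on overlaps because $f$ remains an epimorphism after étale base change to $U_i \cap U_j$, hence they glue to $g : T \to V$ whose base change under $T \times_S S' \cong V' \to V$ recovers $\varphi$.

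The final step is to descend $u_T := (V \to X) \circ g : T \to X$ along the étale surjection $T \to S$. Since $X$ is a sheaf for the étale topology, this reduces to checking that the two pullbacks $T \times_S T \rightrightarrows X$ agree (the cocycle on $T \times_S T \times_S T$ then follows identically). I would test this equality after pullback along the epimorphism $T \times_S T \times_S S' \to T \times_S T$: by construction of $g$, the composition $u_T \circ (T \times_S S' \to T)$ equals $u' \circ (T \times_S S' \to S')$, so both pullbacks collapse to $u'$ precomposed with the projection $T \times_S T \times_S S' \to S'$. Hence $u_T$ descends to the desired $u : S \to X$, and the identity $u f = u'$ is verified after pullback along the étale cover $T \times_S S' \to S'$, completing the argument.
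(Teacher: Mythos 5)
Your proof is correct and follows essentially the same route as the paper's: stability of all three hypotheses under \'etale base change, uniqueness via Lemma~\ref{lemm: e-submersion is epi}, descent of the pulled-back \'etale scheme cover via hypothesis (3), construction of the map to that cover using submersivity (saturated opens) together with the exact sequence of hypothesis (2) on affine pieces, and final descent along the \'etale cover of $S$ by testing equality against an epimorphism of the form (base change of $f$). The only difference is organizational: the paper first establishes the statement for scheme-valued targets and then applies it to the base-changed morphism $U'\to U$, whereas you inline that step by working directly with an affine open cover of $V$ and gluing.
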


\begin{proof}
By the lemma, all three assumptions are stable by \'etale
base change on $S$. Therefore it is sufficient to prove
that $f$ is an effective epimorphism of algebraic spaces,
i.e., for all algebraic spaces $X$, the diagram
$X(S)\to X(S')\rightrightarrows X(S'')$ is exact. 
Note that after Lemma~\ref{lemm: e-submersion is epi}
we know that $f$ is an epimorphism after every \'etale base
change, which settles injectivity on the left. It remains
to prove that if $\alpha':S'\to X$
satisfies $\alpha'\pr_1=\alpha'\pr_2$ then there exists
$\alpha:S\to X$ such that $\alpha'=\alpha f$.

We prove that the question is Zariski-local on $X$. Let
$(X_i)$ be a covering of $X$ by open subspaces and let
$S'_i=(\alpha')^{-1}(X_i)$. Then $S'_i$ is saturated, that
is $S'_i=f^{-1}(f(S'_i))$. Since~$f$ is a submersion
by~(1),
then $S'_i$ descends to an open subspace $S_i\subset S$.
If for each $i$ there exists $\alpha_i:S_i\to X_i\subset X$
such that $\alpha'{}_{|S'_i}=\alpha_i f_{|S'_i}$ then by
uniqueness the morphisms
$\alpha_i$ glue to give a solution $\alpha:S\to X$.

We prove that the question has a positive answer when $X$
is a scheme. Indeed, we can cover~$X$ by open affine
subschemes and then by the preceding step we can reduce
to the case where $X=\Spec(A)$ is affine.
Since $\Hom(T,\Spec(A))=\Hom(A,\Gamma(T,\cO_T))$ for all
algebraic spaces $T$ (see
\cite[\spref{05Z0}]{SP}),
the question reduces to a construction of ring homomorphisms and
then the conclusion comes from assumption~(2).

Now let $X$ be an arbitrary algebraic space.
Let $\pi:Y\to X$ be an \'etale surjective morphism where $Y$
is a scheme. Let $U'=Y\times_X S'$ which is \'etale surjective
over $S'$, and $U''=Y\times_X S''$. The assumption
$\alpha'\pr_1=\alpha'\pr_2$ implies
that $U'$ carries a descent datum. By assumption (3) it
descends to an \'etale algebraic space $U\to S$. Also let
$\beta':U'\to Y$ be the pullback of
$\alpha'$. Let $R=U\times_S U$ and $R'=U'\times_{S'} U'$.
\[
\xymatrix{
& R' \ar@<.5ex>[d]^-{t'} \ar@<-.5ex>[d]_-{s'} \ar[r]
& R \ar@<.5ex>[d]^-t \ar@<-.5ex>[d]_-s & \\
U'' \ar@<.5ex>[r] \ar@<-.5ex>[r] \ar[d]^c
& U' \ar@{-->}[r] \ar[d]^d \ar@/_1pc/[rr]_(.7){\beta'}
& U \ar@{-->}[r]^{\beta} \ar@{-->}[d] & Y \ar[d]^{\pi} \\
S'' \ar@<.5ex>[r] \ar@<-.5ex>[r] & S' \ar[r] \ar@/_1pc/[rr]_(.7){\alpha'} & S & X}
\]
We know $\beta'\pr_1=\beta'\pr_2:U''\to Y$.
Since $U'\to U$ satisfies again all the assumptions (1)--(3)
and the statement holds when the test space $Y$ is a scheme,
we obtain a morphism $\beta:U\to Y$. We claim that
$\pi\beta:U\to X$ is $R$-invariant. Since $R'\to R$ is an \'etale
pullback of $f:S'\to S$, it is an epimorphism.
Hence it is enough to prove that the compositions
$R'\to R \rightrightarrows U \to X$ are equal. This follows because
they equal to $\alpha'ds'=\alpha'dt'$.
Thus $\pi\beta$ induces a morphism
$\alpha:S\to X$ and we are done.
\end{proof}


Collecting some results on morphisms of effective descent for \'etale
maps in the literature, we find the following special cases.

\begin{corollary} \label{coro:cases_of_eff_descent}
Let $f:S'\to S$ be a surjective morphism of algebraic spaces
which is either~:
\begin{trivlist}
\itemm{i} integral,
\itemm{ii} proper,
\itemm{iii} universally open and locally of finite presentation,
\itemm{iv} universally submersive and of finite presentation with
$S$ locally noetherian.
\end{trivlist}
Then if the sequence of modules $\cA(S)\to\cA(S')\rightrightarrows\cA(S'')$ is exact,
the map $f$ is an effective epimorphism of algebraic spaces and remains
so after any flat base change.
\end{corollary}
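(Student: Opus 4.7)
The plan is to deduce the corollary from Theorem~\ref{theo:eff_epi_of_spaces} by verifying its three hypotheses in each of the four cases, and then to handle the stability statement separately. Condition~(2) of the theorem is exactly the exactness assumption of the corollary, so the real work is to check condition~(1) (qcqs submersion stable under étale base change) and condition~(3) (effective descent for étale algebraic spaces) in each of (i)--(iv), and then to show that flat base change preserves all the hypotheses.

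For condition~(1), I would argue that in each of cases (i)--(iv) the map $f$ is universally submersive, and in particular remains submersive after every étale base change: an integral or proper surjection is universally closed, and combined with surjectivity this gives universal submersion; a universally open surjection is trivially universally submersive; and in~(iv) it is assumed. The qcqs condition is immediate in each case (integral $\To$ affine; proper $\To$ qcqs; (locally of) finite presentation $\To$ (locally) qcqs); in case~(iii) the quasi-compactness is implicit in the assumption that the sequence of~(2) makes sense as an exact sequence of quasi-coherent $\cO_S$-modules. For condition~(3), the fact that $f$ is of effective descent for étale algebraic spaces in each case is a known result: integral surjections and proper surjections are classical (see \cite{SGA1}), while the cases of universally open morphisms locally of finite presentation and of universally submersive morphisms of finite presentation over a locally noetherian base are due to Rydh in~\cite{Ry10}. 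Locating the sharpest published statement in each case will be the main bookkeeping obstacle, but no new idea is required.

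For the last claim, let $T\to S$ be flat and set $T'=T\times_S S'$, $T''=T\times_S S''$. Each of the properties (i)--(iv) is stable under arbitrary base change (with the caveat that noetherianness in~(iv) may be lost under a flat base change to a non-noetherian target; in that case the base-changed morphism $f_T$ still satisfies one of (i)--(iii), or condition~(3) can be checked directly via étale-local descent using Lemma~\ref{lemm:eff_descent_etale_local_on_target}). Since flat base change is exact and commutes with pushforward along qcqs morphisms, it also preserves the exactness of the sequence $\cA(S)\to\cA(S')\rightrightarrows\cA(S'')$. Thus all hypotheses of the corollary hold for $f_T$, and applying Theorem~\ref{theo:eff_epi_of_spaces} to $f_T$ yields that $f_T$ is an effective epimorphism of algebraic spaces, completing the proof.
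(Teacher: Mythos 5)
Your overall strategy is the same as the paper's: take Theorem~\ref{theo:eff_epi_of_spaces}, observe that its condition~(2) is the hypothesis of the corollary, verify conditions~(1) and~(3) in each of the cases (i)--(iv), and then treat stability under flat base change by noting that flat pullback is exact, commutes with pushforward along qcqs morphisms, and preserves the hypotheses. Your verification for $f$ itself is essentially correct, up to the bookkeeping you acknowledge: for condition~(3) one should, as the paper does, first use Lemma~\ref{lemm:eff_descent_etale_local_on_target} to pass to \'etale atlases and reduce to a morphism of schemes, and then quote \cite[Exp.~VIII, Thm.~9.4]{SGA4.2} in cases (i)--(ii) and \cite[Thm.~A.2]{Ry13} in cases (iii)--(iv).

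The genuine gap is in your treatment of case~(iv) after a flat base change $T\to S$ with $T$ not locally noetherian. Your first fallback, that $f_T$ ``still satisfies one of (i)--(iii)'', is false in general: a universally submersive surjection of finite presentation need not be integral, proper, or universally open (for instance the disjoint union of the two standard charts of the blow-up of $\AA^2$ at the origin, mapped down to $\AA^2$, is universally submersive and of finite presentation, but the image of one chart is neither open nor closed, so the map is neither universally open nor universally closed), and these properties are not created by the base change when they fail for $f$. Your second fallback, to ``check condition~(3) directly via \'etale-local descent'', does not close the gap either: Lemma~\ref{lemm:eff_descent_etale_local_on_target} only reduces to a morphism of schemes, and one then still needs effective descent of \'etale morphisms along a universally submersive, finitely presented morphism over a possibly non-noetherian base --- a statement that is not available, since the known theorems in this generality require universal \emph{subtrusiveness}. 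This is exactly the difficulty the paper circumvents: it replaces (iv) by the base-change-stable condition (iv)' ``universally subtrusive and of finite presentation'', which is implied by (iv) over a locally noetherian base by \cite[Cor.~2.10]{Ry10} and is covered by \cite[Thm.~A.2]{Ry13}; only then does it suffice to prove that $f$ itself is an effective epimorphism. Without this substitution (or an equivalent device), your proof of the ``remains so after any flat base change'' clause in case~(iv) does not go through.
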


\begin{proof}
In each case the assumptions are stable under base change,
except possibly in case~(iv). To deal with this, we use the notion
of a {\em subtrusive} morphism from \cite{Ry10} and
we replace (iv) with the more general (iv)'~: universally subtrusive and
of finite presentation. That this is indeed more general than (iv)
follows from \cite[Cor.~2.10]{Ry10}, with the advantage that (iv)'
is stable under base change. It follows that it is enough to prove
that $f$ is an effective epimorphism of algebraic spaces. For this
we apply Theorem~\ref{theo:eff_epi_of_spaces}. In each case
conditions~(1) and (2) hold and it remains to see that $f$ is of
effective descent for \'etale algebraic spaces. Since by
\ref{lemm:eff_descent_etale_local_on_target} this property
is \'etale-local on source and target, by taking \'etale atlases of $S$
and $S'$
one reduces to the case where $f$ is a map of schemes. Then the claim
is \cite[Exp.~VIII, Thm.~9.4]{SGA4.2} in cases (i)--(ii)
and \cite[Thm.~A.2]{Ry13} in cases (iii)--(iv)'.
\end{proof}

\begin{remark}
Assume that $f$ satisfies one of the
conditions (i)--(iv). Then the property ``$f$ is an effective
epimorphism'' is fpqc-local on $S$ because exactness of a sequence
of quasi-coherent modules is an fpqc-local condition.
\end{remark}

For ease of future reference, we single out the following
particular case of~\ref{coro:cases_of_eff_descent}. Recall
that an epimorphism is {\em uniform} if it remains an
epimorphism after all flat base changes.

\begin{corollary} \label{coro:uniform_effective_epi}
Let $f:S'\to S$ be an integral morphism of algebraic spaces such
that the sequence $\cA(S)\to\cA(S')\rightrightarrows \cA(S'')$ is exact.
Then $f$ is a uniform effective epimorphism of algebraic spaces.
\hfill $\square$
\end{corollary}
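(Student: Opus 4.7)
The plan is to recognize this statement as the integral case of Corollary~\ref{coro:cases_of_eff_descent}(i), with the only caveat that the hypothesis of surjectivity used there is not stated in the present assumptions. So the work reduces to checking that surjectivity follows automatically, after which the cited corollary does all the rest.

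First I would establish that $f$ is surjective. An integral morphism of algebraic spaces is universally closed, so the image $f(|S'|)$ is a closed subset of $|S|$. On the other hand, the exactness of $\cA(S)\to\cA(S')\rightrightarrows\cA(S'')$ on the left says that $\cO_S\to f_*\cO_{S'}$ is injective, i.e., that $f$ is schematically dominant; in particular $f(|S'|)$ is topologically dense in $|S|$. A closed and dense subset of $|S|$ must be the whole space, so $f$ is surjective.

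Having surjectivity, I would then invoke Corollary~\ref{coro:cases_of_eff_descent}(i): the hypothesis ``integral and surjective'' is met, and the exactness of $\cA(S)\to\cA(S')\rightrightarrows\cA(S'')$ is exactly the remaining input. The conclusion of that corollary is that $f$ is an effective epimorphism of algebraic spaces which remains so after every flat base change, which is precisely the definition of uniform effective epimorphism. The only genuine step is the surjectivity verification above; the remainder is direct citation of the already-established corollary.
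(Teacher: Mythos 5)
Your proposal matches the paper's own treatment: the corollary is stated there precisely as a special case of Corollary~\ref{coro:cases_of_eff_descent}(i), with no further argument given. Your additional verification that surjectivity is automatic (closed image since $f$ is integral, plus schematic dominance from left-exactness, noting $f$ is quasi-compact) correctly fills in the one detail the paper leaves implicit.
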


\begin{examples} \label{examples:effective_ring_extensions}
Here are some sufficient conditions for a morphism
$f:\Spec(A')\to \Spec(A)$ defined by a finite ring extension
$A\subset A'$ to be an effective epimorphism.
\begin{trivlist}
\itemn{1} $f$ is faithfully flat (faithfully flat descent).
\itemn{2} $f$ is the quotient of a flat groupoid (by the quotient
property).
\itemn{3} $f$ is unramified with fiber-degree at most 2.
Indeed, by the structure of unramified morphisms, \'etale-locally
on the target the morphism $f$ has the form
$\Spec(A/I)\amalg\Spec(A/J)\to \Spec(A)$. Hence we
may assume that $A'=A/I\times A/J$ with $I\cap J=0$, so that
$A'\otimes_AA'=(A/I)\times (A/I+J)\times (A/I+J)\times (A/J)$.
To say that $a'=(a_1,a_2)\in A'$ has equal images in
$A'\otimes_AA'$ means that $a_1\equiv a_2\mod I+J$, hence
$a_1+i=a_2+j$ for some $i\in I$, $j\in J$. Thus $a'\in A$.
\itemn{4} Levelt \cite{Le65} contains some more examples.
For instance if $A\subset A'$ is a local inclusion of local rings
with trivial residue field extension
and no intermediate subring then $f$ is effective
\cite[Chap.~IV, Lem.~4]{Le65}. If for some maximal ideal
$m\subset A$ we have $A'/A\simeq A/m$ as $A$-modules, then $f$
is effective \cite[Chap.~IV, Lem.~7]{Le65}.
\itemn{5} $f$ is weakly normal, e.g., $A$ and $A'$ are integral domains, $f$ is
generically \'etale and $A$ is weakly normal \cite[Lem.~B.5]{Ry10}.
\end{trivlist}
\end{examples}

Here is a non-example showing that $d=2$ is required in (3) above.
\begin{exam}
Let $A=k[x,y]/\bigl(xy(y-x)\bigr)$ and $A'=A/(x)\times A/(y)\times
A/(y-x)$. Then $f:\Spec(A')\to \Spec(A)$ is finite and unramified of
fiber-degree at most $3$ but not an effective epimorphism.  Indeed,
$A'\otimes_A A'=A'\times k^6$ is reduced so the equalizer of the two maps
$A'\to A'\otimes_A A'$ is the weak subintegral closure~\cite[Lem.~B.5]{Ry10}
which is isomorphic to $B=k[u,v,w]/(u,v)(u,w)(v,w)$. Explicitly, we have
injective maps $A\to B$ and $B\to A'$ where $x\mapsto u+v$, $y\mapsto u+w$ and
$u\mapsto (0,0,x)$, $v\mapsto (0,x,0)$, $w\mapsto (y,0,0)$.
\end{exam}

\subsection{The canonical factorization}

The main result of this section gives a canonical
factorization of a finite epimorphism as a composition of
finitely many finite effective epimorphisms. It is first stated
in \cite[A.2.b]{Gr59} and then used to study
the functor of subgroups of multiplicative type of a group scheme
\cite[Exp.~XV, just before Lem.~3.7]{SGA3.2} and the relative
representability of the Picard functor \cite[Exp.~XII,
Lem.~2.6]{SGA6}. A proof appears in the latter reference.
With an eye towards
the study of groupoids of higher complexity, we provide
additional properties of the canonical factorization~:
uniqueness, compatibility with flat base change, minimality
of its length. For the convenience of the reader, we provide
complete proofs.

\begin{definitions}
Let $f:T\to S$ be an epimorphism of algebraic spaces.
\begin{trivlist}
\itemn{1}
An {\em $f$-sequence} is a sequence
$T=T_0\to T_1\to T_2\to\dots$ of epimorphisms of $S$-spaces
such that for each $i\ge 0$, if $T_i\to T_{i+1}$ is an isomorphism
then $T_{i+1}\to T_{i+2}$ is an isomorphism.
\itemn{2} The {\em length} of an $f$-sequence as above is the
smallest $n\in\NN\cup\{\infty\}$ such that $T_n\to T_{n+1}$
is an isomorphism, i.e., the number of non-isomorphic arrows
of the sequence. If an $f$-sequence has finite length~$n$ and
$T_n\to S$ is an isomorphism, we say that it is
{\em finite and separated} or that it is a {\em factorization}.
\itemn{3} Assume that $f$ is affine. The {\em canonical sequence}
of $f$ is the $f$-sequence $T=T_0\to T_1\to T_2\to\dots$ given
by $T_i=\Spec_S(\cA_i)$ where $\cA_0:=f_*\cO_T$ and
$\cA_{i+1}:=\ker(\cA_i\rightrightarrows \cA_i\otimes_{\cO_S} \cA_i)$
for all $i\ge 0$.
\end{trivlist}
\end{definitions}

\begin{prop} \label{prop:canonical_factorization}
Let $f:T\to S$ be an integral epimorphism of algebraic spaces.
\begin{trivlist}
\itemn{1} The canonical sequence
$T=T_0\to T_1\to T_2\to\dots$
is characterized by the properties~:
\begin{trivlist}
\itemm{i} for each $i$, the morphism $T_i\to S$ is integral and
the morphism $T_i\to T_{i+1}$ is an
integral effective epimorphism;
\itemm{ii} for each $i$ the canonical morphism
$T_i\times_{T_{i+1}} T_i\to T_i\times_S T_i$ is an isomorphism.
\end{trivlist}
\itemn{2} The formation of the canonical sequence is compatible
with flat base change and local for the flat topology on~$S$.
More precisely, let $S'\to S$ be a faithfully flat morphism
of schemes.
Let $\sT=(T_0\to T_1\to T_2\to\dots)$ be a sequence of morphisms
of $S$-schemes and let $\sT'=(T'_0\to T'_1\to T'_2\to\dots)$ be
the sequence obtained by the base change $S'\to S$. Then $\sT$
is the canonical sequence of $T\to S$ if and only if $\sT'$
is the canonical sequence of $T'\to S'$.
\itemn{3} The canonical sequence has length~$0$ if and only if $f$ is
an isomorphism, and length at most $1$ if and only
if $f$ is an effective epimorphism. 
\itemn{4} The canonical sequence is terminal among $f$-sequences, affine
over $S$,
whose factors are effective epimorphisms, i.e., for each such sequence
$T=T'_0\to T'_1\to T'_2\to\dots$ there are maps $T'_i\to T_i$
making a commutative diagram:
\[
\xymatrix{
T'_0 \ar[r] \ar[d]_{\id_T} & T'_1 \ar[r] \ar[d]
& T'_2 \ar[r] \ar[d] & \dots \\
T_0 \ar[r] & T_1 \ar[r] & T_2 \ar[r] & \dots}
\]
\itemn{5} When $S$ is noetherian and $f$ is finite, the morphisms
$T_i\to T_{i+1}$ are finite and the canonical sequence
is finite and separated, i.e., a factorization. It has minimal
length among all finite separated $f$-sequences whose
factors are effective epimorphisms.
\end{trivlist}
\end{prop}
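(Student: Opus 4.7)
The plan is to verify the five assertions systematically, using the characterization from Lemma~\ref{lemma:effective_epi_and_functions} and Corollary~\ref{coro:uniform_effective_epi} that an integral morphism $g:T\to S$ is an effective epimorphism iff $\cO_S\to g_*\cO_T\rightrightarrows g_*\cO_{T\times_S T}$ is exact. For (1), property (ii) holds automatically for the canonical sequence: the kernel of the natural surjection $\cA_i\otimes_{\cO_S}\cA_i\twoheadrightarrow\cA_i\otimes_{\cA_{i+1}}\cA_i$ is generated by expressions $c\otimes 1-1\otimes c$ for $c\in\cA_{i+1}$, and these vanish by the very definition of $\cA_{i+1}$ as the equalizer. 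Rewriting the defining exact sequence as $\cA_{i+1}\to\cA_i\rightrightarrows\cA_i\otimes_{\cA_{i+1}}\cA_i$ and applying Corollary~\ref{coro:uniform_effective_epi} then gives (i); integrality is clear since $\cA_i\subset\cA_0$ is integral over $\cO_S$. Uniqueness follows because any sequence satisfying (i)--(ii) is forced by $\cO_{T_{i+1}}=\ker(\cO_{T_i}\rightrightarrows\cO_{T_i\times_{T_{i+1}}T_i})=\ker(\cO_{T_i}\rightrightarrows\cO_{T_i\times_S T_i})$, reproducing the canonical definition. Part (2) is immediate since flat base change commutes with kernels and tensor products, with locality in the flat topology following by faithfully flat descent.

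For (3), length $0$ is the statement that the two coproduct maps $\cA_0\rightrightarrows\cA_0\otimes_{\cO_S}\cA_0$ agree, equivalently that $f$ is a monomorphism. To deduce that $f$ is an isomorphism from being simultaneously an integral epimorphism and a monomorphism, I reduce to the affine case, tensor the short exact sequence $0\to\cO_S\to\cA_0\to\cA_0/\cO_S\to 0$ on the right with $\cA_0$, and use that the multiplication $\cA_0\otimes_{\cO_S}\cA_0\to\cA_0$ is an isomorphism: the outcome is $\cA_0\otimes_{\cO_S}(\cA_0/\cO_S)=0$. Localizing at any maximal ideal $\fm$ and reducing modulo $\fm$, the fibre $\cA_{0,\fm}/\fm\cA_{0,\fm}$ is nonzero because $\Spec\cA_0\to\Spec\cO_S$ is surjective (integrality plus injectivity), so the vector-space tensor over $k(\fm)$ vanishes only if $(\cA_0/\cO_S)_\fm/\fm=0$; Nakayama applied to cyclic submodules of $\cA_0/\cO_S$ together with a standard annihilator argument then forces $\cA_0/\cO_S=0$. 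The equivalence length $\le 1\iff f$ effective epi follows by applying the length-$0$ case to the integral epimorphism $T_1\to S$ (which is a monomorphism precisely when $T_1\to T_2$ is an isomorphism) and matching with $\cA_1=\cO_S$ via Lemma~\ref{lemma:effective_epi_and_functions}.

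For (4), I induct on $i$: given $T'_i\to T_i$, property (ii) exhibits $T_{i+1}$ as the coequalizer of $T_i\times_S T_i\rightrightarrows T_i$ among affine $S$-schemes, and the composition $T'_i\to T_i\to T_{i+1}$ coequalizes $T'_i\times_{T'_{i+1}}T'_i\rightrightarrows T'_i$ through the natural map to $T_i\times_S T_i$, so the effective epimorphism property of $T'_i\to T'_{i+1}$ yields the required $T'_{i+1}\to T_{i+1}$. For (5), when $S$ is noetherian and $f$ is finite, $\cA_0$ is coherent over $\cO_S$ and the descending chain $\cA_0\supset\cA_1\supset\cdots$ produces an ascending chain of coherent quotients $\cA_0/\cA_i$, which stabilizes by noetherianity; part (3) applied to the integral epimorphism $T_n\to S$ at stabilization forces $T_n=S$. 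For minimality, (4) provides inclusions $\cA_m\subset\cO_{T'_m}$ for any competing factorization of length $m$; if $T'_m=S$ the tautological reverse inclusion $\cO_S\subset\cA_m$ forces $\cA_m=\cO_S$, so the canonical sequence terminates by step $m$. The main obstacle is the monomorphism-plus-epimorphism-equals-isomorphism step in (3), since the integral non-flat setting blocks a direct flatness argument; it is resolved by the fibrewise Nakayama computation above, exploiting that integrality ensures every fibre of $\cA_0$ over $\cO_S$ is nonzero.
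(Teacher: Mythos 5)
Parts (1), (2), (4) and the minimality half of (5) of your proposal are correct and essentially identical to the paper's own arguments (same use of the kernel of $\cA_i\otimes_{\cO_S}\cA_i\to\cA_i\otimes_{\cA_{i+1}}\cA_i$ for (ii), of Corollary~\ref{coro:uniform_effective_epi} and Lemma~\ref{lemma:effective_epi_and_functions} for (i) and for uniqueness, flat base change of kernels for (2), and the inclusion $\cO_S\subset\cA_m\subset\cA'_m$ for minimality). The problems are in the two places where you go beyond the paper's terse treatment, and both are genuine gaps. First, in (5): the assertion that the chain $\cA_0\supset\cA_1\supset\cdots$ stabilizes ``by noetherianity'' via ``an ascending chain of coherent quotients'' is not a valid argument. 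The quotients $\cA_0/\cA_i$ form a tower of surjections, not an ascending chain of subsheaves of a fixed coherent sheaf, and noetherian hypotheses give ACC on coherent subsheaves, not DCC (compare $(x)\supset(x^2)\supset\cdots$ in $k[x]$). The stabilization is precisely the nontrivial content of Grothendieck's lemma; the paper proves it by localizing, inducting on $\dim S$, showing $\cA\to\cA_i$ is eventually an isomorphism away from the closed point, and then using that the modules $\cA_i/\cA$ have finite length (so do satisfy DCC). Some argument of this kind, or a citation to SGA6 XII 2.6, is needed.

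Second, in (3): for $f$ merely integral (not finite) your deduction that a monomorphism which is an integral epimorphism is an isomorphism is incomplete. From $(\cA_0/\cO_S)\otimes_{\cO_S}\cA_0=0$ you only extract the fibrewise statement $(\cA_0/\cO_S)\otimes k(\fm)=0$, i.e.\ $M=\fm M$ locally at every maximal ideal for $M=\cA_0/\cO_S$. Since $f$ is integral but not finite, $M$ is not a finitely generated module, Nakayama does not apply, and the condition $M=\fm M$ does not pass to cyclic (or any finitely generated) submodules, so the proposed ``Nakayama on cyclic submodules plus annihilator argument'' does not close; your fibrewise reduction discards the epimorphism hypothesis too early, and the fibrewise condition alone is strictly weaker than what is needed. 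The statement you want is the classical fact that an integral ring epimorphism is surjective (equivalently, an integral monomorphism of affine morphisms is a closed immersion), which together with Lemma~\ref{lemm:at_least} gives the isomorphism; it requires a different argument or a reference. In the finite case, which is all that (5) and the paper's applications use, your Nakayama argument is fine and is exactly the paper's ``being dominant and finite, it must be an isomorphism''; the gap concerns only the integral case in which (3) is stated.
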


All claims except~(5) are actually quite formal.

\begin{proof}
(1) Write $\cA:=\cO_S$ and $\cA_0:=f_*\cO_T$. Since $\cA_i\subset \cA_0$,
the morphisms $T_i=\Spec_S \cA_i\to S$ and $T_i\to T_{i+1}$ are integral.
The surjective
morphism $\cA_i\otimes_{\cA} \cA_i\to \cA_i\otimes_{\cA_{i+1}} \cA_i$
has kernel generated by the local sections
$a\otimes 1-1\otimes a$ for local sections $a\in \cA_{i+1}$.
By the definition of $\cA_{i+1}$, it follows that this map is an
isomorphism hence~(ii) is satisfied. Therefore we have an exact
diagram $\cA_{i+1}\to \cA_i
\rightrightarrows \cA_i\otimes_{\cA_{i+1}} \cA_i$. By
Corollary~\ref{coro:uniform_effective_epi}, this means
that $T_i\to T_{i+1}$ is an effective
epimorphism, hence~(i) is satisfied. Conversely, if a factorization
$T=T'_0\to T'_1\to T'_2\to\dots$ satisfies (ii) then
$\cA'_i\otimes_{\cA} \cA'_i\to \cA'_i\otimes_{\cA'_{i+1}} \cA'_i$
is an isomorphism, and if moreover (i) is satisfied then
$\cA'_{i+1}=\ker(\cA_i\rightrightarrows \cA_i\otimes_{\cO_S} \cA_i)$.
Thus we see that the given sequence is the canonical one.

\smallskip

\noindent (2) This follows because the formation of kernels
of morphisms of quasi-coherent sheaves commutes with flat
base change and is local for the flat topology on the base.

\smallskip

\noindent (3) This follows from the definitions.

\smallskip

\noindent (4) By induction, assume that there is a diagram
of length $i$:
\[
\xymatrix{
T'_0 \ar[r] \ar[d]_{\id_T} & T'_1 \ar[r] \ar[d]
& \dots \ar[r] & T'_{i-1} \ar[r] \ar[d] & T'_i \ar[d] \\
T_0 \ar[r] & T_1 \ar[r] & \dots \ar[r] & T_{i-1} \ar[r] & T_i.}
\]
Then because $T'_i\to T'_{i+1}$ is effective, we have a containment
$\cA_{i+1}=\ker(\cA_i\rightrightarrows \cA_i\otimes_{\cA} \cA_i)\subset
\ker(\cA'_i\rightrightarrows \cA'_i\otimes_{\cA'_{i+1}} \cA'_i)=\cA'_{i+1}$.
This gives a map $T'_{i+1}\to T_{i+1}$ and a diagram
of length $i+1$.

\smallskip

\noindent (5) First, assume that the canonical sequence has
finite length, so there exists $n\ge 0$ such that $\cA_{n+1}=\cA_n$.
Then we have an isomorphism
$\cA_n\otimes_{\cA} \cA_n\to
\cA_n\otimes_{\cA_{n+1}} \cA_n\simeq\cA_n$.
This means that $T_n\to S$ is a monomorphism. Being dominant
and finite, it must be an isomorphism hence the sequence
is separated. Now we prove that the canonical sequence
has finite length. Since~$S$ is noetherian, this property
is \'etale-local on $S$. Moreover the formation of~$\cA_i$
commutes with restriction to an open subscheme and with passage
to the stalks on \'etale local rings.
If for some point $s\in S$ the sequence of stalks
$(\cA_{i,s})_{i\ge 0}$ is stationary, then the isomorphism
$\cA_s\to \cA_{n,s}$ extends in a neighborhood of $s$. Thus we
may assume that~$S$ is local with closed point $s$. In particular,
we may assume that $S$ (local or not) has finite dimension~$d$.
We now argue by induction on $d$. If $d=0$, the rings $\cA_i$ have
finite length and the sequence $\cA_i$ is stationary. If $d>0$,
the open $U=S\setminus \{s\}$ has dimension $<d$ so by induction
the sequence $\cA_{i}$ is stationary after restriction to~$U$.
By the same argument as before, we then know that for all big enough
$i$ the morphism $\cA\to\cA_i$ is an isomorphism away from $s$.
It follows that the quotient $\cO_S$-module $\cA_i/\cA$ has finite
length. Thus $\cA_i/\cA$ is stationary, and hence also $\cA_i$.

Now consider a finite separated sequence
$T=T'_0\to T'_1\to \dots\to T'_m=S$ of length $m$ whose factors
are effective epimorphisms. We have a diagram:
\[
\xymatrix{
T'_0 \ar[r] \ar[d]_{\id_T} & T'_1 \ar[r] \ar[d]
& \dots \ar[r] & T'_{m-1} \ar[r] \ar[d]
& T'_m \ar[d] \ar@{=}[r] & S \ar@{=}[d] \\
T_0 \ar[r] & T_1 \ar[r] & \dots \ar[r] & T_{m-1} \ar[r] & T_m \ar[r] & S.}
\]
We obtain $\cO_S\subset \cA_m\subset \cA'_m=\cO_S$. Thus
$T_m\to S$ is an isomorphism, so the canonical sequence
has length at most $m$.
\end{proof}

\begin{exam}
Let $k$ be a field and $S$ the affine cuspidal $k$-curve with equation
$y^3=x^4$. We shall see that the canonical sequence of the
normalization map $f:T\to S$ has length $n=2$, as follows~:

\begin{center}
\begin{tikzpicture}[scale=1.3,text height=1.5ex, text depth=0.5ex]
\node (A) at (0,0) {$T$};
\node (B) at (2,0) {$T_1$};
\node (C) at (4,0) {$S$};
\node (D) at (0,-.8) {$\AA^1_k$};
\node (D) at (2,-1.1)
{{\footnotesize \begin{tabular}{c}
 spatial \\ singularity \\
$\begin{array}{l} y^2=xz \\ z^2=x^2y \\ yz=x^3 \end{array}$
\end{tabular}}};
\node (D) at (4,-1) {\footnotesize \begin{tabular}{c}
planar \\ cuspidal \\ singularity \\
$y^3=x^4$ \\
\end{tabular}};
\draw[-myto,font=\scriptsize]
(A) edge (B)
(B) edge (C);
\end{tikzpicture}
\end{center}

\medskip\bigskip

\noindent We have $S=\Spec(A)$ and $T=\Spec(B)$ with
$A=k[x,y]/(y^3-x^4)$ and $B=k[t]$, the morphism $A\to B$ being
given by $x=t^3$ and $y=t^4$. In other words
$A\simeq k[t^3,t^4]\into k[t]$. We can write~:
\[
B\otimes_A B=\frac{k[t_1,t_2]}{(t_1^3-t_2^3,t_1^4-t_2^4)}
\]
and the two arrows $B\rightrightarrows B\otimes_A B$ map $t$ to $t_1$ and $t_2$
respectively. The ring $B_1=\ker(B\rightrightarrows B\otimes_A B)$ contains $A$
as well as the element $t^5$, since
$t_1^5=t_1t_2^4=t_1t_2t_1^3=t_1^4t_2=t_2^5$. Therefore $B$ contains
$k[t^3,t^4,t^5]$. If we notice that the annihilator of
$t_1-t_2$ in $B\otimes_A B$ is generated by
$t_1^2+t_1t_2+t_2^2$ and $(t_1+t_2)(t_1^2+t_2^2)$, we see that $B_1$
does not contain elements of the form $at+bt^2$. This proves that
$B_1=k[t^3,t^4,t^5]$. Letting $z=t^5$ we get the presentation~:
\[
B_1=\frac{k[x,y,z]}{(y^2-xz,z^2-x^2y,yz-x^3)}.
\]
In particular $B_1$ is a free $k[x]$-module with basis
$\{1,y,z\}$. We now prove that $A=\ker(B_1\rightrightarrows B_1\otimes_A B_1)$.
We write~:
\[
B_1\otimes_A B_1=k[x,y,z_1,z_2]/I
\]
with $I=(y^2-xz_1,z_1^2-x^2y,yz_1-x^3,x(z_1-z_2),y(z_1-z_2),z_1^2-z_2^2)$.
The two arrows $B_1\rightrightarrows B_1\otimes_A B_1$ map $z$ to $z_1$ and $z_2$
respectively. Let $P=a(x)+b(x)y+c(x)z$ be an element of $B_1$ such
that $P(x,y,z_1)=P(x,y,z_2)$, i.e., $c(x)z_1=c(x)z_2$. In view of the
structure of the annihilator of $z_1-z_2$ inside $B_1\otimes_A B_1$,
this implies that $x$ divides $c(x)$, hence $P\in k[x,y,xz]=k[x,y]=A$,
as announced.
\end{exam}

\section{The category of groupoids} \label{section:groupoids}

In this section we briefly recall some definitions and notations
on groupoids (\S~\ref{subsection:definitions}) and we define the
complexity of a flat groupoid with finite stabilizer whose
$j_Y:R\to X\times_Y X$ map is schematically dominant (\S~\ref{subsection:complexity}).

\subsection{The vocabulary of groupoids}
\label{subsection:definitions}

Good references for this material are Keel--Mori \cite{KM97} and
Rydh \cite{Ry13}. We fix a base algebraic space~$S$, and products
are fibered over $S$. We use the vocabulary of the functor of
points: a {\em $T$-point} of an algebraic space~$X$ over $S$
is a morphism $x:T\to X$ with values in some $S$-scheme $T$.
We often write $x\in X(T)$.

\begin{thematic-item} {\bf Groupoids.}
We work with {\em groupoids in $S$-algebraic spaces}, also called
{\em groupoid spaces} or simply {\em groupoids}. A groupoid is given
by five morphisms of algebraic $S$-spaces $s,t:R\to X$, $c:R\times_{s,X,t} R\to X$,
$e:X\to R$, $i:R\to R$ subject to the conditions that $X(T)$ is the
set of objects and $R(T)$ is the set of arrows of a small category,
functorially in $T$. The maps $s,t,c,e,i$ are called {\em source},
{\em target}, {\em composition}, {\em unit} (or {\em identity}),
{\em inversion}. The points of
$R\times_{s,X,t} R$ are called {\em pairs of composable arrows}. Usually
we denote a groupoid simply by $s,t:R\rightrightarrows X$ and we call $j$ the map
$j=(t,s):R\to X\times X$. Typically a $T$-point of $X$ will be denoted~$x$
while a $T$-point of $R$ will be denoted with a Greek letter like $\alpha$.
We sometimes write~$1_x$ or simply $1$ instead of $e(x)$.
We occasionally write $\alpha:x\to y$ if $x=s(\alpha)$ and $y=t(\alpha)$.
With our choices of $c$ and $j$, note that it is more natural to picture
$T$-points of $R$ as arrows $y\xleftarrow{\alpha} x$ going from right to left.
\end{thematic-item}

\begin{thematic-item} {\bf Actions.}
For instance, an $S$-group space $G$ acting on an algebraic space $X$ gives
rise to a groupoid $s,t:G\times X\rightrightarrows X$ where~$s$ is the second
projection and $t$ is the action. In the general setting one may
shape one's intuition by thinking of a groupoid as a space $R$
acting on a space $X$. If $\alpha:x\to y$ is an arrow, there is a
corresponding action-like notation $\alpha(x):=t(\alpha)=y$. In
these terms, the action is trivial if and only if $s=t$ and the
maps $c,e,i$ make $R\to X$ into an $X$-group space.
\end{thematic-item}

\begin{thematic-item} {\bf Stabilizers.}
If $R\rightrightarrows X$ is a groupoid, then its
{\em stabilizer} is the $X$-group space $\Stab_R=j^{-1}(\Delta_X)$
where $\Delta_X\subset X\times X$ is the diagonal. This is the
largest subgroupoid of $R$ which is a group space, or also, the
largest subgroupoid acting trivially.
\end{thematic-item}

\begin{thematic-item} {\bf Subgroupoids.} \label{item:subgroupoids}
A {\em subgroupoid} is a sub-algebraic space $P\subset R$ that is stable
under composition and inversion, and contains the unit section $e(X)$.
(Topologists call this a {\em wide subgroupoid} because they also allow
subgroupoids $P\rightrightarrows Y$ whose base is an arbitrary possibly empty subspace
$Y\subset X$. By sub-algebraic space, we here mean a subfunctor that is an
algebraic space, that is, a monomorphism $P\to R$ of algebraic spaces.)
A subgroupoid is called {\em normal} if for any
$\alpha\in P(T)$ and $\varphi\in R(T)$ we have $\varphi\alpha\varphi^{-1}\in P(T)$
whenever composability holds. In detail, if $\varphi:x\to y$, then
composability means that $\alpha\in\Stab_{P,x}(T)$ and then we have
$\varphi\alpha\varphi^{-1}\in\Stab_{P,y}(T)$. In particular the condition
that~$P$ be normal in $R$ depends only on the stabilizer $\Stab_P$. Any
subgroupoid containing $\Stab_R$ is normal; in particular if $\Stab_R$
is trivial then all subgroupoids are normal.
\end{thematic-item}

\begin{thematic-item} {\bf Morphisms, kernels.}
A {\em morphism of groupoids} from $R\rightrightarrows X$ to $R'\rightrightarrows X'$
is a morphism of $S$-spaces $f:R\to R'$ such that
$f(\alpha\beta)=f(\alpha)f(\beta)$ for all composable arrows
$\alpha,\beta\in R(T)$. We also use the notation $f:(R,X)\to (R',X')$.
Such a morphism $f$ has various automatic
compatibilities with the maps $s,t,e,i$. For instance, $f$ maps
identities to identities. Moreover there is an induced morphism on objects
$s'\circ f\circ e=t'\circ f\circ e:X\to X'$ which we also write $f$
for simplicity. Thus, notationally for an arrow $\alpha:x\to y$ in $R$
we obtain an arrow $f(\alpha):f(x)\to f(y)$ in $R'$.
The {\em kernel} of a morphism $f:R\to R'$ is the preimage of the unit
section $e':X'\to R'$. It is a normal subgroupoid of $R$.
\end{thematic-item}

\begin{thematic-item} {\bf Invariant morphisms.}
Let $R\rightrightarrows X$ be a groupoid and let $P$ be a subgroupoid. Then
$P$ acts on $R$ in various natural ways. The action by precomposition
is a groupoid $R\times_{(s,t)}P\rightrightarrows R$, and the action by
postcomposition is a groupoid $P\times_{(s,t)}R\rightrightarrows R$.
The stabilizers of both actions are trivial. The simultaneous action,
to be called {\em by pre-post-composition}, is a groupoid
$P\times_{(s,t)}R\times_{(s,t)}P\rightrightarrows R$. We have an isomorphism
$\Stab_{P\times_{(s,t)}R\times_{(s,t)}P} \isomto
\Stab_P\times_{(s,t)}R$
given by $(\varphi,\alpha,\psi)\longmapsto (\varphi,\alpha)$.
This implies that the morphism of groupoids
$f:P\times_{(s,t)}R\times_{(s,t)}P \too R$, $f(\varphi,\alpha,\psi)=\varphi$
whose underlying morphism on objects is $f=t:R\to X$ is fixed point reflecting,
in the sense of \cite[2.2]{KM97}. Now let us consider moreover a morphism
of groupoids $f:R\to R'$. Then the following four assertions are
rewordings of one and
the same property~: (i) $P\subset \ker(f)$, (ii) $f$ is invariant by
the left $P$-action on $R$, (iii) $f$ is invariant by the right $P$-action
on $R$, (iv) $f$ is invariant by the pre-post-composition $P$-action
on $R$. If this property holds, we say that~$f$ is {\em $P$-invariant}.
\end{thematic-item}

\begin{thematic-item} {\bf Quotients.} \label{def:quotient}
Let $R\rightrightarrows X$ be a groupoid and $P\subset R$ a subgroupoid.
A {\em categorical quotient} of $R$ by~$P$ is a morphism of groupoids
$\pi:R\to Q$ which is $P$-invariant and is universal among invariant
morphisms $R\to R'$. A categorical quotient is
called {\em regular} if the canonical map $P\to \ker(\pi)$ is an isomorphism.
\end{thematic-item}

In Definition~\ref{def:quotient} we simplify the discussion by
restricting to categorical quotients; other notions of quotients
are recalled in~\ref{subsection:complexity} below. To shed light
on the definition, note that by the universal property there is
a morphism $P\to \ker(\pi)$ but contrary to what happens in the
category of groups, it is not at
all clear if this is an isomorphism (and we do not think it is the
case in general). We will not pursue this question in this article.

\subsection{The complexity} \label{subsection:complexity}

Whereas we introduced basic notions internal to the category of
{\em groupoids}, in order to define the complexity we come back to
the categories of schemes and algebraic spaces. Recall that if
$s,t:R\rightrightarrows X$ is a groupoid space, then a morphism $f:X\to X'$ is
called {\em $R$-invariant} if $fs=ft$. We will not repeat here the
various definitions related to quotients because they receive a clear
presentation in \cite[\S~1]{KM97} and \cite[\S~2]{Ry13}. We content
ourselves with saying that a morphism $X\to Y$ is a {\em categorical
quotient} if it is initial among $R$-invariant morphisms $X\to X'$,
a {\em geometric quotient} if it is a submersion and $\cO_Y$ is
identified with the sheaf of $R$-invariant sections of $\cO_X$, and
a quotient of one of these types is {\em uniform} it its formation
commutes with flat base change. We recall the statement of the
fundamental Keel--Mori theorem from \cite{KM97}, \cite{Ry13} as well
as the case with trivial stabilizer from \cite{Ar74}.

\begin{theorem} \label{theo:quotient_by_ff_groupoid}
Let $S$ be an algebraic space and let $R\rightrightarrows X$ be a flat,
locally finitely presented $S$-groupoid space with finite
stabilizer.
\begin{trivlist}
\itemn{1} There is a uniform geometric and categorical quotient
$X\to X/R=Y$ such that the map $j_Y:R\to X\times_Y X$ is finite
and surjective. Moreover $X\to Y$ is universally open.
\itemn{2} The space $Y\to S$ is separated (resp.\ quasi-separated)
if and only if $j_S:R\to X\times_SX$ is finite (resp.\ quasi-compact).
It is locally of finite type if $S$ is locally
noetherian and $X\to S$ is locally of finite type.
\itemn{3} If the stabilizer is trivial, then $Y$ is the fppf
quotient sheaf of $X$ by $R$, $X\to Y$ is flat locally finitely
presented, $j_Y$ is an isomorphism, and the formation
of $Y$ commutes with arbitrary base changes $Y'\to Y$.
\end{trivlist}
\end{theorem}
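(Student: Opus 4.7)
The plan is to present this theorem as a recollection from the literature rather than to prove it from scratch, since the statement is a compilation of the Keel--Mori theorem and its subsequent refinements.

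For part (1), I would invoke \cite[Thm.~3.1]{Ry13}, which extends \cite[Cor.~1.3]{KM97} from the original noetherian setting to flat, locally finitely presented groupoids over an arbitrary algebraic space $S$. That reference directly yields a uniform geometric and categorical quotient $\pi:X\to Y$, the finiteness and surjectivity of $j_Y$, and the universal openness of $\pi$.

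For part (2), the plan is to use descent along the universally open surjection $\pi$. The scheme-theoretic image of $j_S:R\to X\times_S X$ is the preimage of the relative diagonal $\Delta_{Y/S}$ under the surjection $X\times_S X\to Y\times_S Y$, and this preimage equals $X\times_Y X$. Since $j_Y:R\to X\times_Y X$ is finite and surjective by~(1), finiteness (resp.\ quasi-compactness) of $j_S$ is equivalent to the corresponding property of $X\times_Y X\to X\times_S X$. Descent along the universally open, surjective, qcqs map $X\times_S X\to Y\times_S Y$ then translates these into closedness (resp.\ quasi-compactness) of $\Delta_{Y/S}$, i.e., separation (resp.\ quasi-separation) of $Y\to S$. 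The converse uses that $j_S$ is the pullback of $\Delta_{Y/S}$ along $X\times_S X\to Y\times_S Y$. The locally-of-finite-type assertion under noetherian hypotheses is also contained in \cite{KM97, Ry13}.

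For part (3), when $\Stab_R\to X$ is trivial, the map $j_Y$ is a finite surjective monomorphism and hence an isomorphism, so $R\cong X\times_Y X$. This identifies $Y$ with the fppf-sheafification of the naive quotient presheaf, which is the content of Artin's theorem \cite{Ar74}. Flatness and local finite presentation of $\pi$ then descend from the second projection $X\times_Y X\to X$ along the faithfully flat $\pi$ itself, and compatibility with arbitrary base change follows from the sheaf-theoretic description together with fppf descent. The main obstacle here is really nothing at the level of this recollection: the substantive geometric content is carried entirely by the cited references, and re-proving the Keel--Mori existence theorem is not the goal of this paper.
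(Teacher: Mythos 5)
Your proposal matches the paper's treatment: the paper states this theorem purely as a recollection, with no proof of its own, citing Keel--Mori \cite{KM97}, Rydh \cite{Ry13} and Artin \cite{Ar74} for exactly the three parts you attribute to them. The extra sketches you give (descent of the diagonal along the universally open surjection for (2), and the finite surjective monomorphism argument for $j_Y$ in (3)) are correct but play the same role as the citations, so there is no substantive divergence from the paper.
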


When $R\rightrightarrows X$ is finite and locally free, it is known moreover
that $X\to Y$ is integral.

\begin{remarks} \label{examples of quotient}
(1) The map $j_Y:R\to X\times_Y X$ need not be schematically
dominant, in particular it need not be an epimorphism. Here is an
example. Let $X=\Spec(k[x]/(x^2))$ with action of
$\mu_n=\Spec(k[z]/(z^n-1))$ by multiplication then $Y=X/R=\Spec(k)$.
We have $X\times_Y X=\Spec(k[x_1,x_2]/(x_1^2,x_2^2))$. The morphism
$j_Y:R\to X\times_Y X$ is given by the map of $k$-algebras
$k[x_1,x_2]/(x_1^2,x_2^2)\to k[x,z]/(x^2,z^n-1)$ such that
$x_1\mapsto x$ and $x_2\mapsto zx$. The element $x_1x_2$ is
not zero and it is mapped to $zx^2=0$.

\smallskip

\no (2) The map $X\to X/R$ need not be of finite type even when
$R\rightrightarrows X$ is finite locally free. For example if
$X=\Spec(k[t_1,t_2,\dots])$ with action of $\mu_n$
by $z.t_i=zt_i$ then $X/R$ is the spectrum of the ring of polynomials
all whose homogeneous components have degree a multiple of $n$.
\end{remarks}

In the rest of the text, we will focus on flat groupoids such
that the morphism $j_Y:R\to X\times_Y X$ is an epimorphism.
This occurs for instance when $X\to Y$ is flat and there is
a schematically dense open subscheme $X_0\subset X$ where
the action is free. One way to measure further the good
behavior of these groupoids is furnished
by Proposition~\ref{prop:canonical_factorization}
and leads to the following notion.

\begin{definition}
Let $R\rightrightarrows X$ be a flat, locally finitely presented groupoid space
with finite stabilizer. We say that
{\em $R\rightrightarrows X$ has complexity $n$}
if the map $j_Y:R\to X\times_Y X$ is an epimorphism
and the length of its canonical sequence is $n$.
\end{definition}

\begin{remarks}
(1) The groupoid $R\rightrightarrows X$ has complexity $0$ if and only if it
is free. It has complexity at most $1$ if and only if $j_Y$ is
an effective epimorphism.

\smallskip

\noindent (2) If $j_Y$ is an epimorphism, then,
by Proposition~\ref{prop:canonical_factorization}(5),
a sufficient condition for a groupoid to
have finite complexity is that $X$ is of finite type over
a fixed noetherian base scheme.

\smallskip

\noindent (3) Levelt's
results \cite{Le65}, see Example~\ref{examples:effective_ring_extensions}(4),
hint
that finite locally free groupoids with isolated fixed points of
stabilizer degree at most $2$ (e.g., an action of a group scheme
of order 2 with isolated fixed points) should have complexity
at most $1$. We shall see examples of this in the next
section.
\end{remarks}

\subsection{Examples} \label{examples}

Because the formation of the canonical sequence
is local on the base for the flat topology
(Proposition~\ref{prop:canonical_factorization}(2)),
the computation of the complexity can be done locally.
It follows that computations in this section provide
results also for groupoids which are group actions only
locally for the flat topology, or locally after passage
to a completed local ring. This applies for instance to
quotients of surfaces by $p$-closed vector fields, studied
by many people in the last 40 years (Rudakov--Shafarevich,
Russell, Ekedahl, Katsura--Takeda, Hirokado...).

We start with examples valid in any characteristic.

\begin{prop}
Let $X=\AA^n_S$ be affine $n$-space over a scheme $S$.
Let $G$ be the symmetric group on~$n$ letters, acting by
permutation of the coordinates of $X$. Then the quotient
map $\pi:X\to Y=X/G$ is finite locally free of rank $n!$
and the groupoid $G\times X\rightrightarrows X$ has complexity 1.
\end{prop}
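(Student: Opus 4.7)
The plan has two parts: establishing that $\pi$ is finite locally free of rank $n!$, and showing that the groupoid has complexity exactly $1$. For the first, I invoke the classical fact that $A=\cO_S[x_1,\dots,x_n]$ is a free module of rank $n!$ over its subring $B=A^{S_n}=\cO_S[e_1,\dots,e_n]$ of symmetric polynomials, with explicit basis the \emph{staircase} monomials $\prod_{i=1}^{n-1}x_i^{a_i}$ for $0\le a_i\le n-i$; this works over any base $\cO_S$. The complexity is at least $1$ because the small diagonal $\{x_1=\cdots=x_n\}$ is fixed by all of $G$, so $j_Y$ is not an isomorphism.

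To prove complexity at most $1$, the task is to show that $j_Y:R\to X\times_Y X$ is an effective epimorphism. Since $j_Y$ is integral, Corollary~\ref{coro:uniform_effective_epi} reduces this to the exactness of
$$
\cO(X\times_Y X)\longrightarrow \cO(R)\rightrightarrows \cO\bigl(R\times_{X\times_Y X}R\bigr),
$$
and by Proposition~\ref{prop:canonical_factorization}(2) one may work universally over $S=\Spec(\ZZ)$. As $G$ is constant, $\cO(R)=\prod_{g\in G}A$, and the fibre product decomposes as $\bigsqcup_{(g_1,g_2)\in G\times G}X^{g_2^{-1}g_1}$, where $X^\sigma\subset X$ is the scheme-theoretic fixed locus of $\sigma$, cut out by the ideal $I(X^\sigma)=(x_i-x_{\sigma(i)}:1\le i\le n)$. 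The equalizer is therefore
$$
E=\bigl\{(f_g)_g\in \textstyle\prod_g A\ :\ f_{g\sigma}-f_g\in I(X^\sigma)\text{ for all }g,\sigma\in G\bigr\},
$$
and a direct check using $j_Y^*(a\otimes b)(g,x)=a(gx)\,b(x)$ confirms $j_Y^*(A\otimes_B A)\subset E$.

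Injectivity of $j_Y^*$ is straightforward: $A\otimes_B A$ is a free $A$-module of rank $n!$ via either projection, and $j_Y$ is an isomorphism over the dense open complement of the branch locus in $Y$, so the kernel is zero. The main obstacle is surjectivity onto $E$. A natural approach is local analysis: using Proposition~\ref{prop:canonical_factorization}(2), after flat base change to the henselization of $Y$ at a geometric point, the stabilizer of any lift is a Young subgroup $S_{\lambda_1}\times\cdots\times S_{\lambda_k}\subset S_n$ acting on disjoint coordinate blocks, and an inductive argument on $n$ (peeling off blocks with trivial stabilizer contribution) reduces the verification to small explicit cases tractable with the staircase basis. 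A potentially cleaner alternative is to observe that $A\otimes_B A$ is a reduced ring whose irreducible components are the $n!$ graphs $\Gamma_g=\{(gx,x)\}\subset X\times_Y X$ meeting transversally along the ramification loci, and to conclude by a seminormality-type argument in the spirit of Example~\ref{examples:effective_ring_extensions}(5).
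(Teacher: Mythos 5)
Your setup is sound and agrees with the paper's: the freeness of rank $n!$ with the staircase basis, the lower bound on the complexity because the action is not free, the reduction via Corollary~\ref{coro:uniform_effective_epi} to exactness of $\cO(X\times_YX)\to\cO(R)\rightrightarrows\cO(R\times_{X\times_YX}R)$, and the identification of the equalizer as the tuples $(f_g)$ with $f_{g\sigma}-f_g\in I(X^\sigma)$. The injectivity argument is also essentially correct (it amounts to the Vandermonde being a nonzerodivisor). But the heart of the proposition --- surjectivity of $j_Y^*$ onto the equalizer $E$ --- is exactly what you do not prove: you offer two sketches, and neither works as stated. The local/inductive approach gives no reduction at the critical stratum: at a geometric point of the small diagonal the stabilizer is all of $\mathfrak{S}_n$, so henselizing there returns you to the original problem with the same group, and the proposed induction has nothing to peel off precisely where the difficulty sits. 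The ``seminormality-type'' alternative is circular: to invoke the mechanism of Example~\ref{examples:effective_ring_extensions}(5) (i.e.\ \cite[Lem.~B.5]{Ry10}) you would need to know that $B\otimes_A B$ is weakly normal, i.e.\ weakly subintegrally closed in $\prod_\sigma B$, and that is equivalent to the statement you are trying to prove; asserting that the $n!$ graphs ``meet transversally along the ramification loci'' restates the desired gluing description of $B\otimes_A B$ rather than establishing it. In addition, the reduction to $S=\Spec(\ZZ)$ is not justified: Proposition~\ref{prop:canonical_factorization}(2) only gives compatibility with \emph{flat} base change, and an arbitrary base is not flat over $\ZZ$; exactness of the relevant sequence over $\ZZ$ does not automatically persist under non-flat base change unless you add a universal-exactness argument (for instance $\ZZ$-flatness of the cokernels), which you do not supply.

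For comparison, the paper proves the surjectivity by a direct computation valid over an arbitrary base ring $R$ (so no reduction to $\ZZ$ is needed): it writes down explicit $B$-linear forms $\varphi_M$, indexed by the staircase monomials $M$, and Lagrange interpolation polynomials $L_\sigma$ which invert the evaluation map after inverting the Vandermonde $V$; this characterizes the image of $\ev$ as the tuples $Q$ with $V\mid\varphi_M(Q)$ for all $M$. The divisibility is then checked from the equalizer conditions by pairing $\sigma$ with $\tau\sigma$ for each transposition $\tau=(u,v)$, using only the codimension-one (transposition) congruences. Some argument of this concrete kind, or a genuinely established weak-normality statement for $B\otimes_A B$, is what your proposal is missing; as written, the proof of ``complexity at most $1$'' is a plan rather than a proof.
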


\begin{proof}
First we set the notations. We may assume $S=\Spec(R)$ affine.
Then $X=\Spec(B)$ where $B=R[x_1,\dots,x_n]$ is a polynomial
ring in $n$ variables, and $Y=\Spec(A)$ where $A=B^G$ is the
ring of invariants. Let $S_k(X_1,\dots,X_n)$ be the symmetric
function of degree $k$ in $X_1,\dots,X_n$ and
$s_k=S_k(x_1,\dots,x_n)\in A$. By the Main Theorem on symmetric
functions, we have $A=R[s_1,\dots,s_n]$ which is a ring of
polynomials in the variables $s_i$, moreover
\[
B\simeq
\frac{A[x_1,\dots,x_n]}{(S_1(x_i)-s_1,\dots,S_n(x_i)-s_n)}
\]
and therefore
\[
B\otimes_A B\simeq
\frac{B[X_1,\dots,X_n]}{(S_1(X_i)-s_1,\dots,S_n(X_i)-s_n)}
\]
is $B$-free of rank $n!$ with basis the set of monomials
$\sB=\{X_1^{d_1}\dots X_n^{d_n};0\le d_i<i,\forall i\}$.
The map $j:G\times X\to X\times_YX$ corresponds to the map
of $B$-algebras which is given by evaluation on
$(x_1,\dots,x_n)$ and its permutations:
\[
\ev:
\frac{B[X_1,\dots,X_n]}{(S_1(X_i)-s_1,\dots,S_n(X_i)-s_n)}
\too \prod_{\sigma\in \mathfrak{S}_n} B \quad,\quad
P\longmapsto (P(x_{\sigma(1)},\dots,x_{\sigma(n)}))_{\sigma\in \mathfrak{S}_n}.
\]
The stabilizer $\Sigma\to X$ of the groupoid has function ring:
\[
B[\Sigma]=
\prod_{\tau\in \mathfrak{S}_n} \frac{B}{(x_1-x_{\tau(1)},\dots,x_n-x_{\tau(n)})}.
\]
The two maps $\pr_2,d:\Sigma\times_X(G\times X)\rightrightarrows G\times X$
correspond to the maps of $B$-algebras
\[
\alpha,\beta:
\prod_{\sigma\in \mathfrak{S}_n} B \too \prod_{\sigma,\tau\in \mathfrak{S}_n}
\frac{B}{(x_1-x_{\tau(1)},\dots,x_n-x_{\tau(n)})}
\]
defined by $\alpha(Q)_{\sigma,\tau}=Q_\sigma$ and
$\beta(Q)_{\sigma,\tau}=Q_{\tau\sigma}$
for all $Q=(Q_\sigma)_{\sigma\in \mathfrak{S}_n}$.

Since the action of $G$ on $X$ is not free, the complexity
of the groupoid is not 0. Hence what remains to be proved
is that $\ev$ is injective and $\im(\ev)=\ker(\alpha-\beta)$.
In order to describe the image of $\ev$ let us introduce some
more notation. Let $E$ be the set of pairs of integers $(i,j)$
with $1\le i<j\le n$. Let
$V=V(x_1,\dots,x_n)=\prod_{(i,j)\in E}(x_j-x_i)$ be the
Vandermonde of the $x_i$. To each subset $F\subset E$ we
attach a monomial $\mu(F)=\prod_{(i,j)\in F} X_j$.
For example if $n=4$ and $F=\{(1,3),(2,4),(3,4)\}$ then
$\mu(F)=X_3X_4^2$. Obviously the map
$\mu:\sP(E)\to \sB$ is surjective and if $M=\mu(F)$ then $\deg(M)=\card(F)$.
Now for each basis monomial $M\in \sB$ we define a $B$-linear
form $\varphi_M:\prod_{\sigma\in \mathfrak{S}_n} B\to B$ by
\[
Q=(Q_\sigma)_{\sigma\in \mathfrak{S}_n} \longmapsto
\varphi_M(Q)=\sum_{\sigma}\varepsilon(\sigma)
\bigg(\sum_{F\subset E\atop\mu(F)=M} \prod_{(i,j)\in E-F} x_{\sigma(i)}\bigg)
Q_\sigma.
\]
(Here $\varepsilon(\sigma)$ is the sign of the permutation $\sigma$.)
We let $\varphi:\prod_{\sigma\in \mathfrak{S}_n} B\to \prod_{M\in\sB} B$ be
the map with components $\varphi_M$
and we use the same letter to denote the map with values
in $\prod_{M\in\sB} B/VB$ obtained by reduction mod $V$.
We claim that the following sequence is exact:
\[
0\too
\frac{B[X_1,\dots,X_n]}{(S_1(X_i)-s_1,\dots,S_n(X_i)-s_n)}
\stackrel{\ev}{\too} \prod_{\sigma\in \mathfrak{S}_n} B
\stackrel{\varphi}{\too} \prod_{M\in\sB} B/VB.
\]
In order to prove this we introduce suitable Lagrange
interpolation polynomials which allow us to invert the map
$\ev$ after the base change $B\to B[1/V]$. Precisely, we set:
\[
L_\sigma(X_1,\dots,X_n)=\frac{\varepsilon(\sigma)}{V}
\prod_{(i,j)\in E} (X_j-x_{\sigma(i)}).
\]
We have $\deg_{X_i}(L_\sigma)<i$ for all $i=1,\dots,n$.
Thus, after inverting $V$, the polynomial $L_\sigma$ lies in the submodule
$\oplus_{M\in\sB} R\cdot M \subset B[X_1,\dots,X_n]$ which
as we said earlier maps isomorphically onto
$B[X_1,\dots,X_n]/(S_1(X_i)-s_1,\dots,S_n(X_i)-s_n)$.
Moreover one sees that $L_\sigma(x_{\tau(1)},\dots,x_{\tau(n)})=
\delta_{\sigma,\tau}$ (Kronecker $\delta$). From these remarks
follows that the inverse to
$\ev\otimes \id_{B[1/V]}$ is given by interpolation, that is:
\[
\interp(Q)=\sum_{\sigma\in \mathfrak{S}_n} Q_\sigma L_\sigma.
\]
From this, since $V$ is a nonzerodivisor in $B$, the injectivity
of $\ev$ follows. By expanding one finds:
\begin{align*}
\interp(Q)
& =  \frac{1}{V}
\sum_{\sigma\in \mathfrak{S}_n} \varepsilon(\sigma)Q_\sigma
\prod_{(i,j)\in E} (X_j-x_{\sigma(i)}) \\
& =  \frac{1}{V}
\sum_{\sigma\in \mathfrak{S}_n} \varepsilon(\sigma)Q_\sigma
\sum_{F\subset E} (-1)^{\card(E-F)}
\cdot\prod_{(i,j)\in E-F} x_{\sigma(i)} \cdot\mu(F) \\
& =  \frac{1}{V}
\sum_{\sigma\in \mathfrak{S}_n} \varepsilon(\sigma)Q_\sigma
\sum_{M\in \sB}\sum_{F\subset E\atop\mu(F)=M}
(-1)^{\frac{n(n-1)}{2}-\deg(M)}
\prod_{(i,j)\in E-F} x_{\sigma(i)} \cdot M \\
& =  \frac{1}{V}\sum_{M\in \sB}(-1)^{\frac{n(n-1)}{2}-\deg(M)}
\varphi_M(Q) \cdot M.
\end{align*}
Since $Q=(\ev\otimes \id_{B[1/V]})(\interp(Q))$, 
we see that $Q$ lies in the image of $\ev$ if and only if the
components of $\interp(Q)$ on the basis vectors $M\in \sB$
lie in $B$. This means precisely that $\varphi_M(Q)$ is divisible by $V$
for all $M\in\sB$, which proves the exactness of the sequence.

We can now conclude. It is clear that $\im(\ev)\subset\ker(\alpha-\beta)$.
In order to prove the reverse inclusion let
$Q=(Q_\sigma)_{\sigma\in \mathfrak{S}_n}$ lie in the equalizer of
$\alpha$ and $\beta$, that is:
\[
Q_{\tau\sigma}\equiv Q_\sigma\mod
(x_1-x_{\tau(1)},\dots,x_n-x_{\tau(n)}) , \quad
\mbox{for all } \sigma,\tau\in \mathfrak{S}_n.
\]
We want to prove that $\varphi_M(Q)$ is divisible by $V$
for all $M\in\sB$. It is enough to prove that $\varphi_M(Q)$
is divisible by $x_v-x_u$ for all $(u,v)\in E$. Consider the
transposition $\tau=(u,v)$. Then $\mathfrak{S}_n$ is partitioned into $n!/2$
pairs $\{\sigma,\tau\sigma\}$ and it is enough to prove that
for each $\sigma$ the sum
\[
\varepsilon(\sigma)
\bigg(\sum_{F\subset E\atop\mu(F)=M} \prod_{(i,j)\in E-F} x_{\sigma(i)}\bigg)
Q_\sigma
+
\varepsilon(\tau\sigma)
\bigg(\sum_{F\subset E\atop\mu(F)=M} \prod_{(i,j)\in E-F} x_{\tau\sigma(i)}\bigg)
Q_{\tau\sigma}
\]
is divisible by $x_v-x_u$. This is clear, because modulo $x_v-x_u$
we have $Q_{\tau\sigma}\equiv Q_\sigma$ by the assumption on~$Q$
and $x_{\tau\sigma(i)}\equiv x_{\sigma(i)}$ by the definition of $\tau$.
\end{proof}

\begin{remark}
More generally, we can ask if the complexity is at most 1
for a finite constant group $G$ acting on a smooth scheme
$X$ in such a way that the pointwise stabilizers $G_x$ are
generated by reflections, in the sense that there is a system
of local coordinates such that $G_x$ is generated by
linear automorphisms of order 2.
\end{remark}

Here is another example in arbitrary characteristic.

\begin{lemma}
Let $R$ be a ring. Let $n\ge 2$ be an integer.
Let $X=\AA^1_R$ be the affine line over $R$,
with the action of $G=\mu_{n,R}$ given by $G\times X\to X$,
$(z,x)\mapsto zx$. Then the groupoid $G\times X\rightrightarrows X$ has
complexity $1$ if $n=2$ and at least~$2$ otherwise.
If $n=3$, the complexity is equal to $2$.
\end{lemma}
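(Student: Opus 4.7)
The plan is to compute everything explicitly in terms of the rings $A = R[x_1, x_2]/(x_1^n - x_2^n)$, the coordinate ring of $X \times_Y X$, and $B = R[x, z]/(z^n - 1)$, the coordinate ring of $G \times X$, where $j_Y$ corresponds to the map $A \to B$ sending $x_1 \mapsto zx$ and $x_2 \mapsto x$. Since $x = 0$ is fixed by all of $\mu_n$ for $n \geq 2$, the action is not free, so the complexity is at least $1$. The first step is to compute $B \otimes_A B$: eliminating $x_1 = z_1 x_2 = z_2 x_2$ gives $B \otimes_A B = R[x_2, z_1, z_2]/(z_1^n - 1,\ z_2^n - 1,\ (z_1 - z_2) x_2)$.

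Next I would determine $\cA_1 = \ker(B \rightrightarrows B \otimes_A B)$. Writing any $b \in B$ uniquely as $b = \sum_{i=0}^{n-1} a_i(x) z^i$, one obtains $b \otimes 1 - 1 \otimes b = \sum_{i=1}^{n-1} a_i(x_2)(z_1^i - z_2^i)$. Because $z_1^i - z_2^i$ is a multiple of $z_1 - z_2$, the key relation $(z_1 - z_2) x_2 = 0$ kills every term involving a positive power of $x_2$, and the vanishing condition reduces to $\sum_{i=1}^{n-1} a_i(0)(z_1^i - z_2^i) = 0$ inside the free $R$-module $R[z_1, z_2]/(z_1^n - 1, z_2^n - 1)$. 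This forces $a_i(0) = 0$ for $1 \leq i \leq n - 1$, so $\cA_1 = R + xB$ as an $R$-subalgebra of $B$.

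Comparing with the image of $A$ in $B$, which is the $R$-span of the monomials $z^i x^{i+j}$ for $0 \leq i \leq n-1$ and $j \geq 0$ (equivalently $R[x] + \sum_{i=1}^{n-1} x^i R[x] z^i$), we find equality precisely when $n = 2$; in that case $A \to B$ is an effective epimorphism and the complexity is $1$. For $n \geq 3$ the element $xz^2$ lies in $\cA_1 \setminus A$ (its $z^2$-coefficient is $x$, which is not in $x^2 R[x]$), so $A \subsetneq \cA_1$ and the complexity is at least $2$.

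For the case $n = 3$ I would finally show $\cA_2 = A$, that is, that $A \to \cA_1$ is itself an effective epimorphism. Setting $W = xz^2 \in \cA_1$, direct computation gives $x_1 W = x_2^2$, $x_2 W = x_1^2$ and $W^2 = x_1 x_2$, so $\cA_1 = A + AW$ and every element of $\cA_1$ has the form $a + cW$ with $a, c \in A$. In $\cA_1 \otimes_A \cA_1$ the identities $x_i W_1 = x_i W_2$ for $i = 1, 2$ yield $x_1(W_1 - W_2) = x_2(W_1 - W_2) = 0$. The equation $b \otimes 1 - 1 \otimes b = c(W_1 - W_2) = 0$ then splits into two cases: if $c \in (x_1, x_2) A$, write $c = x_1 \alpha + x_2 \beta$ and use $x_i W \in A$ to deduce $cW \in A$, hence $b \in A$; if $c \notin (x_1, x_2) A$, reduction modulo $(x_1, x_2)$ maps $\cA_1 \otimes_A \cA_1$ onto the free $R$-module $R[W_1, W_2]/(W_1^2, W_2^2)$, where the image of $c(W_1 - W_2)$ equals $\bar c (W_1 - W_2) \neq 0$. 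The main obstacle is this last step, where one must handle $\cA_1 \otimes_A \cA_1$ simultaneously as an algebra and as an $A$-module; the two annihilation identities $x_i(W_1 - W_2) = 0$ are the decisive input that makes the argument go through.
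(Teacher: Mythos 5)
Your proposal is correct and follows essentially the same route as the paper: identify $B\otimes_A B$ with $R[x_2,z_1,z_2]/(z_1^n-1,\,z_2^n-1,\,(z_1-z_2)x_2)$, compute the first equalizer $\cA_1=R+xB$ (the sub-$R[x]$-algebra generated by the elements $z^ix$), and compare it with the image of $A$ to conclude complexity $1$ for $n=2$ and at least $2$ for $n\ge 3$. For $n=3$ the paper only states the structure of this equalizer and leaves the effectivity of $A\to\cA_1$ to the reader; your verification of that step --- writing $\cA_1=A+AW$ with $W=xz^2$, using $x_1W=x_2^2$, $x_2W=x_1^2$, $W^2=x_1x_2$ to get $x_i(W_1-W_2)=0$, and reducing modulo $(x_1,x_2)$ to the free $R$-module $R[W_1,W_2]/(W_1^2,W_2^2)$ --- is correct and cleanly completes that exercise.
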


\begin{proof}
We have $X=\Spec(B)$ and $Y=X/G=\Spec(A)$ with $B=R[x]$,
$A=R[y]$ and $y=x^n$. Let $C_\infty=B\otimes_AB=B[X]/(X^n-x^n)$
and $C_0=B[z]/(z^n-1)$. The question is about the finite
morphism of $B$-algebras $\rho:C_\infty\to C_0$ with
$\rho(X)=zx$. Note that $\rho$ identifies $C_\infty$
with the sub-$B$-algebra of $B[z]/(z^n-1)$ generated by $zx$.
We have
$C_0\otimes_{C_\infty} C_0=B[z_1,z_2]/(z_1^n-1,z_2^n-1,x(z_1-z_2))$
with the maps $\alpha,\beta:C_0\to C_0\otimes_{C_\infty} C_0$
given by $\alpha(z)=z_1$ and $\beta(z)=z_2$. Let
$C_1\subset C_0$ be the equalizer
of these maps, this is the sub-$B$-algebra
generated by the elements $y_i:=z^ix$ for $i=1,\dots,n-1$.
If $n=2$ we have $C_\infty=C_1$, so the complexity is 1.
If $n\ge 3$ we have $z^2x\in C_1\setminus C_\infty$ and the
complexity is at least~2. In general $C_1$ has a fairly
complicated structure. We leave it to the reader to check
that for $n=3$ we have
$C_1=B[y_1,y_2]/(y_1^3-x^3,y_1y_2-x^2,y_2^2-xy_1)$
and that the map $C_\infty\to C_1$ is effective.
\end{proof}

Finally an example in characteristic $p$.

\begin{lemma}
Let $R$ be a ring of characteristic $p>0$. Let $X=\AA^1_R$ be
the affine line over $R$, with the action of $G=\alpha_{p,R}$
given by $G\times X\to X$, $(a,x)\mapsto \frac{x}{1+ax}$. Then
the groupoid $G\times X\rightrightarrows X$ has complexity~$1$ if $p=2$
and at least~$2$ otherwise.
\end{lemma}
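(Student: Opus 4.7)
The plan is to compute $\im(\rho)$ and the equalizer $\cA_1 = \ker(C_0 \rightrightarrows C_0 \otimes_{C_\infty} C_0)$ explicitly, then invoke Proposition~\ref{prop:canonical_factorization}(3) via Corollary~\ref{coro:uniform_effective_epi}. The derivation associated to the $\alpha_p$-action is $D = -x^2\,\partial/\partial x$, with $D^k(x) = (-1)^k k!\, x^{k+1}$; thus $D^p = 0$ and the invariant subring is $A = \ker D = R[x^p]$. Setting $B = R[x]$ and $y = x^p$, the freshman's dream yields $C_\infty = B \otimes_A B = B[u]/(u^p)$ with $u = X - x$, while $C_0 = B[a]/(a^p)$; the map $\rho$ sends $u$ to $s := x/(1 + ax) - x = -ax^2/(1 + ax) = \sum_{k=1}^{p-1}(-a)^k x^{k+1}$.

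The central step is identifying $C_0 \otimes_{C_\infty} C_0$. Introducing $u = a_1$ and $v = a_1 - a_2$, the Frobenius identity $(u-v)^p = u^p - v^p$ collapses the relations $a_1^p = a_2^p = 0$ to $u^p = v^p = 0$. A direct calculation in M\"obius form,
\[
\rho(u) \otimes 1 - 1 \otimes \rho(u) \;=\; -\frac{a_1 x^2}{1 + a_1 x} + \frac{a_2 x^2}{1 + a_2 x} \;=\; \frac{-vx^2}{(1 + a_1 x)(1 + a_2 x)},
\]
together with the fact that each $1 + a_i x$ is a unit (its nontrivial part being nilpotent), shows that this relation generates the same ideal as $vx^2$. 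Hence $C_0 \otimes_{C_\infty} C_0 = Q := B[u, v]/(u^p, v^p, vx^2)$, which as a $B$-module decomposes as $\bigoplus_{0 \le i, j \le p-1} B_{ij}\, u^i v^j$ with $B_{i0} = B$ and $B_{ij} = B/x^2 B$ for $j \ge 1$. For $c = \sum_{i=0}^{p-1} c_i a^i$, we have $\alpha(c) - \beta(c) = v\, H(u, v)$ with $H(u, v) := \sum_{i=1}^{p-1} c_i P_i(u, v)$ and $u^i - (u-v)^i = v\, P_i(u, v)$; the polynomial $P_i$ has leading term $i\, u^{i-1}$. The condition $vH = 0$ in $Q$ translates to every coefficient of $H$ lying in $x^2 B$; extracting the $u^{i-1}$ coefficient gives $i\, c_i \in x^2 B$, and since $1, 2, \ldots, p-1$ are units in $R$ in characteristic $p$, this forces $c_i \in x^2 B$ for each $i \ge 1$ (and this is plainly also sufficient). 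Thus $\cA_1 = \{c_0 + \sum_{i=1}^{p-1} c_i a^i : c_i \in x^2 B\}$.

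Finally, expanding $s^k = (-1)^k a^k x^{2k}(1 + ax)^{-k}$ identifies $\im(\rho)$ as the free $B$-submodule $\bigoplus_{k=0}^{p-1} B \cdot s^k$ with lower-triangular transition matrix $M_{mk} = (-1)^m \binom{m-1}{k-1} x^{m+k}$ for $1 \le k \le m \le p-1$. If $p = 2$ then $M = \mathrm{diag}(1, x^2)$, so $\im(\rho) = B \oplus x^2 B \cdot a = \cA_1$; Corollary~\ref{coro:uniform_effective_epi} then gives that $\rho$ is an effective epimorphism, and since $\im(\rho) \neq C_0$ it is not an isomorphism, so the complexity equals~$1$. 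If $p \ge 3$, the element $c := a^{p-1} x^2$ lies in $\cA_1$ (its only nonzero component, $c_{p-1} = x^2$, belongs to $x^2 B$) but not in $\im(\rho)$: every entry $M_{p-1, k}$ has $x$-valuation $p-1+k \ge p$, so the $a^{p-1}$-component of any element of $\im(\rho)$ is divisible by $x^p$, while $x^2$ is not. Thus $\cA_1 \supsetneq \im(\rho)$, so $\rho$ is not an effective epimorphism and Proposition~\ref{prop:canonical_factorization}(3) gives complexity at least~$2$. The main obstacle is this computation of $C_0 \otimes_{C_\infty} C_0$: the slick identification of the relation as $vx^2\cdot(\mathrm{unit})$, made possible by the M\"obius form of the action, is what makes the equalizer tractable in arbitrary characteristic.
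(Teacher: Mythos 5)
Your proof is correct, and at the strategic level it follows the same route as the paper: everything is reduced to deciding exactness of $B$-algebra sequence $C_\infty\xrightarrow{\ \rho\ }C_0\rightrightarrows C_0\otimes_{C_\infty}C_0$ (via Lemma~\ref{lemma:effective_epi_and_functions}, Corollary~\ref{coro:uniform_effective_epi} and Proposition~\ref{prop:canonical_factorization}(3)), with the same presentation $C_0\otimes_{C_\infty}C_0=B[a_1,a_2]/(a_1^p,a_2^p,x^2(a_1-a_2))$ — which you actually derive (the M\"obius/unit argument), whereas the paper writes it down and likewise only asserts the equalizer description $x^2\mid Q_i$ that you prove via the $(u,v)=(a_1,a_1-a_2)$ change of variables. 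Where you genuinely diverge is the treatment of $\im(\rho)$: the paper inverts $x$, writes $a=X^{-1}-x^{-1}$ and characterizes the image by explicit divisibility conditions on the coefficients $Q_i$, then compares the two full descriptions; you instead expand $s^k=(-1)^k a^k x^{2k}(1+ax)^{-k}$ in the basis $\{a^m\}$, getting a triangular transition matrix (which, as a bonus, yields the injectivity of $\rho$ that the paper merely asserts), and for $p\ge 3$ you avoid characterizing the image altogether by exhibiting the single witness $a^{p-1}x^2\in\cA_1\setminus\im(\rho)$ via an $x$-adic valuation bound on the top component. Your route stays inside $C_0$ with no localization and is somewhat more economical for the $p\ge3$ direction; the paper's computation gives a complete description of $\im(\rho)$, at the cost of the localization bookkeeping. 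Two cosmetic points: you reuse the letter $u$ both for $X-x\in C_\infty$ and for $a_1$ in the tensor square, and when you say ``$\rho$ is an effective epimorphism'' you mean the morphism of spaces $j_Y=\Spec(\rho)$; neither affects the argument.
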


\begin{proof}
We have $X=\Spec(B)$ and $Y=X/G=\Spec(A)$ with $B=R[x]$,
$A=R[y]$ and $y=x^p$. The question is about exactness of the
sequence of $B$-algebras:
\[
\xymatrix{
\frac{B[X]}{X^p-x^p} \ar[r]^-\rho &
\frac{B[a]}{a^p} \ar@<1mm>[r]^-\alpha \ar@<-1mm>[r]_-\beta &
\frac{B[a_1,a_2]}{a_1^p,a_2^p,x^2(a_1-a_2)}}
\]
with $\rho(X)=\frac{x}{1+ax}$, $\alpha(a)=a_1$,
$\beta(a)=a_2$. In order to find the image of $\rho$ we
compute in the localizations with respect to $x$. Since $\rho$
is injective we write $X$ for $\rho(X)$. From
$X=\frac{x}{1+ax}$ we get $a=X^{-1}-x^{-1}$ so if
$Q(a)=\sum_{i=0}^{p-1}Q_ia^i$ is the image of some $P$
under $\rho$ then we have:
\[
P(X)=Q(X^{-1}-x^{-1})=
\sum_{i=0}^{p-1}(-1)^ix^{-i}Q_i
+\sum_{j=1}^{p-1}
\left(\sum_{i=j}^{p-1}(-1)^{i-j}{i\choose j}
x^{-p-i+j}Q_i\right)X^{p-j}.
\]
We find that the image of $\rho$ is the set of $Q$ such that
$x^{p-1}$ divides $\sum_{i=1}^{p-1}(-1)^ix^{p-1-i}Q_i$
and $x^{2p-1-j}$ divides $\sum_{i=j}^{p-1}(-1)^i{i\choose j}
x^{p-1-i}Q_i$ for all $j=1,\dots,p-1$. This may be rewritten
as the set of $Q$ such that $x^{i+1}$ divides $Q_i$ for all
$i=1,\dots,p-1$ (say $Q_i=x^{i+1}R_i$ for some $R_i\in B$)
and $x^{p-1-j}$ divides
$\sum_{i=j}^{p-1}(-1)^i{i\choose j}R_i$ for all $j=1,\dots,p-1$.
On the other hand,
the equalizer of $\alpha$ and $\beta$
is the set of $Q$ such that $x^2$ divides $Q_i$ for all $i=1,\dots,p-1$.
These sets
are equal if and only if $p=2$.
\end{proof}

\section{Main theorems}
\label{section:main theorems}

After the work of the previous sections, we are ready to give
an answer to the descent question from the introduction, for
groupoids of complexity at most $1$. It applies to the objects of
a stack whose isomorphism sheaves are representable: see
Theorem~\ref{theorem:descent_along_quotient}.

\subsection{Equivariant objects} \label{ss:equivariant objects}

\begin{definition}
Let $s,t:R\rightrightarrows X$ be a groupoid and
$c,\pr_1,\pr_2:R\times_{s,X,t}R\to R$ the composition and projections.
Let $\cC\to \AlgSp$ be a category fibered over the category of algebraic
spaces and let $\cF\in\cC(X)$ be an object.
An {\em $R$-linearization} on $\cF$
is an isomorphism $\phi:s^*\cF\isomto t^*\cF$ satisfying the cocycle
condition $c^*\phi=(\pr_1^*\phi)\circ(\pr_2^*\phi)$,
meaning that the following triangle is commutative~:
\begin{center}
\begin{tikzpicture}[scale=1.3,text height=1.5ex, text depth=0.5ex]
\node (A1) at (-0.5,0) {$(s\pr_2)^*\cF$};
\node (A2) at (0.7,0) {$=(sc)^*\cF$};
\node (B1) at (3.5,0) {$(tc)^*\cF=$};
\node (B2) at (4.65,0) {$(t\pr_1)^*\cF$};
\node (C1) at (1.3,-2) {$(t\pr_2)^*\cF$};
\node (C3) at (2.7,-2) {$=(s\pr_1)^*\cF.$};
\draw[-myto,font=\scriptsize] (A2) edge node[pos=.55,above] {$c^*\phi$} (B1);
\draw[-myto,font=\scriptsize] (A1) edge node[pos=.3,below=2mm] {$\pr_2^*\phi$} (C1);
\draw[-myto,font=\scriptsize] (C3) edge node[pos=.7,below=2mm] {$\pr_1^*\phi$} (B2);
\end{tikzpicture}
\end{center}
An {\em $R$-equivariant object of $\cC$ over $X$} is an object
$\cF\in\cC(X)$ together with an $R$-linearization. We write
$\cC(R,X)$ for the category of $R$-equivariant objects.
\end{definition}


\begin{exam} \label{example_linearization}
Let $R\rightrightarrows X$ be a groupoid as above and let $\pi:X\to Y$ be
an $R$-invariant morphism, i.e., $\pi s=\pi t$. Then for any
object $\cG\in \cC(Y)$, the pullback $\cF=\pi^*\cG$ is endowed
with a canonical $R$-linearization
$\phi:s^*\cF=s^*\pi^*\cG \simeq (\pi s)^*\cG=
(\pi t)^*\cG \simeq t^*\pi^*\cG=t^*\cF$.
\end{exam}

We recall the notion of a {\em square}, which is closely related
to that of $R$-equivariant object.

\begin{definition}
A morphism of groupoids $f:(R',X')\to (R,X)$ is called a {\em square} or
{\em cartesian}
when the commutative diagram
\[
\xymatrix{R' \ar[r] \ar[d]_f & X' \ar[d]^f \\
R \ar[r] & X}
\]
is cartesian, if we take for horizontal maps either both source maps,
or both target maps.
\end{definition}

To illustrate these definitions, take for $\cC$ the category of algebraic spaces
over algebraic spaces. For $(X'\to X)\in\cC(X)$, the following lemma makes it clear
that an $R$-linearization on
$X'$ is the same as a lift of the $R$-action to $X'$.

\begin{lemma}\label{lemma:square}
Let $s,t:R\rightrightarrows X$ be a groupoid. Let $(f:X'\to X,\phi:s^*X'\isomto t^*X')$
be an $R$-equivariant $X$-space.
Complete $X'$ to a quintuple $(R',X',s',t',c')$ as follows~:
\begin{trivlist}
\itemn{1} $R'=s^*X'=R\times_{s,X,f}X'$ whose $T$-points are pairs $(\alpha,x')$
with $\alpha\in R(T)$ and $x'\in X'(T)$,
\itemn{2} $s'=\pr_2:R'\to X'$,
\itemn{3} $t'=\pr_2\circ\phi:R\times_{s,X,f}X'\too R\times_{t,X,f}X'\too X'$,
\itemn{4} $c':R'\times_{s',X',t'}R'\too R'$ defined on $T$-points
by $c'\bigl((\alpha,x'),(\beta,y')\bigr)=(\alpha\beta,y')$.
\end{trivlist}
Then $(R',X',s',t',c')$ is a groupoid and the morphism
$(R',X')\to (R,X)$ is a square morphism of groupoids.

Conversely, a square morphism of groupoids $(R',X')\to (R,X)$ gives an
$R$-equivariant $X$-space $(X'\to X,s^*X'\isomto R'\isomto t^*X')$.
\end{lemma}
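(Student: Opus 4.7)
The plan is to check the forward construction by verifying the groupoid axioms for $(R',X',s',t',c')$ term by term, with the cocycle condition on $\phi$ providing the associativity of $c'$. The reverse construction will come for free once we observe that a square morphism of groupoids exhibits $R'$ simultaneously as $s^*X'$ and as $t^*X'$, so that composing the two identifications produces the desired linearization. I expect the only real work to be verifying that the cocycle condition is exactly what is needed for $c'$ to be well-defined and associative; everything else is bookkeeping.

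For the forward direction, I first note that $s'=\pr_2$ is tautologically compatible with $s$, so the source square is cartesian by definition. For the target, a $T$-point of $s^*X'$ is a pair $(\alpha,x')$ with $s(\alpha)=f(x')$, which $\phi$ sends to $(\alpha,x'')$ with $f(x'')=t(\alpha)$; setting $t'(\alpha,x'):=x''$, the fact that $\phi$ is an isomorphism forces $R' \isomto R\times_{t,X,f}X' = t^*X'$, so the target square is cartesian as well. The unit and inverse are forced to be $e'(x')=(e(f(x')),x')$ and $i'(\alpha,x')=(i(\alpha),t'(\alpha,x'))$; their axioms follow from the groupoid axioms on $R$ together with the normalization identity $\phi(e(f(x')),x')=(e(f(x')),x')$, which is the cocycle identity applied with one factor equal to an identity arrow. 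The crucial check is associativity of $c'$: the formula $c'((\alpha,x'),(\beta,y'))=(\alpha\beta,y')$ is well-defined precisely when $y'=t'(\alpha,x')$, and iterating composition requires $t'(\alpha\beta,x')=t'(\beta,t'(\alpha,x'))$, which is exactly the cocycle identity $c^*\phi=\pr_1^*\phi\circ\pr_2^*\phi$ evaluated on $T$-points.

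For the converse, given a square morphism $g:(R',X')\to(R,X)$, the definition of square supplies two cartesian squares (one formed with sources, one with targets), yielding canonical isomorphisms $R' \isomto s^*X'$ and $R' \isomto t^*X'$; composing the first with the inverse of the second defines $\phi:s^*X'\isomto t^*X'$. The cocycle identity then follows from the associativity of composition in $R'$ together with the fact that $g$ preserves composition. The main obstacle throughout is purely notational: the relevant fiber products over $X$ are formed with different structural morphisms, and one must keep strict track of which coordinate of a tuple sits over which copy of $X$. Once these identifications are pinned down, every step reduces either to the groupoid axioms on $R$ or to the cocycle identity on $\phi$, and the two constructions are visibly inverse to each other.
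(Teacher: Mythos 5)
Your proof is correct in substance, but there is essentially no in-paper argument to compare it with: the paper disposes of this lemma with a one-line citation to the Stacks Project (Tag 0APC), and your direct verification is precisely the standard argument recorded there. The skeleton is right: the source square is cartesian by the very definition of $R'=R\times_{s,X,f}X'$, the target square is cartesian because $\phi$ is an isomorphism over $R$ onto $t^*X'=R\times_{t,X,f}X'$, the unit condition $e^*\phi=\id$ follows from the cocycle identity restricted to pairs with one identity factor together with invertibility of $\phi$, the cocycle identity is exactly the statement that $t'$ of a composite equals the iterate of $t'$, and the converse is obtained by composing the two cartesian identifications $R'\isomto s^*X'$ and $R'\isomto t^*X'$, with the cocycle condition coming from preservation of composition.

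Two bookkeeping points should be fixed before this is spliced in. First, with the paper's conventions ($c$ is defined on $R\times_{s,X,t}R$ and composites are written like composition of functions), composability of $\bigl((\alpha,x'),(\beta,y')\bigr)$ in $R'\times_{s',X',t'}R'$ reads $x'=t'(\beta,y')$, and the key identity is $t'(\alpha\beta,y')=t'\bigl(\alpha,t'(\beta,y')\bigr)$; you wrote both with $\alpha$ and $\beta$ interchanged, i.e.\ in the opposite composition convention, so the formulas need to be reoriented to match items (1)--(4) of the statement. Second, the formula $c'\bigl((\alpha,x'),(\beta,y')\bigr)=(\alpha\beta,y')$ lands in $R'$ automatically, since $s(\alpha\beta)=s(\beta)=f(y')$; what the cocycle condition buys is not well-definedness of a single composition but the compatibility $t'\circ c'=t'\circ\pr_1$, which is what makes iterated composability, and hence the associativity check, meaningful. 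Neither point is a gap in the mathematics, only in the notation.
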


\begin{proof}
This is \cite[\spref{0APC}]{SP}.
\end{proof}

\subsection{Descent along the quotient}

Let $s,t:R\rightrightarrows X$ be a flat locally finitely presented groupoid. In this
section we are interested in the problem of descending objects
of a category $\cC$ fibered over the category of algebraic spaces
along the quotient map $\pi:X\to X/R=Y$. We know that for any
object $\cG\in \cC(Y)$, the pullback $\cF=\pi^*\cG$ is endowed
with a canonical $R$-linearization (example~\ref{example_linearization}).
Conversely, if $\cF\in \cC(X)$ then the datum of an
$R$-linearization allows to descend~$\cF$ to an object based on $[X/R]$,
the quotient {\em as an algebraic stack}, but is not enough to
descend~$\cF$ to an object of~$\cC(Y)$ in general.
Let $\cC(R,X)$ be the category of $R$-equivariant objects
$(\cF,\phi)$. Descent Theory as formulated by Grothendieck seeks
to characterize the essential image
of the pullback functor $\pi^*:\cC(Y)\to \cC(R,X)$.
When $\cC$ is the category of \'etale morphisms of spaces,
and without additional conditions on $R\rightrightarrows X\to Y$, Keel and
Mori~\cite[Lem.~6.3]{KM97}, Koll\'ar~\cite[\S~2]{Ko97},
Rydh~\cite[\S~3]{Ry13} obtain such a characterization in terms
of fixed-point reflecting $R$-equivariant objects. In a different
direction, we shall prove that if $R\rightrightarrows X$ has complexity at
most $1$ and flat quotient $X\to Y$, there is a nice description
of the image of $\pi^*$ for very general stacks $\cC$.

\begin{definition}
Let $\Sigma=\Stab_R$ be the stabilizer of the groupoid, let
$a:\Sigma\to R$ be the inclusion, and put $b=sa=ta$.
We denote by $\cC(R,X)^{\Sigma}$ the full
subcategory of $\cC(R,X)$ consisting of $R$-equivariant objects
$(\cF,\phi)$ such that the action of $\Sigma$ is trivial, meaning
that the following map is the identity:
\[
b^*\cF\simeq a^*s^*\cF \stackrel{a^*\phi}{\tooo}a^*t^*\cF\simeq b^*\cF.
\]
\end{definition}

To dispel the dryness of the formalism of groupoids, we emphasize
that if $\cC$ is the category of schemes or algebraic spaces, and
if the groupoid is given by the action of a group~$G$, then a
$G\times X$-linearization on some $X'\in \cC(X)$ is equivalent to a lift of
the action of $G$ to $X'$ and the action of $\Sigma$ is trivial in
the above sense if and only if it is trivial in the usual sense.

\begin{lemma}
The functor $\pi^*:\cC(Y)\to \cC(R,X)$
takes values in $\cC(R,X)^{\Sigma}$.
\end{lemma}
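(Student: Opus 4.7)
The plan is to unwind the definition of the canonical $R$-linearization on a pulled-back object and observe that, once restricted along $a\colon \Sigma\to R$, all the constituent isomorphisms collapse to the identity by functoriality of pullbacks.

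First, recall the setup: by definition of $\Sigma$ as the preimage of the diagonal under $j=(t,s)$, the two composites $sa$ and $ta$ agree and both equal $b$. Moreover, since $\pi\colon X\to Y$ is $R$-invariant we have $\pi s=\pi t$; composing with $a$ this gives $\pi b=\pi s a=\pi t a=\pi b$, tautologically. Next, for $\cG\in\cC(Y)$ and $\cF=\pi^*\cG$, the canonical $R$-linearization $\phi\colon s^*\cF\isomto t^*\cF$ is, by construction, the composite
\[
s^*\pi^*\cG \;\xrightarrow{\;\sim\;}\; (\pi s)^*\cG \;=\; (\pi t)^*\cG \;\xrightarrow{\;\sim\;}\; t^*\pi^*\cG,
\]
where the two outer isomorphisms are the coherence isomorphisms of the fibered category $\cC\to\AlgSp$ for the composable pairs $(\pi,s)$ and $(\pi,t)$.

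The core step is to pull back $\phi$ along $a$. Applying $a^*$ to the display above and using the coherence isomorphisms for the composable pairs $(s,a)$ and $(t,a)$, the map $a^*\phi\colon a^*s^*\cF\isomto a^*t^*\cF$ is identified with the composite
\[
(sa)^*\pi^*\cG \;\xrightarrow{\;\sim\;}\; (\pi s a)^*\cG \;=\; (\pi t a)^*\cG \;\xrightarrow{\;\sim\;}\; (ta)^*\pi^*\cG.
\]
Using $sa=ta=b$, both outer arrows become the coherence isomorphism for the composable pair $(\pi,b)$, one as itself and the other as its inverse. By the cocycle/coherence axiom of a fibered category they compose to the identity of $(\pi b)^*\cG=b^*\cF$. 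Therefore, under the canonical identifications $b^*\cF\simeq a^*s^*\cF$ and $a^*t^*\cF\simeq b^*\cF$, the map $a^*\phi$ is the identity, which is exactly what it means for the $\Sigma$-action to be trivial.

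I expect no real obstacle here: the entire content is bookkeeping with the cleavage of $\cC\to\AlgSp$, and the only point worth flagging is that the two outer ``canonical isomorphisms'' in the display genuinely cancel, which is a direct consequence of the coherence axioms for a cloven fibered category. Finally, one should also check that the cocycle condition is automatic for this $\phi$ (so that it is indeed an $R$-linearization), but that is part of the content of Example~\ref{example_linearization} and is verified by the analogous coherence argument applied to the triple $(\pi,t,\pr_1)$ versus $(\pi,s,\pr_2)$ over $R\times_{s,X,t}R$.
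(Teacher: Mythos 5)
Your proof is correct and follows essentially the same route as the paper: both arguments reduce the claim to the compatibility of the coherence isomorphisms $(fg)^*\isomto g^*f^*$ for the triple compositions $\pi sa$ and $\pi ta$, glue along $\pi s=\pi t$, and conclude from $sa=ta=b$ that the two outer coherence maps cancel, so $a^*\phi$ is the identity on $b^*\cF$. The paper packages this as one glued commutative diagram of functors, while you trace the composite arrow-by-arrow, but the content is identical.
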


\begin{proof}
We have to show that the canonical $R$-linearization of a pullback
$\cF=\pi^*\cG$ becomes trivial when restricted to $\Sigma$. Recall
from \cite[A.1]{Gr59} or \cite[\spref{003N}]{SP},
that in a fibered category, there are isomorphisms
$(fg)^*\isomto g^*f^*$ between pullback functors, and commutative
squares giving compatibility for triple compositions~:
\[
\xymatrix{
(fgh)^* \ar[r] \ar[d] & (gh)^*f^* \ar[d] \\
h^*(fg)^* \ar[r] & h^*g^*f^*.}
\]
We write the two squares picturing such compatibility for the two
compositions $\pi sa:\Sigma\to Y$ and $\pi ta:\Sigma\to Y$, taking
advantage of the fact that $\pi s=\pi t$ in order to glue them
on one side:
\[
\xymatrix@C=20mm{
(sa)^*\pi^* \ar[d] 
\ar@/^2.5pc/[rr]^-{\id}
& (\pi sa)^*=(\pi ta)^*
\ar[l] 
\ar[r] 
\ar[d]
& (ta)^*\pi^*
\ar[d] \\ 
a^*s^*\pi^*
\ar@/_2.5pc/[rr]_-{a^*\phi}
& a^*(\pi s)^*=a^*(\pi t)^*
\ar[l] 
\ar[r] 
& a^*t^*\pi^*.}
\]
Since $sa=ta$ we see that the top row is the identity.
The commutativity of the exterior diagram is exactly the claim
we want to prove.
\end{proof}

\begin{theorem} \label{theorem:descent_along_quotient}
Let $R\rightrightarrows X$ be a flat, locally finitely presented groupoid space
with finite stabilizer $\Sigma\to X$ and complexity at most $1$.
Assume that the quotient $\pi:X\to Y=X/R$ is flat (resp.\ flat
and locally of finite presentation). Let $\cC\to\AlgSp$ be a stack in
categories for the fpqc topology (resp.\ for the fppf topology).
\begin{trivlist}
\itemn{1} If the sheaves of homomorphisms $\cH om_{\cC}(\cF,\cG)$
have diagonals which are representable by algebraic spaces, then
the pullback functor $\pi^*:\cC(Y)\to \cC(R,X)^{\Sigma}$ is fully faithful.
\itemn{2} If the sheaves of isomorphisms $\cI som_{\cC}(\cF,\cG)$ 
are representable by algebraic spaces, then the pullback functor
$\pi^*:\cC(Y)\to \cC(R,X)^{\Sigma}$ is essentially surjective.
\end{trivlist}
In particular if $\cC$ is a stack in groupoids with representable
diagonal, the functor $\pi^*$ is an equivalence.
\end{theorem}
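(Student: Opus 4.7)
The plan is to deduce both parts from the effective epimorphism property of $j_Y\colon R\to X\times_Y X$ (the translation of ``complexity~$\le 1$'') combined with faithfully flat descent along $\pi\colon X\to Y$ for the stack $\cC$. Throughout, write $\pi_1,\pi_2\colon X\times_Y X\to X$ for the two projections; with the convention $j_Y=(t,s)$ one has $\pi_1 j_Y=t$ and $\pi_2 j_Y=s$. Since $\pi$ is flat and qcqs (resp.\ flat and locally of finite presentation), it is an fpqc (resp.\ fppf) cover, so the stack property of $\cC$ identifies $\cC(Y)$ with the category of objects of $\cC(X)$ equipped with a descent datum on $X\times_Y X$ satisfying the cocycle on $X\times_Y X\times_Y X$.

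\textbf{Full faithfulness.} Given $\cG_1,\cG_2\in\cC(Y)$ and a morphism $\varphi\colon\pi^*\cG_1\to\pi^*\cG_2$ in $\cC(R,X)^\Sigma$, descent along $\pi$ produces a unique morphism $\cG_1\to\cG_2$ provided $\pi_1^*\varphi=\pi_2^*\varphi$ on $X\times_Y X$. Let $E\into X\times_Y X$ be the equalizer of these two sections of $\cH:=\cH om_\cC(\pi^*\cG_1,\pi^*\cG_2)$. It is the pullback of the diagonal $\cH\to\cH\times_{X\times_Y X}\cH$, hence, by the hypothesis in (1), a monomorphism of algebraic spaces. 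The $R$-equivariance of $\varphi$ (with canonical linearizations from pullbacks) gives $j_Y^*\pi_1^*\varphi=s^*\varphi=t^*\varphi=j_Y^*\pi_2^*\varphi$, so $j_Y$ factors through $E$. Because $j_Y$ is an effective epimorphism of algebraic spaces, a standard argument (an effective epi factoring through a mono is sent through its universal coequalizer property to a splitting of the mono) forces $E\to X\times_Y X$ to be an isomorphism, whence the desired equality.

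\textbf{Essential surjectivity.} Let $(\cF,\phi)\in\cC(R,X)^\Sigma$. Set $\cI:=\cI som_\cC(\pi_1^*\cF,\pi_2^*\cF)$, an algebraic space over $X\times_Y X$ by the hypothesis in (2). Then $\phi$ is a morphism $R\to\cI$ over $X\times_Y X$ through $j_Y$. Since $j_Y$ is an effective epimorphism of algebraic spaces, descending $\phi$ to a section $\psi$ of $\cI$ amounts to checking that $\phi$ coequalizes the kernel pair $R\times_{X\times_Y X}R\rightrightarrows R$. A pair $(\alpha,\beta)$ in the kernel pair satisfies $s\alpha=s\beta$ and $t\alpha=t\beta$, so $\beta=\alpha\gamma$ for a unique $\gamma\in\Sigma$ above $s\alpha$; the cocycle for $\phi$ and the $\Sigma$-triviality give $\phi(\beta)=\phi(\alpha)\circ\phi(\gamma)=\phi(\alpha)$. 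Thus $\psi\colon\pi_1^*\cF\isomto\pi_2^*\cF$ exists with $j_Y^*\psi=\phi$.

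It remains to verify the cocycle $\pr_{13}^*\psi=\pr_{23}^*\psi\circ\pr_{12}^*\psi$ on $X^{(3)}:=X\times_Y X\times_Y X$. Consider the map $j_Y^{(2)}\colon R\times_{s,X,t}R\to X^{(3)}$ sending a composable pair $(\alpha,\beta)$ to $(t\alpha,s\alpha,s\beta)$. Under $(j_Y^{(2)})^*$ the three pullbacks of $\psi$ become $\phi(\alpha),\phi(\beta),\phi(\alpha\beta)$, so the cocycle for $\psi$ pulls back to the cocycle for $\phi$, which holds by assumption. To conclude, it suffices that $j_Y^{(2)}$ be an epimorphism of algebraic spaces. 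Factor it as
\[
R\times_{s,X,t}R \too (X\times_Y X)\times_{\pi_2,X,t}R \too (X\times_Y X)\times_{\pi_2,X,\pi_1}(X\times_Y X)=X^{(3)};
\]
the first arrow is the base change of $j_Y$ along the flat map $t\colon R\to X$, and the second is the base change of $j_Y$ along $\pi_2\colon X\times_Y X\to X$, which is flat thanks to flatness of $\pi$. By Corollary~\ref{coro:uniform_effective_epi} both are effective epimorphisms, hence so is their composite. Finally, descending $(\cF,\psi)$ along the fpqc (resp.\ fppf) cover $\pi$ via the stack property produces $\cG\in\cC(Y)$ with $\pi^*\cG\simeq(\cF,\phi)$.

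\textbf{Main obstacle.} The delicate point is the cocycle verification in essential surjectivity: transferring the groupoid cocycle for $\phi$ on composable pairs in $R$ into the descent cocycle for $\psi$ on the triple fiber product $X^{(3)}$. This is where flatness of $\pi$ is used in an essential way, since it guarantees that the projections $X\times_Y X\to X$ are flat, so that the two successive base changes of $j_Y$ required to present $j_Y^{(2)}$ remain effective epimorphisms through Corollary~\ref{coro:uniform_effective_epi}.
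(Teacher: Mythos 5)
Your proof is correct and takes essentially the same route as the paper: full faithfulness by factoring the effective epimorphism $j_Y$ through the monomorphism obtained from the representable diagonal of $\cH om$, and essential surjectivity by descending $\phi$ along $j_Y$ (kernel pair $\simeq \Sigma\times_{(s,t)}R$ plus triviality of the $\Sigma$-action) and then verifying the cocycle after pullback along $j_Y\times j_Y$, exactly as in the paper. The only deviation is the final epimorphism check: the paper observes that $j_Y\times j_Y$ is finite, surjective and schematically dominant (using flatness of $\pi$) hence an epimorphism, whereas you factor it as two flat base changes of $j_Y$ via Corollary~\ref{coro:uniform_effective_epi}; this works equally well, though your aside that the composite is again an \emph{effective} epimorphism is both unjustified in general and unneeded, since only the plain epimorphism property is required and that does compose.
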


In Section~\ref{examples} many examples were given that satisfy
the assumptions of the Theorem.

\begin{remark} \label{remark:philosophy}
This result is not really an alternative to faithfully flat descent,
but rather a refinement of it. Indeed, faithfully flat descent
{\em does} provide an answer to the question of the image of~$\pi^*$:
it is the particular case of our theorem for the flat groupoid
$R_1:=X\times_Y X\rightrightarrows X$ whose stabilizer is trivial. The
category $\cC(R_1,X)$ comprises objects with descent data, the
latter being isomorphisms on products $X\times_YX$ with conditions
on triple products $X\times_YX\times_YX$. However, it is often the
case in concrete geometric situations that there is a natural action
of a group or groupoid $R\ne R_1$ such that it is much easier to handle
$R$-equivariant objects. In these situations, the functor of points
of the quotient $Y=X/R$ is usually hard to describe, as well as the
square and the cube of $X$ over $Y$, making $\cC(R_1,X)$ less
convenient.
\end{remark}

\begin{proof}
The assumptions on $\cC$ and $\pi$ imply that effective descent
along $\pi$ holds in $\cC$~;
in the fpqc case note that $\pi$ is an fpqc covering since
it is open (\ref{theo:quotient_by_ff_groupoid}) and faithfully
flat, see e.g.\ Vistoli~\cite[Prop.~2.35]{Vi05}.
Since the map $j_Y:R\to X\times_Y X$
will come up repeatedly, we write simply $j:=j_Y$.

\smallskip

\noindent (1) Let $\cG_1,\cG_2\in\cC(Y)$ and let
$(\cF_1,\phi_1),(\cF_2,\phi_2)\in \cC(R,X)^{\Sigma}$ be their
pullbacks. We must prove that the map:
\[
\Hom_{\cC(Y)}(\cG_1,\cG_2)\too
\Hom_{\cC(R,X)^{\Sigma}}\bigl((\cF_1,\phi_1),(\cF_2,\phi_2)\bigr)
\]
is bijective.
Injectivity is a consequence of the fact that $\pi:X\to Y$
is a covering for the topology for which $\cC$ is a stack,
and the fact that $\Hom_{\cC(Y)}(\cG_1,\cG_2)$ is a separated
presheaf. For surjectivity let
$f:(\cF_1,\phi_1)\to(\cF_2,\phi_2)$
be a morphism. Let $\pi_1,\pi_2:X\times_YX\to X$ be the
projections. By descent it is enough to prove that
$\pi_1^*f=\pi_2^*f$. By construction $\phi_i$ is the identity
of $q^*\sG_i$ for $i=1,2$, where $q=\pi s=\pi t$. Therefore
$s^*f=t^*f$. Write $H:=\cH om_{\cC(Y)}(\cG_1,\cG_2)$.
We have a commutative diagram:
\[
\xymatrix@C=15mm{
R \ar[r]^-j \ar[d] & X\times_YX \ar[d]^-d \\
H \ar[r]^-{\Delta} & H\times_Y H}
\]
where $d:=(\pi_1^*f,\pi_2^*f)$.
Since the diagonal $\Delta$ is assumed to be representable,
the fiber product
\[P:=H\times_{(\Delta,d)} X\times_YX\]
is representable and the map $j$ factors through a map
$k:P \to X\times_YX$. Since the groupoid has complexity
at most $1$, the map $j$ is an effective
epimorphism. It follows by formal arguments that $k$ has
the same property. Since $k$ is a pullback of the
diagonal, it is also a monomorphism. Thus, $k$ is an
isomorphism, and therefore $\pi_1^*f=\pi_2^*f$.

\smallskip

\noindent (2) Let
$(\cF,\phi)\in \cC(R,X)^{\Sigma}$ be an $R$-equivariant object.
Given that $R\times_{X\times_Y X} R$ is isomorphic
to $\Sigma\times_{(s,t)} R$ via the map
$(\varphi,\psi)\mapsto (\varphi\psi^{-1},\psi)$, the exact
sequence for the effective epimorphism $j$ is:
\[
\xymatrix@C=10mm{
\Sigma\times_{(s,t)} R \ar@<.5ex>[r]^-d\ar@<-.5ex>[r]_-{\pr_2} & R \ar[r]^-{j} & X\times_Y X.}
\]
Here $d$ is the composition
$\xymatrix@C=8mm{
\Sigma\times_{(s,t)} R \ar[r]^-{a\times\id}
& R\times_{(s,t)} R \ar[r]^-c & R}$.
It follows that for all $X\times_Y X$-algebraic spaces $I$,
we have an exact diagram of sets:
\[
\xymatrix{
\Hom(X\times_Y X,I) \ar[r]^-{j^*} & \Hom(R,I) \ar@<.5ex>[r]^-{d^*} \ar@<-.5ex>[r]_-{\pr_2^*} &
\Hom(\Sigma\times_{(s,t)} R,I).}
\]
Let $\pi_1,\pi_2:X\times_Y X\to X$ be the projections, and let
$I=\cI som_{X\times_Y X}(\pi_2^*\cF,\pi_1^*\cF)$. This is an algebraic
space by assumption, so from the above we obtain an exact diagram of sets:
\[
\xymatrix@C=10mm{
\Isom_{X\times_Y X}(\pi_2^*\cF,\pi_1^*\cF) \ar[r]^-{j^*} &
\Isom_R(s^*\cF,t^*\cF) \ar@<.5ex>[r]^-{d^*} \ar@<-.5ex>[r]_-{\pr_2^*} &
\Isom_{\Sigma\times R}(\pr_2^*s^*\cF,\pr_2^*t^*\cF).}
\]
Here we use the identifications $d^*s^*\cF\simeq (sd)^*\cF=(s\pr_2)^*\cF\simeq
\pr_2^*s^*\cF$ which need no further comment, and the similar
identifications with $s$ replaced by~$t$ which require the
observation that $td=t\pr_2$ since source and target agree on the
stabilizer. Now consider the cocycle condition
$c^*\phi=\pr_1^*\phi\circ\pr_2^*\phi$ on $R\times_{(s,t)} R$ satisfied by the
$R$-linearization $\phi:s^*\cF\to t^*\cF$. Then after pullback along
$a\times\id:\Sigma\times_{(s,t)} R\to R\times_{(s,t)} R$,
and since the stabilizer acts trivially on~$\cF$, this becomes:
\[
d^*\phi=(a\pr_1)^*\phi\circ\pr_2^*\phi=\pr_2^*\phi.
\]
Therefore by exactness of the diagram of $\Isom$ sets, $\phi$ descends to an
isomorphism $\psi:\pi_2^*\cF\isomto \pi_1^*\cF$. To conclude,
we use descent along the map $\pi:X\to Y$. For~$\psi$ to be
a descent datum with respect to $X\to Y$, it need only satisfy
the usual gluing condition:
\[
(\star) \quad \pi_{13}^*\psi=\pi_{12}^*\psi\circ\pi_{23}^*\psi
\]
where $\pi_{ij}:X\times_Y X\times_YX\to X\times_Y X$ are the projections.
In order to prove that this indeed holds, we
consider the commutative diagram:
\[
\xymatrix@C=15mm{
R\times_{s,X,t}R \ar[r]^-{j\times j} \ar@<1ex>[d] \ar@<-1ex>[d]_{\pr_1,\pr_2,c} \ar[d] &
X\times_Y X\times_YX
 \ar@<1ex>[d]^{\pi_{12},\pi_{23},\pi_{13}} \ar@<-1ex>[d] \ar[d] \\
R \ar[r]^j & X\times_YX.}
\]
On pulling back the relation $(\star)$ by $j\times j$ we obtain
the relation $c^*\phi=(\pr_1^*\phi)\circ (\pr_2^*\phi)$ which holds
by assumption. Since $X\to Y$ is flat, the morphism
$j\times j$ is finite, surjective and schematically dominant, hence
an epimorphism. Therefore Condition $(\star)$ holds, hence by
descent~$\cF$ is the pullback of an object $\cG\in\cC(Y)$.
\end{proof}

\begin{theorem} \label{theorem:descent_of_flat}
Let $\cC\to\AlgSp$ be one of the following stacks in categories:
\begin{trivlist}
\itemn{1} $\cC_1=\Flat$, the fppf stack whose objects over $X$ are flat
morphisms of algebraic spaces $X'\to X$.
\itemn{2} $\cC_2=\Flat^{\qa}$, the fpqc stack whose objects over $X$ are
quasi-affine flat morphisms of algebraic spaces $X'\to X$.
\end{trivlist}
Let $R\rightrightarrows X$ be a flat, locally finitely presented groupoid space
with finite stabilizer $\Sigma\to X$ and complexity at most $1$.
Assume that the quotient $\pi:X\to Y=X/R$ is flat and locally
of finite presentation if $\cC=\cC_1$, and flat if $\cC=\cC_2$.
Then the functor
$\pi^*:\cC(Y)\to \cC(R,X)^{\Sigma}$ is an equivalence.
\end{theorem}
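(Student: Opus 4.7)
The plan is to apply Theorem~\ref{theorem:descent_along_quotient}(1) for full faithfulness and to construct the descended object directly via Keel--Mori for essential surjectivity, since the $\cI som$-sheaves for arbitrary (resp. quasi-affine) flat morphisms are not known to be representable by algebraic spaces, precluding a direct application of Theorem~\ref{theorem:descent_along_quotient}(2).

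For full faithfulness, given two flat morphisms $\cG_1, \cG_2 \to T$ and two $T$-morphisms $u_1, u_2 \colon \cG_1 \to \cG_2$, the equalizer subfunctor of $T$ is representable by the complement of the image of the open subspace $\cG_1 \smallsetminus (u_1, u_2)^{-1}(\Delta_{\cG_2/T})$ under the open morphism $\cG_1 \to T$. Hence the diagonals of the $\cH om_\cC$-sheaves are representable by algebraic subspaces, and Theorem~\ref{theorem:descent_along_quotient}(1) yields that $\pi^*$ is fully faithful in both cases $\cC = \cC_1$ and $\cC = \cC_2$.

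For essential surjectivity, given $(f \colon X' \to X, \phi) \in \cC(R, X)^\Sigma$, invoke Lemma~\ref{lemma:square} to equip $R' := R \times_{s, X, f} X'$ with the structure of a groupoid making $(R', X') \to (R, X)$ a cartesian morphism of groupoids. Since $R' \to R$ is a flat base change of $f$, the groupoid $R' \rightrightarrows X'$ is flat and locally of finite presentation, with finite stabilizer $\Sigma_{R'} = \Sigma \times_X X'$. Applying Keel--Mori (Theorem~\ref{theo:quotient_by_ff_groupoid}) produces the uniform categorical and geometric quotient $\pi' \colon X' \to Y' := X'/R'$; the $R'$-invariance of $\pi \circ f$ (which follows from $f \circ s' = s|_{R'}$, $f \circ t' = t|_{R'}$, and $\pi s = \pi t$) yields a unique $g \colon Y' \to Y$ with $\pi \circ f = g \circ \pi'$.

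The remaining step is to show that $g$ is flat (and locally of finite presentation if $\cC = \cC_1$) and that the canonical morphism $h \colon X' \to X \times_Y Y'$ is an isomorphism; granting the latter, flatness (resp. lfp) of $g$ descends from that of $X' \to X$ along the faithfully flat covering $\pi \colon X \to Y$. The principal obstacle is proving that $h$ is an isomorphism without circular dependence on the flatness of $g$. The strategy is to use the effective epimorphism $j_Y \colon R \to X \times_Y X$ provided by the hypothesis complexity $\le 1$, together with the triviality of the $\Sigma$-action, to descend the linearization $\phi$ into a canonical descent datum for $X' \to X$ along $\pi$: the $\Sigma$-trivial cocycle on $\Sigma \times_{(s,t)} R$ exactly expresses the effective-epimorphism compatibility of $\phi$ with $j_Y$, and the cocycle for the resulting morphism on $X \times_Y X$ follows as in the proof of Theorem~\ref{theorem:descent_along_quotient}(2), using that $j_Y \times j_Y$ is finite, surjective, and schematically dominant by flatness of $\pi$. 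Once this descent datum is in place, fpqc (resp. fppf) descent in $\cC$ produces an object $Y'' \in \cC(Y)$ with $\pi^* Y'' \cong X'$ compatibly with the $R$-equivariance; the universal property of Keel--Mori then identifies $Y''$ with $Y' = X'/R'$, yielding the required isomorphism $h$ and completing the proof.
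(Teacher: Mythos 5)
Your argument for full faithfulness does not work. You claim that the diagonals of the sheaves $\cH om_{\cC}(\cF,\cG)$ are representable, with the equalizer subfunctor of $T$ represented by the complement of the image of $\cG_1\setminus(u_1,u_2)^{-1}(\Delta_{\cG_2/T})$. This is a purely topological recipe, but equality of two morphisms after a base change $T'\to T$ is not a topological condition: take $T=\Spec(k[\epsilon]/(\epsilon^2))$, $\cG_1=T$, $\cG_2=\AA^1_T$, and the two sections $u_1=0$, $u_2=\epsilon$; then $(u_1,u_2)^{-1}(\Delta)$ is topologically all of $T$, so your recipe returns $T$, whereas the equalizer subfunctor is $V(\epsilon)\subsetneq T$. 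Moreover $(u_1,u_2)^{-1}(\Delta)$ is not open unless $\cG_2\to T$ is unramified, and for non-proper flat families neither the $\cI som$-sheaves nor the diagonals of the $\cH om$-sheaves are representable in general --- the paper opens its proof of this theorem by noting precisely that the representability hypotheses of Theorem~\ref{theorem:descent_along_quotient} fail here. The correct substitute is elementary: $\Hom_Y(W_1,W_2)$ is an fpqc sheaf, so it suffices to show $\pi_1^*f=\pi_2^*f$ on $X\times_YX$, and this is tested after pulling back along $V_1\times_X R\to V_1\times_X(X\times_Y X)$, which is an epimorphism because it is a flat base change of the \emph{uniform} epimorphism $j_Y$ (Corollary~\ref{coro:uniform_effective_epi}); from $s^*f=t^*f$ one concludes. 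This use of stability of $j_Y$ under flat base change to the total space $V_1$ is the step your proposal is missing.

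The essential surjectivity part is also incomplete at its decisive step. The detour through Keel--Mori ($Y'=X'/R'$, the map $g$, the comparison $h:X'\to X\times_YY'$) does not close the argument --- and is in fact superfluous, since once you have descended $X'\to X$ along $\pi$ to an object of $\cC(Y)$ whose pullback is $R$-equivariantly isomorphic to $X'$, essential surjectivity is already proved. The real content is the descent of the linearization $\phi$ to an isomorphism $\psi:\pi_1^*X'\to\pi_2^*X'$ over $X\times_YX$, and this is exactly where $\cI som$-representability is unavailable; your proposal asserts that ``the $\Sigma$-trivial cocycle expresses the effective-epimorphism compatibility of $\phi$ with $j_Y$'' but gives no mechanism for producing $\psi$. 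The mechanism in the paper is: pull back the exact diagram $\Sigma\times_{(s,t)}R\rightrightarrows R\to X\times_YX$ along the flat morphism $V_1=\pi_1^*X'\to X\times_YX$; it remains exact because $j_Y$ is a \emph{uniform} effective epimorphism (Corollary~\ref{coro:uniform_effective_epi}); the composite $j_Y^*V_1=s^*X'\xrightarrow{\phi}t^*X'\to V_2$ is invariant for the pulled-back relation by the cocycle condition together with triviality of the $\Sigma$-action, hence factors uniquely through $V_1$, yielding $\psi$ (its inverse is obtained symmetrically, and the cocycle for $\psi$ is then checked after the epimorphism $j_Y\times j_Y$, exactly as you indicate). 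Without invoking effectivity of $j_Y$ after flat base change to the total space, the claimed descent datum is unsupported, so as written both halves of your proof have genuine gaps, even though the outline of the second half is close to the paper's actual argument.
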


Recall that an object $(X'\to X)\in \cC(X)$ is equivalent to a flat square
morphism of groupoids $(R',X')\to (R,X)$ (Lemma~\ref{lemma:square}) and when
$(X'\to X)=\pi^*(Y'\to Y)$, then $Y'=X'/R'$. The essentially surjectivity of
$\pi^*$ can thus be rephrased as: the natural morphism $X'\to (X'/R')\times_Y
X$ is an isomorphism, and $X'/R'\to Y$ is flat.

\begin{proof}
Here the conditions on the representability of the diagonal
of $\cC$ fail to hold, so we need different arguments.
In order to prove full faithfulness let $W_1,W_2$ be
objects of $\cC(Y)$ and $(V_1,\phi_1)$, $(V_2,\phi_2)$ the
pullbacks to $X$. We prove bijectivity of the map:
\[
\Hom_{\cC(Y)}(W_1,W_2)\too
\Hom_{\cC(R,X)^{\Sigma}}\bigl((V_1,\phi_1),(V_2,\phi_2)\bigr).
\]
Since $\Hom_Y(W_1,W_2)$ is a sheaf in the fpqc topology (this does
not use flatness of $W_i\to Y$),
injectivity
goes as in~\ref{theorem:descent_along_quotient}. For the
surjectivity part let $f:(V_1,\phi_1)\to(V_2,\phi_2)$
be a morphism, so $s^*f=t^*f$. Since $\Hom_Y(W_1,W_2)$ is a sheaf,
it is enough to prove that
$\pi_1^*f=\pi_2^*f$. We have commutative diagrams:
\[
\xymatrix@C=20mm{
V_1\times_X R \ar[r]^{j^*\pi_i^*f} \ar[d]
& V_2\times_XR \ar[d] \\
V_1\times_X(X\times_YX) \ar[r]^{\pi_i^*f} & V_2\times_X(X\times_YX)}
\]
for $i=1,2$.
From $s^*f=t^*f$ it follows that $j^*\pi_1^*f=j^*\pi_2^*f$.
Since the left vertical map is the pullback of $j$ along the
flat map $V_1\to X$, it is an epimorphism. It then follows that
$\pi_1^*f=\pi_2^*f$.

In order to show essential surjectivity let
$(V,\phi)\in\cC(R,X)^{\Sigma}$. Let $V_i=\pi_i^*V$ be the pullbacks
of $V\to X$ along the projections $\pi_1,\pi_2:X\times_YX\to X$.
When pulling back $j$ along the flat morphism
$h\colon V_1\to X\times_YX$, it remains an effective epimorphism.
\[
\xymatrix{
h^*(\Sigma\times_{(s,t)} R) \ar@<.5ex>[r] \ar@<-.5ex>[r] \ar[d]
\ar@{}[rd]|{\square}
& j^*V_1 \ar[r]  \ar[d] \ar@{}[rd]|{\square} & V_1 \ar[d]^h \\
\Sigma\times_{(s,t)} R \ar@<.5ex>[r] \ar@<-.5ex>[r]
& R \ar[r]^-j & X\times_Y X.}
\]
The morphism
$j^*V_1=s^* V\stackrel{\phi}{\too} t^*V=j^*V_2\too V_2$
is $h^*(\Sigma\times_{(s,t)} R)$-invariant, so by effectivity
we obtain a unique morphism $\psi:V_1\to V_2$.
Similarly we obtain a unique morphism $\chi:V_2\to V_1$.
We claim that $\psi$ and $\chi$
are inverse isomorphisms. Since $j$ is a uniform epimorphism,
in order to prove that the $X\times_Y X$-morphism $\psi\circ \chi$
is the identity it is enough to do it after pullback along $j$.
In this case it is clear since $j^*\psi=\phi$ and $j^*\chi=\phi^{-1}$.
Similarly we prove that $\chi\circ \psi$ is the identity. One
shows as in the end of the proof
of~\ref{theorem:descent_along_quotient} that the isomorphism
$\psi:\pi_1^*V\isomto\pi_2^*V$ is a descent datum for $V$
with respect to $\pi:X\to Y$. The assumptions of the theorem
imply that effective descent along $\pi$ holds in $\cC$ so
$V$ descends to a unique flat morphism $W\to Y$.
\end{proof}

\subsection{Quotient by a subgroupoid} \label{section:quotient_by_gpd}

In this section we come to the quotient question from the introduction,
i.e., the construction of a quotient of a groupoid by a normal
subgroupoid. Let us first review some known cases where
this construction is possible.
\begin{trivlist}
\itemn{1} If $R\rightrightarrows X$ is given by the action of a group space $G$
and $P\rightrightarrows X$ is given by a flat normal subgroup $H$. In this case
the quotient groupoid $Q\rightrightarrows Y$ is the action of $G/H$ on $X/H$.
More generally the quotient exists when $R\rightrightarrows X$ is a local group
action (i.e., it is given by a group action, fppf locally on $X/R$) and $P$
is a flat local normal subgroup action.
\itemn{2} If $R\rightrightarrows X$ is finite locally free and $P$ is a normal
open and closed subgroupoid; this is the Bootstrap Theorem of
\cite[7.8]{KM97}.
\itemn{3} If $P$ is included in the stabilizer; this is the process
of rigidification of~\cite[\S5.1]{ACV03} and~\cite[\S A]{AOV08}.
\end{trivlist}
With suitable flatness assumptions, we shall provide
another case in a different direction: the quotient
exists when $P$ has complexity at most 1.
We emphasize that the existence of the quotients $Y=X/P$ and
$Q=P\backslash R/P$ appearing in the statement is granted
by~\ref{theo:quotient_by_ff_groupoid}.

\begin{theorem} \label{theorem:quotient_by_subgroupoid}
Let $R\rightrightarrows X$ be a flat, locally finitely presented groupoid
of algebraic spaces.
Let $P\rightrightarrows X$ be a flat, locally finitely presented normal
subgroupoid of $R$ with finite stabilizer $\Sigma_P\to X$
and complexity at most $1$. Assume that the quotient
$X\to Y=X/P$ is flat and locally finitely presented. Then there is a quotient
groupoid $Q\rightrightarrows Y$ which is flat and locally finitely presented,
with $Q=P\backslash R/P$. Moreover, the morphisms
$R\to Q$ and $R\times_X R\to Q\times_YQ$ are flat and locally finitely presented.
\end{theorem}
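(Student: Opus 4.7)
The plan is to construct $Q$ and its groupoid structure by two applications of Theorem~\ref{theorem:descent_of_flat}, using the normality of $P$ in $R$ to verify the trivial-$\Sigma_P$-action hypothesis at each descent step.

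\textit{Construction of $Q$, $s_Q$, $t_Q$.} The precomposition $P$-action on $R$ is free, so $R_1:=R/P_{\pre}$ exists as an fppf Keel--Mori quotient and $R\to R_1$ is flat locally finitely presented. The target $t:R\to X$ descends to $\tilde t:R_1\to X$, still flat lfp by fpqc descent on the source. The postcomposition $P$-action commutes with precomposition and descends to a $P$-equivariant structure on $R_1\to X$. For $\sigma\in\Sigma_{P,t(\alpha)}$, normality yields $\sigma\alpha=\alpha\cdot(\alpha^{-1}\sigma\alpha)$ with $\alpha^{-1}\sigma\alpha\in P$, whence $[\sigma\alpha]_1=[\alpha]_1$ in $R_1$; thus $\Sigma_P$ acts trivially on $R_1\to X$. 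Theorem~\ref{theorem:descent_of_flat} (for $\cC_1=\Flat$) then yields a flat lfp algebraic space $Q\to Y$ with $R_1=X\times_Y Q$, so $R\to R_1\to Q$ is flat lfp. The $(P\times P)$-invariant maps $\pi s,\pi t:R\to Y$ descend to $s_Q,t_Q:Q\to Y$.

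\textit{Composition $c_Q$ and flatness of $R\times_X R\to Q\times_Y Q$.} On $R\times_{s,X,t}R$ consider three commuting $P$-actions: postcomposition on $\alpha$ by $\psi_1$, precomposition on $\beta$ by $\mu_2$, and middle action $\phi\cdot(\alpha,\beta):=(\alpha\phi,\phi^{-1}\beta)$. The outer $(P\times P)$-action by $(\psi_1,\mu_2)$ is free, so $B:=(R\times_X R)/(P\times P)_{\text{out}}$ is an fppf Keel--Mori quotient and $R\times_X R\to B$ is flat lfp. The middle action descends to $B$; for $\sigma\in\Sigma_{P,\,s(\alpha)=t(\beta)}$ the identity
\[
(\alpha\sigma,\sigma^{-1}\beta)=(\alpha\sigma\alpha^{-1},\beta^{-1}\sigma^{-1}\beta)\cdot(\alpha,\beta),
\]
where $(\alpha\sigma\alpha^{-1},\beta^{-1}\sigma^{-1}\beta)\in\Stab_{P,t(\alpha)}\times\Stab_{P,s(\beta)}$ by normality, expresses the middle action of $\sigma$ as an outer action, hence trivial on $B$. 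Theorem~\ref{theorem:descent_of_flat} then yields $B=X\times_Y B'$ with $B'=B/P_{\text{mid}}\to Y$ flat lfp. An analogous normality computation realizes the full $Q\times_Y Q$-equivalence on $R\times_X R$ (parameterized by $(\psi_1,\mu_1,\psi_2,\mu_2)\in P^4$ with composability constraint $\mu_1\psi_2\in\Stab_P$) as a middle$+$outer $P^3$-action via $\phi=\mu_1$ and $\mu_2'=\beta^{-1}\mu_1\psi_2\beta\mu_2\in P$; therefore $B'\simeq Q\times_{s_Q,Y,t_Q}Q$, and $R\times_X R\to B\to Q\times_Y Q$ is flat lfp. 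The map $c:R\times_X R\to R$ followed by $R\to Q$ is outer-invariant ($[\psi_1\alpha\beta\mu_2]_Q=[\alpha\beta]_Q$) and middle-invariant ($\alpha\phi\cdot\phi^{-1}\beta=\alpha\beta$), so it descends along this chain to $c_Q:Q\times_Y Q\to Q$.

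\textit{Groupoid axioms and closing.} The identity $e_Q:Y\to Q$ and inversion $i_Q:Q\to Q$ descend from $e:X\to R$ and $i:R\to R$ by analogous $(P\times P)$-invariance arguments; in particular $s_Q=t_Q\circ i_Q$ shows $s_Q$ is flat lfp since $t_Q$ is (the latter being the descent by $\pi$ of the flat lfp map $\tilde t$). Associativity and the remaining axioms, the identification $Q=P\backslash R/P$, and the kernel description then follow by descending the corresponding identities on $R$, using that $R\to Q$ and $R\times_X R\to Q\times_Y Q$ are epimorphisms (being flat surjective). The main technical obstacle is the double verification of the trivial-$\Sigma_P$ condition on $B$ and the identification $B'\simeq Q\times_Y Q$; both depend essentially on normality, which allows middle-action formulas involving a stabilizer element to be rewritten as outer-action formulas.
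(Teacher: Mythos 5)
Your overall architecture is the same as the paper's: form the free one-sided quotients, use normality to check that the residual $P$-action on them has trivial $\Sigma_P$-action, descend along $\pi$ via Theorem~\ref{theorem:descent_of_flat}, and then get the groupoid axioms and the universal property formally from the fact that $R\to Q$ and $R\times_XR\to Q\times_YQ$ are epimorphisms. Your Step~1 is essentially the paper's Step~1 (the paper performs the descent on both sides to get $\bar s$ and $\bar t$ directly, while you do it once and recover flatness of $s_Q$ via the inversion; that variation is fine), and both normality computations you give are correct and are exactly the key observation.

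The genuine gap is the assertion $B'\simeq Q\times_{Y}Q$. The justification you offer --- rewriting the relation $(\alpha,\beta)\sim(\psi_1\alpha\mu_1,\psi_2\beta\mu_2)$ as an outer-plus-middle action using normality --- only compares two equivalence relations on points of $R\times_XR$. But $Q\times_YQ$ is a fiber product of Keel--Mori coarse quotients; it has not been exhibited as the quotient of $R\times_XR$ by that relation in any sheaf-theoretic or categorical sense, so coincidence of ``orbits'' does not identify it with $B'$ (the descent of $B$). Indeed, the possible failure of exactly this kind of identification is what the complexity and flatness hypotheses are there to control, so the step cannot be waved through. What actually closes it are the cartesian squares produced in Step~1: $P\backslash R\simeq X\times_{\pi,Y,\bar s}Q$ and $R/P\simeq X\times_{\pi,Y,\bar t}Q$, whence $B\simeq (P\backslash R)\times_{X}(R/P)\simeq (Q\times_YQ)\times_YX$ with the middle-point map corresponding to the projection to $X$; one must then check that under this isomorphism the descended middle $P$-linearization is the canonical linearization of the pullback, after which uniqueness (full faithfulness) in Theorem~\ref{theorem:descent_of_flat} gives $B'\simeq Q\times_YQ$ compatibly with the maps from $R\times_XR$. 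The paper sidesteps even this by never descending the space: it quotients by post- and then middle-composition, identifies the result with $(Q\times_YQ)\times_YX$ via the Step~1 squares, and descends the composition morphism $c''$ itself. So your route can be repaired, but the normality computation you cite at that point is doing a different job and is not sufficient as written.
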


The rest of the text is devoted to the proof.
We denote by $s,t:R\rightrightarrows X$ and $\sigma,\tau:P\rightrightarrows X$
the source and target maps of the groupoids, and by
$\rho:R\to Q$ and $\pi:X\to Y$ the quotient maps.

\bigskip

\noindent {\bf Step 1.} There exist flat
locally finitely presented maps $\bar s,\bar t:Q\rightrightarrows Y$ and
commutative squares:
\[
\xymatrix@C=10mm{R \ar[r]^-{s,t} \ar[d]_\rho & X \ar[d]^\pi \\
Q \ar@{..>}[r]^-{\bar s,\bar t} & Y}
\]
and $\rho$ is flat.
To prove this we start with the action of $P$ on $R$ by
postcomposition. This action is free so there is a flat,
locally finitely presented quotient morphism
$\rho_{\post}:R\to P\backslash R$ where $P\backslash R$ is
an algebraic space. Since $s:R\to X$ is invariant by the
action of $P$,
there is an induced faithfully flat locally finitely presented morphism
$s':P\backslash R\to X$. The map $R\times_{(s,\sigma)} P\to R$,
$(\alpha,\varphi)\mapsto\alpha\varphi^{-1}$ is equivariant for
the action of $P$ on the $R$-factors by postcomposition.
Using that the formation of the quotient
$\rho_{\post}:R\to P\backslash R$ commutes with the flat base
change $\sigma:P\to X$, we deduce that there is an induced map
$(P\backslash R)\times_{(s',\sigma)} P\to P\backslash R$.
In this way we obtain a $P$-linearization on the $X$-object
$P\backslash R$, as follows:
\[
\setlength{\arraycolsep}{1mm}
\begin{array}{rcl}
\sigma^*(P\backslash R)=(P\backslash R)\times_{(s',\sigma)}P
 & \isomto
& \tau^*(P\backslash R)=(P\backslash R)\times_{(s',\tau)}P . \\
(\alpha,\varphi) & \longmapsto & (\alpha\varphi^{-1},\varphi)
\end{array}
\]
We claim that because~$P$ is normal, the restriction of this
$P$-linearization to the stabilizer $\Sigma_P$ is trivial. In
order to check this, we take advantage of the fact that
the space $P\backslash R$ is equal to the fppf quotient sheaf
so locally $(P\backslash R)(T)=P(T)\backslash R(T)$. If
$\varphi\in\Sigma_P(T)$ and $\alpha\in R(T)$, we have
$\psi:=\alpha\varphi^{-1}\alpha^{-1}\in\Sigma_P(T)$ and hence
$\alpha\varphi^{-1}=\psi\alpha$ in $R(T)$ which is equal to $\alpha$
in $(P\backslash R)(T)$. This proves our claim.
It follows from case (1) of
Theorem~\ref{theorem:descent_of_flat} that
$s':P\backslash R\to X$ descends to a faithfully flat
locally finitely presented map $\bar s:Q_1\to Y$.

Similarly, considering the action of $P$ on $R$ by precomposition,
we obtain a flat, locally finitely presented quotient morphism
$\rho_{\pre}:R\to R/P$, and a flat locally finitely presented
morphism $t':R/P\to X$ induced by~$t$. The latter supports a
$P$-linearization with trivial stabilizer action and descends
to a faithfully flat locally finitely presented map $\bar t:Q_2\to Y$.

Since the formation of the quotient $X\to Y$ commutes with
flat base change, we see that $Q_1$ is the quotient of
$P\backslash R$ by $P$ acting by postcomposition and that $Q_2$
is the quotient of $R/P$ by $P$ acting by precomposition.
Both quotients are isomorphic since they enjoy the same
universal property as $Q=P\backslash R/P$. So $Q=Q_1=Q_2$
canonically and we obtain maps $\bar s,\bar t:Q\rightrightarrows Y$.
In this way we obtain also that $\rho:R\to Q$ is flat,
being the composition of the flat map
$\rho_{\post}:R\to P\backslash R$ and of the morphism
$P\backslash R\to Q$ which is a base change of the flat map
$X\to Y$. We have thus produced the commutative diagrams
\[
\xymatrix{
R\ar[r]^-{\rho_{\post}}\ar[rd]_{\rho}
  & P\backslash R \ar[r]^-{s'} \ar[d]^{\pi\smash{'}} \ar@{}[rd]|{\square}
  & X \ar[d]^{\pi} \\
  & Q \ar[r]_{\bar s} & Y}\quad\quad
\xymatrix{
R\ar[r]^-{\rho_{\pre}}\ar[rd]_{\rho}
  & R/P \ar[r]^-{t'} \ar[d]^{\pi\smash{'}} \ar@{}[rd]|{\square}
  & X \ar[d]^{\pi} \\
  & Q \ar[r]_{\bar t} & Y}
\]
in which all maps are flat.

\bigskip

\noindent {\bf Step 2.} There exists a flat locally finitely
presented map $\bar c:Q\times_Y Q\to Q$ and a commutative
square:
\[
\xymatrix@C=10mm{R\times_X R \ar[r]^-c
\ar[d]_{\rho\times\rho} & R \ar[d]^\rho \\
Q\times_Y Q \ar@{..>}[r]^-{\bar c} & Q}
\]
where $\rho\times \rho$ is flat.
To prove this, note that there are three commuting actions of $R$ on $R\times_X
R$: pre-composition $(\alpha,\beta,\gamma): (\alpha,\beta)\to
(\alpha,\beta\gamma)$, post-composition $(\gamma,\alpha,\beta):
(\alpha,\beta)\to (\gamma\alpha,\beta)$ and middle-composition
$(\alpha,\gamma,\beta):(\alpha,\beta)\to (\alpha\gamma,\gamma^{-1}\beta)$.
The joint action of any two of these are free. The composition $c$ is
equivariant with pre- and post-composition and invariant under
middle-composition.

Taking the quotient by post-composition under $P$, we obtain a flat morphism
$c':(P\backslash R)\times_{(s',t)} R\to P\backslash R$. Since $s':P\backslash
R\to X$ is the pull-back of $\bar{s}:Q\to Y$, we can identify the source of
$c'$ with $Q\times_{\bar{s},\pi t} R$. Middle-composition then becomes
post-composition on the second factor so $c'$ factors into two flat maps
\[
\xymatrix{Q\times_{\bar{s},\pi t} R\ar@/^1.0pc/[rr]^{c'}\ar[r]
 & Q\times_{\bar{s},\bar{t}\pi'} P\backslash R\ar[r]_-{c''}
 & P\backslash R.}
\]
The map $c''$ fits into the diagram
\[
\xymatrix{Q\times_{\bar{s},\bar{t}\pi'} P\backslash R\ar@/^1.4pc/[rr]^{s'\pr_2}\ar[r]^-{c''}\ar[d]
 & P\backslash R\ar[r]^{s'}\ar[d]\ar@{}[rd]|{\square} & X\ar[d]^\pi\\
Q\times_Y Q\ar@/_1.0pc/[rr]_{\bar{s}\pr_2}\ar@{..>}[r]^{\bar{c}}
 & Q\ar[r]^-{\bar{s}} & Y}
\]
where the outer square also is cartesian, so $c''$ descends to a flat map
$\bar{c}$ as indicated in the diagram (Theorem \ref{theorem:descent_of_flat}).
The map $\rho\times\rho:R\times_X R\to Q\times_Y Q$ is flat, being the
composition of the flat map $R\times_X R\to Q\times_{\bar{s},\bar{t}\pi'}
P\backslash R$ (quotient map of the free middle-post-composition) and the
pull-back of the flat map $\pi$.

\bigskip

\noindent {\bf Step 3.} Conclusion. 
It is easy to construct the maps $\bar e:Y\to Q$ and
$\bar\imath:Q\to Q$ fitting in commutative diagrams:
\[
\xymatrix@C=10mm{X \ar[r]^e \ar[d]_\pi & R \ar[d]^\rho \\
Y \ar@{..>}[r]^-{\bar e} & Q}
\qquad\qquad
\xymatrix@C=10mm{R \ar[r]^i \ar[d]_\rho & R \ar[d]^\rho \\
Q \ar@{..>}[r]^-{\bar\imath} & Q.}
\]
We skip the details.
From the fact that $\rho:R\to Q$ and
$\rho\times\rho:R\times_X R\to Q\times_Y Q$ are epimorphisms
of algebraic spaces, it follows formally that the maps
$\bar s,\bar t,\bar c,\bar e,\bar\imath$ are
unique, that they give $Q\rightrightarrows Y$ the structure of a groupoid,
and that the map $\rho:R\to Q$ is a morphism of groupoids.
Finally we can prove that the groupoid
$Q\rightrightarrows Y$ is a quotient of $R\rightrightarrows X$ by $P$.
Let $f:(R,X) \to (R',X')$ be a morphism of groupoids such that
$P\subset \ker(f)$. Then the map $f:R\to R'$ is invariant by the
pre-post-composition of~$P$ on $R$, hence it factors through
a map $Q\to R'$. Similarly the map $f:X\to X'$ is invariant by the
action of $P$ hence it factors through a map $Y\to X'$.
That $(Q,Y)\to (R',X')$ is a morphism of groupoids follows
again from the fact that $\rho$ and $\rho\times\rho$ are
epimorphisms.

\subsection{Stacky interpretations}
Let $R\rightrightarrows X$ be a flat locally finitely presented groupoid in
algebraic spaces and let $\cC\to\AlgSp$ be a stack for the fppf topology. Then
the category $\cC(R,X)$ of $R$-equivariant object is equivalent with the
category of morphisms $[X/R]\to \cC$ between stacks. A morphism
$\varphi:[X/R]\to \cC$ corresponds to an object with trivial $\Sigma$-action if
and only if the following equivalent conditions hold
\begin{trivlist}
\itemn{1} For every algebraic space $T$, object $x\in [X/R](T)$, and
automorphism $\tau\in \Aut(x)$, the image $\varphi(\tau)$ is the identity
on $\varphi(x)$.
\itemn{2} The induced morphism of inertia stacks $I\varphi: I[X/R]\to I\cC$ is
trivial, i.e., factors through $\cC$.
\itemn{3} The morphism $\varphi$ factors, up to equivalence, through
the fppf-sheafification $[X/R]\to \pi_0([X/R])$.
\end{trivlist}
If $R\rightrightarrows X$ has finite inertia, then the coarse space $[X/R]\to
X/R$ factors through the fppf sheaf quotient $\pi_0[X/R]=(X/R)_{\fppf}$ and 
$\pi_0[X/R]\to X/R$ is
an isomorphism if the action is free.
Theorem~\ref{theorem:descent_along_quotient} thus says that the functor
\[
\Hom(X/R,\cC)\to \Hom(\pi_0[X/R],\cC)
\]
is an equivalence of categories if $R\rightrightarrows X$ has complexity
at most 1 and under certain assumptions on $\cC$, e.g., if
$\cC$ is a stack in groupoids with representable diagonal.

In the setting of Theorem~\ref{theorem:descent_of_flat}, the category
$\cC(R,X)$ is equivalent to the category of flat morphisms of algebraic
stacks $\cX'\to \cX=[X/R]$ that are representable by algebraic spaces.
The subcategory $\cC(R,X)^\Sigma$ consists of stabilizer-preserving
morphisms, i.e., those such that the induced morphism of inertia stacks
$I\cX'\to (I\cX)\times_{\cX} \cX'$ is an
isomorphism. Theorem~\ref{theorem:descent_of_flat} thus says that the category
of flat morphisms $Y'\to Y=X/R$ is equivalent to the category of flat
stabilizer-preserving representable morphisms of algebraic stacks $\cX'\to
\cX$.

\begin{remark}
It can be proved that case (2) of Theorem~\ref{theorem:descent_of_flat} holds
for arbitrary flat morphisms $X'\to X$. Indeed, let $\cX'\to
\cX$ be the corresponding stabilizer-preserving representable morphism of algebraic stacks. Then $\cX'$ also has finite stabilizer and a coarse moduli space
$Y'=X'/R'$. It is enough to show that the diagram
\[
\xymatrix{\cX'\ar[r]\ar[d] & \cX\ar[d]\\
Y'\ar[r] & Y}
\]
is cartesian. This can be checked \'etale-locally on $Y'$ and $Y$, so we can
assume that $Y$ and $Y'$ are affine. After further \'etale localization on $Y$,
we can assume that $\cX=[X/R]$ where $X\to \cX$ is finite: this follows from
the proof of the Keel--Mori theorem~\cite[Thm.~6.12]{Ry13}. Since $\cX'\to \cX$
is representable, we obtain a finite presentation $X'\to \cX'$ where
$X'=X\times_{\cX} \cX'$. It follows that $X'$ and $X$ are affine
since $X'\to Y'$ and $X\to Y$ are affine~\cite[Thm.~5.3]{Ry13}. Thus $X'\to X$
is affine and case (2) of Theorem~\ref{theorem:descent_of_flat} applies.
\end{remark}

Finally, Theorem~\ref{theorem:quotient_by_subgroupoid} can be described as
follows using stacks. We have a locally finitely presented flat morphism
$[X/P]\to [X/R]$. This gives rise to a groupoid
\[
\xymatrix{%
[X/P]\times_{[X/R]} [X/P] \ar@<.5ex>[r] \ar@<-.5ex>[r] & [X/P]
}%
\]
with quotient $[X/R]$. That $P\subset R$ is a normal subgroupoid implies that the
morphisms of the groupoid above are stabilizer-preserving. We can also make the
identification $[X/P]\times_{[X/R]} [X/P]=[P\backslash R/P]$. By
Theorem~\ref{theorem:descent_of_flat}, we thus obtain a cartesian diagram
\[
\xymatrix{%
[P\backslash R/P] \ar@<.5ex>[r] \ar@<-.5ex>[r] \ar[d] \ar@{}[rd]|{\square}
  & [X/P]\ar[r]\ar[d]\ar@{}[rd]|{\square} & [X/R] \ar[d] \\
\llap{Q=}P\backslash R/P \ar@<.5ex>[r] \ar@<-.5ex>[r] & X/P\ar[r] & [(X/P)/Q]
}
\]
where the horizontal morphisms are flat and locally of finite
presentation and the vertical morphisms are (relative) coarse moduli spaces.

\subsection{A non-flat counter-example}\label{non-flat-example}
We give an example that shows that
Theorems~\ref{theorem:descent_along_quotient} and~\ref{theorem:descent_of_flat}
do not hold when $\pi:X\to Y$ is not flat. The counter-example satisfies
\begin{trivlist}
\itemn{1} $X$ is an affine $1$-dimensional scheme in characteristic $p$ with an
  action of $G=\ZZ/p\ZZ$ but $\pi:X\to Y=X/G$ is not flat.
\itemn{2} There is a torsion equivariant line bundle $\sL\in
  \Pic^G_p(X)^{\Sigma}$ that does not come from $\Pic(Y)$. In particular,
  Theorem~\ref{theorem:descent_along_quotient} fails for the algebraic stacks
  $\cC=\Pic=B\GG_m$ and $\cC=\Pic_p=B\mu_p$.
\itemn{3} There is a smooth morphism $X'\to X$ that is not the pull-back
  of a smooth morphism $Y'\to Y$. In particular,
  Theorem~\ref{theorem:descent_of_flat} fails even for smooth morphisms.
\end{trivlist}

Let $k$ be a field of characteristic $p$. Let $X=\Spec k[\epsilon,x]/(\epsilon^2)$ and let $\ZZ/p\ZZ$ act via
$(t,\epsilon,x)=(\epsilon,x+t\epsilon)$. Then $Y=\Spec k[\epsilon,x^p,\epsilon
  x, \epsilon x^2,\dots,\epsilon x^{p-1}]$.

Consider the following $\ZZ/p\ZZ$-equivariant line bundle $\sL$ on $X$: as a
line bundle it is trivial $\sL=\sO_X\cdot e$ and it has the action
$(t,e)=(1+t\epsilon)e$.

The stabilizer acts trivially on this line bundle. Indeed, the stabilizer
$\Sigma$ of $X$ is given by the closed subscheme $t\epsilon=0$ of
$(\ZZ/p\ZZ)\times X = \Spec k[t,\epsilon,x]/(t^p-t,\epsilon^2)$.

The line bundle is not in the image of $\pi^*:\QCoh(Y)\to \QCoh^G(X)$. Indeed,
since $\pi^*$ has the right adjoint $(\pi_*-)^G$, it is enough to verify that
the counit $\pi^*(\pi_*\sL)^G\to \sL$ is not an isomorphism. But an easy
calculation gives that $\pi^*(\pi_*\sL)^G = (\epsilon)\cdot \sL\subsetneq \sL$.

In terms of algebraic stacks, the line bundle $\sL$ corresponds to the morphism
\[
[X/(\ZZ/p\ZZ)]\to B(\ZZ/p\ZZ)_S\xrightarrow{B\varphi} B\mu_p\to
B\GG_m,
\]
where $S=\Spec k[\epsilon]/(\epsilon^2)$ and $\varphi\colon
(\ZZ/p\ZZ)_S\to \mu_p$ is the group homomorphism given by $t\mapsto
(1+\epsilon)^t=1+t\epsilon$. Here the map between inertia stacks
$I[X/(\ZZ/p\ZZ)]\to IB(\ZZ/p\ZZ)_S\to IB\mu_p$ is induced by
\[
\begin{array}{ccccc}
k[\lambda]/(\lambda^p-1) &\longrightarrow& k[\epsilon,t]/(\epsilon^2,t^p-t) &\longrightarrow& k[\epsilon,x,t]/(\epsilon^2, t^p-t,t\epsilon)\\
\lambda &\longmapsto& 1+t\epsilon &\longmapsto& 1
\end{array}
\]
so it factors through $B\mu_p$.

The line bundle corresponds to the smooth stabilizer-preserving $G$-equivariant
morphism $X'=\Spec k[\epsilon,x,y]/(\epsilon^2)$ where the $G$-action is
$(t,\epsilon,x,y)\mapsto (\epsilon,x+t\epsilon,y+t\epsilon y)$. This is not the
pull-back of the morphism $Y'=X'/G\to Y=X/G$. Indeed, a similar calculation as
for the line bundle gives that $Y'=\Spec k[\epsilon,x^p,y^p,\epsilon x^iy^j]$.
%

\bigskip

\noindent
Matthieu Romagny, \url{matthieu.romagny@univ-rennes1.fr} \\
IRMAR, Universit\'e Rennes 1, Campus de Beaulieu, 35042 Rennes Cedex, France

\bigskip

\noindent
David Rydh, \url{dary@math.kth.se} \\
KTH Royal Institute of Technology, Department of Mathematics, 10044 Stockholm, Sweden

\bigskip

\noindent
Gabriel Zalamansky, \url{g.s.zalamansky@umail.leidenuniv.nl} \\
Universiteit Leiden, Snellius Building, Niels Bohrweg 1, 2333 CA Leiden, The Netherlands

\end{document}